\title{Lectures on Klein surfaces and their fundamental group}
\author{Florent Schaffhauser}
\address{Departamento de Matem\'aticas, Universidad de Los Andes, Bogot\'a, Colombia.}
\email{florent@uniandes.edu.co}
\newcommand{\GL}{\mathbf{GL}}
\newcommand{\PGL}{\mathbf{PGL}}
\newcommand{\U}{\mathbf{U}}
\newcommand{\PSL}{\mathbf{PSL}}
\newcommand{\bE}{\mathbf{E}}
\newcommand{\R}{\mathbb{R}}
\newcommand{\C}{\mathbb{C}}
\newcommand{\Z}{\mathbb{Z}}
\newcommand{\cH}{\mathcal{H}}
\newcommand{\Hom}{\mathrm{Hom}}
\newcommand{\Aut}{\mathrm{Aut}}
\newcommand{\Id}{\mathrm{Id}}
\newcommand{\Out}{\mathrm{Out}}
\newcommand{\Inn}{\mathrm{Inn}}
\newcommand{\Cov}{\mathrm{Cov}}
\newcommand{\Fib}{\mathrm{Fib}}
\newcommand{\Sets}{\mathrm{Sets}}
\newcommand{\Spm}{\mathrm{Spm}\,}
\renewcommand{\phi}{\varphi}
\renewcommand{\Im}{\mathrm{Im}\,}
\renewcommand{\mod}[1]{\ (\mathrm{mod}\,#1)}
\newcommand{\lra}{\longrightarrow}
\newcommand{\lmt}{\longmapsto}
\newcommand{\RP}{\R\mathbf{P}}
\newcommand{\CP}{\C\mathbf{P}}
\newcommand{\Ga}{\Gamma}
\newcommand{\Si}{\Sigma}
\newcommand{\si}{\sigma}
\newcommand{\piR}{\pi_1^{\R}((X,\si);x)}
\newcommand{\piC}{\pi_1(X;x)}
\newcommand{\piRp}{\pi_1^{\R}((X',\si');x')}
\newcommand{\piCp}{\pi_1(X';x')}
\newcommand{\Xt}{\widetilde{X}}
\newcommand{\op}{\mathrm{op}}
\newcommand{\HomR}{\Hom_{\Z/2\Z}(\piR;\GL^\pm(V))}
\newcommand{\HomC}{\Hom(\piC;\GL(V))}
\newcommand{\HomRunit}{\Hom_{\Z/2\Z}(\piR;\U^\pm(V,h))}
\newcommand{ \HomRunits}{\Hom_{\Z/2\Z}(\piR;\U^\pm(V))}
\newcommand{\HomCunit}{\Hom(\piC;\U(V))}
\newcommand{\sit}{\widetilde{\si}}
\newcommand{\cR}{\mathcal{R}}
\newcommand{\cRsialpha}{\mathcal{R}(\si,\alpha)}
\newcommand{\HomRunitspsd}{\Hom_{\Z/2\Z}(\piR;\U(V)\rtimes_{\alpha}\Z/2\Z)}
\newcommand{\Fix}{\mathrm{Fix}}
\newcommand{\bs}{\backslash}
\newcommand{\ga}{\gamma}
\newcommand{\gat}{\widetilde{\ga}}
\newcommand{\xt}{\widetilde{x}}
\newcommand{\fibre}{p^{-1}(\{x\})}
\newcommand{\fibreq}{q^{-1}(\{x\})}
\newcommand{\car}{\curvearrowright}
\newcommand{\cal}{\curvearrowleft}
\newcommand{\fibrepsi}{p^{-1}(\{\si(x)\})}
\newcommand{\eps}{\varepsilon}
\newcommand{\etat}{\widetilde{\eta}}
\newcommand{\Sit}{\widetilde{\Si}}
\newcommand{\Ad}[1]{\mathrm{Ad}_{#1}}
\newcommand{\ov}[1]{\overline{#1}}
\newtheorem{theorem}{Theorem}[section]
\newtheorem{proposition}[theorem]{Proposition}
\newtheorem{lemma}[theorem]{Lemma}
\newtheorem{corollary}[theorem]{Corollary}
\theoremstyle{definition}
\newtheorem{definition}[theorem]{Definition}
\newtheorem{remark}[theorem]{Remark}
\newtheorem{example}[theorem]{Example}
\newtheorem*{acknowledgments}{Acknowledgments}
\newtheoremstyle{exercise}{\topsep}{5pt}%
     {}
     {}
     {\scshape}
     {.}
     {\topsep}
     {\thmname{#1}\thmnumber{\,#2}\thmnote{\,(#3)}}
\numberwithin{equation}{section}
\theoremstyle{exercise}
\newtheorem{exercise}{Exercise}[section]
\keywords{Klein surfaces, Fundamental groups}
\begin{document}

\begin{abstract}
A Klein surface may be seen as a Riemann surface $X$ endowed with an anti-holomorphic involution $\si$. The fundamental group of the Klein surface $(X,\si)$ is the orbifold fundamental group of $[X/\si]$, the quotient orbifold for the $\Z/2\Z$-action on $X$ defined by $\si$, and its most basic yet most important property is that it contains the topological fundamental group of $X$ as an index two subgroup. The goal of these lectures is to give an introduction to the study of the fundamental group of a Klein surface. We start by reviewing the topological classification of Klein surfaces and by explaining the relation with real algebraic curves. Then we introduce the fundamental group of a Klein surface and present its main basic properties. Finally, we study the variety of unitary representations of this group and relate it to the representation variety of the topological fundamental group of $X$.
\end{abstract}

\maketitle

\tableofcontents

These notes are based on a series of three 1-hour lectures given in 2012 at the CRM in Barcelona, as part of the event \textit{Master Class and Workshop on Representations of Surface Groups}, itself a part of the research program \textit{Geometry and Quantization of Moduli Spaces}. The goal of the lectures was to give an introduction to the general theory of Klein surfaces, particularly the appropriate notion of fundamental groups for such surfaces, emphasizing throughout the analogy with a more algebraic perspective on fundamental groups in real algebraic geometry. Indeed, the topological fundamental group of a space $X$ is usually defined in terms of paths in $X$, as the set of homotopy classes of loops at given base point $x\in X$. Then, under mild topological assumptions on $X$, the universal cover $(\Xt,\xt)$ of $(X,x)$ is constructed and it is shown that the fundamental group of $X$ at $x$ is isomorphic to the automorphism group of this universal cover and that intermediate covers correspond to subgroups of the fundamental group. In \textit{SGA 1} (reprinted in \cite{SGA1}), Grothendieck used this last property of the fundamental group, that it classifies covers of a connected space $X$, as a definition for the algebraic fundamental group of a connected scheme over a field: the algebraic fundamental group is, in this case, a Galois group for an appropriate category of covers of $X$, and it is an augmentation of the absolute Galois group of the base field. He also showed that, in case $X$ is a complex algebraic variety, its algebraic fundamental group is the profinite completion of the topological fundamental group of the associated complex analytic space. Reversing the perspective, one could say that the topological fundamental group of a Riemann surface is the discrete analogue of the algebraic fundamental group of a complex algebraic curve. From that point of view, the goal of these notes is to explain what group is the discrete analogue of the algebraic fundamental group when the complex curve is actually defined over the reals.

The complex analytic way to think about a real algebraic curve is as a pair $(X,\si)$ where $X$ is a Riemann surface and $\si$ is an anti-holomorphic involution of $X$. Following the properties of the algebraic fundamental group exposed in \textit{SGA 1}, the discrete fundamental group of $(X,\si)$ should be a group $\piR$ fitting in the following short exact sequence $$1\lra \piC \lra \piR \lra \mathrm{Gal}(\C/\R) \lra 1,$$ where $\piC$ is the topological fundamental group of $X$. The exact sequence above says that $\piR$ is in fact the orbifold fundamental group of $X/\si$, first defined in terms of orbifold paths by Thurston (see \cite{Scott} or \cite{Ratcliffe} for a presentation of the general theory of orbifolds and Thurston's definition). In these lectures, we give a definition of $\piR$ in terms of covering spaces of the real curve $(X,\si)$.

The first section gives a brief account of the general theory of Real Riemann surfaces and Klein surfaces, including the topological classification of compact, connected, Real Riemann surfaces. In the second section, we define the fundamental group of a Klein surface and show how it classifies real covering spaces of $(X,\si)$. And, in the third and final section, we study linear and unitary representations of the fundamental group of a Klein surface and show how the unitary representation variety of the orbifold fundamental group $\piR$ maps to the unitary representation of the topological fundamental group $\piC$, with its image contained in the fixed-point set of an involution.

\begin{acknowledgments}
It is a pleasure to thank all the participants for their kind attention and stimulating questions. Special thanks go to workshop organizers Steven Bradlow and \'Oscar Garc\'ia Prada, program organizers Luis \'Alvarez-C\'onsul and Ignasi Mundet i Riera, and speakers and participants Olivier Guichard, Jacques Hurtubise, Andr\'es Jaramillo Puentes,  Melissa Liu, Gregor Masbaum, Toni Pantev and Richard Wentworth.
\end{acknowledgments}

\section{Klein surfaces and real algebraic curves}

In this section, we review well-known material from the theory of Klein surfaces. More complete references are provided by the survey papers of Natanzon (\cite{Natanzon_survey}) and Huisman (\cite{Huisman_survey}), and by the book of Alling and Greenleaf (\cite{AG}).

\subsection{Algebraic curves and two-dimensional manifolds}

It is a very familiar idea, since the work of Riemann, that to a non-singular complex algebraic curve there is associated an orientable (in fact, oriented) real surface, i.e.\ a two-dimensional manifold. Conversely, any \textit{compact}, connected, orientable surface admits a structure of complex analytic manifold of dimension one (i.e.\ a Riemann surface structure), with respect to which it embeds onto a complex submanifold of $\CP^3$. By a theorem of Chow, the image of such an embedding is algebraic. It is the goal of this subsection to briefly recall that real algebraic curves also have associated surfaces, whose topology and geometry encode certain algebraic properties of the curve.

We begin with a simple example. Let $P\in\R[x,y]$ be a non-constant polynomial in two variables with real coefficients. Of course, $P$ may also be seen as an element of $\C[x,y]$. We set $$X_P(\C) := \{(u,v)\in \C^2\ |\ P(u,v)=0\}$$ and $$X_P(\R):=\{(u,v)\in \R^2\ |\ P(u,v) =0\}.$$ $X_P(\R)$ is a real plane curve and $X_P(\C)$ is a complex plane curve. The complex affine plane $\C^2$ is endowed with a \textit{real structure} (=anti-holomorphic involution) $$\si: \begin{array}{ccc} \C^2 & \lra & \C^2 \\ (u,v) & \lmt & (\ov{u},\ov{v}) \end{array}$$ and the real affine plane $\R^2\subset \C^2$ is equal to $\Fix(\si)$. Since $P$ has real coefficients, $X_P(\C)$ is $\si$-invariant and one has $$X_P(\R) = \Fix(\si|_{X_P(\C)}).$$ In other words, the real solutions to the equation $P=0$ are exactly the $\si$-invariant complex solutions. Note that $X_P(\R)$ may be empty, while, as a consequence of Hilbert's Nullstellensatz,  $X_P(\C)$ is always non-empty, the point being that there is an immediate gain in considering the pair $(X_P(\C),\si)$ in place of $X_P(\R)$~: one is certain never to be talking about the empty set. But there is more underlying structure to this story. Assume that the partial derivatives $\frac{\partial P}{\partial u}$ and $\frac{\partial P}{\partial v}$ do not simultaneously vanish at points of $X_P(\C)$. Then $X_P(\C)$, in the topology induced by the usual topology of $\C^2$, admits a structure of Riemann surface (=a holomorphic atlas locally modeled on open sets of $\C$; see for instance \cite{Donaldson_RS}), and the real structure $\si:(u,v) \lmt (\ov{u},\ov{v})$ of $\C^2$ induces an anti-holomorphic involution of $X_P(\C)$, whose fixed-point set is $X_P(\R)$. The previous example motivates the following definition.

\begin{definition}[Real Riemann surface]\label{def:real_curve}
A \textbf{Real Riemann surface} is a pair $(\Si,\tau)$ where $\Si$ is a Riemann surface and $\tau$ is an anti-holomorphic involution of $\Si$~:
\begin{itemize}
\item $\forall x\in\Si$, the tangent map $T_x\tau: T_x\Si \lra T_{\tau(x)}\Si$ is a $\C$-anti-linear map (in particular $\tau\neq\Id_\Si$).
\item $\tau^2=\Id_\Si$.
\end{itemize}
\noindent We shall call such a $\tau$ a \textbf{real structure} on $\Si$.

A \textbf{homomorphism of Real Riemann surfaces} $$f:(\Si,\tau) \lra (\Si',\tau')$$ is a holomorphic map $f:\Si\lra \Si'$ such that $f\circ\tau = \tau'\circ f$.
\end{definition}

A Real Riemann surface $(\Si,\tau)$ is called connected (resp. compact) if $\Si$ is connected (resp. compact). A Real Riemann surface is an example of a \textit{Real space} in the sense of Atiyah (\cite{Atiyah_reality}). That is why we write the word \textit{Real} with a capital \textit{R}, following Atiyah's convention. We will often say real algebraic curve or real curve in place of Real Riemann surface.

The next example shows that, given a real curve $(\Si,\tau)$, the quotient surface $\Si/\tau$ has an algebraic significance (we denote by $\Si/\tau$ the quotient of $\Si$ by the $\Z/2\Z$-action defined by $\tau$). Let $V_\R := \Spm\R[T]$ be the set of maximal ideals of the ring $\R[T]$, and denote by $V_\C:= \Spm\C[T]$.  Maximal ideals of $\C[T]$ (resp. $\R[T]$) are those generated by irreducible polynomials, so $$V_\C=\{(T-z) : z\in\C\}$$ and $$V_\R = \{(T-a) : a \in\R\} \cup \{(T^2+bT+c) : b,c\in\R\ |\ b^2-4c<0\}.$$ In particular, there is a bijection $V_\C\simeq \C$ and the real structure $\si:z\lmt \ov{z}$ of $\C$ induces an involution $(T-z)\lmt (T-\ov{z})$ of $V_\C$, whose fixed-point set is $$\{(T-a) : a\in\R\}\simeq \R = \Fix(\si) \subset \C.$$ If $\Im{z}\neq 0$, the set $\{z;\ov{z}\}$ is the set of roots of the real polynomial $T^2-(z+\ov{z})T+z\ov{z}$, whose discriminant is $$b^2-4c = (z+\ov{z})^2 - 4|z|^2 = (z-\ov{z})^2 = -4(\Im{z})^2 <0.$$ Conversely, a real polynomial of the form $T^2+bT+c$ satisfying $b^2-4c<0$ has a unique complex root $z$ such that $\Im{z}>0$, the other root being $\ov{z}$. In other words, there is a bijection $$V_\R\simeq \big\{\mathrm{orbits\ of}\ \{\Id_\C;\si\}\simeq\mathrm{Gal}(\C/\R)\ \mathrm{in}\ V_\C\big\}=V_\C/\si.$$ In particular, as a set, $$V_\C/\si= \C/\si \simeq \{z\in\C\,\, |\ \Im{z}\geq 0\}=:\C_+$$ is the "closed" upper half-plane. Thus, the algebraic object $V_\R=\Spm\R[T]$ is naturally in bijection with a surface with boundary, namely the closed upper half-plane $\C/\si$. This suggests considering general quotients of the form $\Si/\tau$, where $(\Si,\tau)$ is a Real Riemann surface in the sense of Definition \ref{def:real_curve}. 

By construction, $\Si/\tau$ is a two-dimensional manifold with boundary. It may be non-orientable and the boundary may be empty. The first thing to observe is that there is a good notion of function on $\Si/\tau$. Indeed, if we denote by $$p:\Si\lra \Si/\tau$$ the canonical projection and $O_\Si$ the sheaf of holomorphic functions on $\Si$, then the group $\Z/2\Z=\{\Id_\Si;\tau\}$ acts on the direct image $p_*O_\Si$ by \begin{equation}\label{real_holomorphic_functions} (\tau\cdot f)(x) := \ov{f\big(\tau(x)\big)}.\end{equation} This is well-defined because a local section $f$ of $p_*O_\Si$ is, by definition, a holomorphic function defined on a $\tau$-invariant open subset of $\Si$, and then so is $\tau\cdot f$ defined as above. Let us denote by $(p_*O_\Si)^\tau$ the subsheaf of $\tau$-invariant local sections of $p_*O_\Si$. Then, any local section of $(p_*O_\Si)^\tau$ defines a continuous complex-valued function on $\Si/\tau$~: \begin{equation}\label{functions_on_Klein_surface} \widehat{f}: \begin{array}{ccc} V:= U/\tau & \lra & \C \\ \left[z\right] & \lmt & \psi\circ f(z) \end{array}\end{equation} where $U$ is a $\tau$-invariant open subset of $\Si$ and $\psi$ is the \textit{folding map} \begin{equation}\label{folding_map}\psi:\begin{array}{ccc} \C & \lra & \C_+ \\ x+iy & \lmt & x+i|y| \end{array}\end{equation} (we recall that we identify $\C/\si$ with the closed upper half-plane $\{z\in\C\ |\ \Im{z}\geq 0\}\subset \C$). It is easy to check that \eqref{functions_on_Klein_surface} is well-defined on $U/\tau$ (for, if $f\in (p_*O_\Si)^\tau$, then $f(\tau(z)) = \ov{f(z)}$, so $\psi\circ f(\tau(z))= \psi\circ f(z)$).

\begin{proposition}
Given a Real Riemann surface $(\Si,\tau)$, the canonical projection $p:\Si \lra \Si/\tau$ induces a diffeomorphism between $\Fix(\tau)\subset \Si$ and the boundary of $\Si/\tau$. Any local section of $(p_*O_\Si)^\tau$ is $\R$-valued on $\partial(\Si/\tau)$ and $(p_*O_\Si)^\tau$ is a subsheaf of the sheaf of continuous functions on $\Si/\tau$.
\end{proposition}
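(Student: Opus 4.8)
The plan is to reduce all three assertions to a local normal form for $\tau$ near a point of $\Fix(\tau)$, and then to read them off from that model together with the behaviour of $p$ away from $\Fix(\tau)$. First, the \emph{local model}: I would prove that every $x_0\in\Fix(\tau)$ has a $\tau$-invariant open neighbourhood $U$ carrying a holomorphic coordinate $z\colon U\xrightarrow{\sim}\Delta$ onto a disc $\Delta\subset\C$, with $z(x_0)=0$ and $z\circ\tau=\overline z$. To build it, take any holomorphic chart $w$ around $x_0$, replace its domain by its intersection with its $\tau$-image so that it becomes $\tau$-invariant, and post-compose $w$ with a $\C$-linear automorphism of $\C$ so that the $\C$-antilinear involution $T_{x_0}\tau$ is ordinary complex conjugation in the coordinate $w$. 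Then set $z:=\tfrac{1}{2}(w+\overline{w\circ\tau})$: since $w\circ\tau$ is anti-holomorphic, $z$ is holomorphic; differentiating at $x_0$ and using $\tau(x_0)=x_0$ together with the normalisation of $T_{x_0}\tau$ gives $dz_{x_0}=dw_{x_0}$, so $z$ is a biholomorphism onto a disc after one more shrinking of $U$; and $z\circ\tau=\tfrac{1}{2}(w\circ\tau+\overline{w})=\overline{z}$. In this coordinate $\Fix(\tau)\cap U=z^{-1}(\Delta\cap\R)$ is a real-analytic arc, $z$ descends to a homeomorphism $U/\tau\xrightarrow{\sim}\psi(\Delta)\subset\C_+$, and through these identifications $p|_U$ is precisely the folding map $\psi$ of \eqref{folding_map} restricted to $\Delta$, carrying $\Delta\cap\R$ onto the boundary segment of the half-disc $\psi(\Delta)$.

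Granting the local model, the \emph{diffeomorphism} assertion follows easily. On $\Si\setminus\Fix(\tau)$ the involution $\tau$ acts freely, so --- being an order-two homeomorphism of the Hausdorff space $\Si$ --- there $p$ is a two-sheeted covering onto an open subset of $\Si/\tau$, which is therefore a surface without boundary near such points. Combined with the local model, this shows that $\Si/\tau$ is a surface with boundary, that $\partial(\Si/\tau)=p(\Fix(\tau))$, and that the smooth structure near the boundary is the one supplied by the half-disc charts above. Now $p|_{\Fix(\tau)}$ is injective because $\tau$ restricts to the identity on $\Fix(\tau)$; it is onto $\partial(\Si/\tau)$ by the previous sentence; and in the charts above it is the identity arc $\Delta\cap\R\to\Delta\cap\R$. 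Hence it is a diffeomorphism onto $\partial(\Si/\tau)$.

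For the \emph{sheaf statements}, let $f$ be a section of $(p_*O_\Si)^\tau$ over $V=U/\tau$, i.e.\ a holomorphic function on $U=p^{-1}(V)$ with $f\circ\tau=\overline f$. On $\Fix(\tau)\cap U$ one has $\tau(x)=x$, whence $f(x)=\overline{f(x)}\in\R$; and since $\psi$ fixes $\R\subset\C_+$ pointwise, the function $\widehat f=\psi\circ f$ of \eqref{functions_on_Klein_surface} coincides with $f$ on $\Fix(\tau)\cap U$, which proves that $\widehat f$ is $\R$-valued on $\partial(\Si/\tau)$. For the continuity statement, $\psi\circ f$ is continuous on $U$ and honestly $\tau$-invariant (because $\psi(\overline\zeta)=\psi(\zeta)$ while $f\circ\tau=\overline f$), whereas $p$ is an open surjection --- the saturation of an open set is its union with its $\tau$-translate --- hence a quotient map; therefore $\widehat f$ is continuous on $V$. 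Finally, $f\mapsto\widehat f$ commutes with restrictions and is a morphism of sheaves of $\R$-algebras, and on a connected open meeting $\Fix(\tau)$ it is injective, since there the holomorphic function $f$ is determined by its values on the arc $\Fix(\tau)\cap U$, where $f=\widehat f\circ p$; this realises $(p_*O_\Si)^\tau$ as a subsheaf of the sheaf of continuous functions on $\Si/\tau$.

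The one step carrying real content is the local model: exhibiting a holomorphic coordinate in which $\tau$ becomes complex conjugation, equivalently a holomorphic linearisation of the $\{\Id,\tau\}$-action near a fixed point. Once that is in hand, the remaining arguments are bookkeeping with the folding map $\psi$ and the quotient topology.
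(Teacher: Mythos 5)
Your proof takes a genuinely different and more substantial route than the paper's. The paper's own proof is a single observation: a section $f$ of $(p_*O_\Si)^\tau$ satisfies $\overline{f(\tau(x))}=f(x)$, so $f(x)\in\R$ whenever $\tau(x)=x$. It does not address the diffeomorphism assertion (which it relies on implicitly, drawing on the local normal form established for Proposition \ref{topology_of_fixed_point_set}) nor the subsheaf assertion (which it defers to Exercise \ref{sheaf_theoretic_perspective}). You instead prove a local linearisation of $\tau$ near a fixed point by an averaging argument ($z:=\tfrac12(w+\overline{w\circ\tau})$ after normalising $T_{x_0}\tau$ to be conjugation), and then read all three assertions off that model. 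This averaging construction is both correct and cleaner than the argument the paper uses in Proposition \ref{topology_of_fixed_point_set}, which invokes the normal form $z\mapsto z^k$ for a general holomorphic map in a way that, taken at face value, would force $z\mapsto\overline{\tau(z)}$ to \emph{be} the identity rather than merely conjugate to it; your version correctly conjugates the antiholomorphic involution itself.

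There is, however, a gap in your treatment of the subsheaf assertion, and it is worth flagging because it is a defect in the statement itself and not merely in your proof. You establish injectivity of $f\mapsto\widehat f$ only over connected opens $V$ meeting $\partial(\Si/\tau)$, and then conclude that $(p_*O_\Si)^\tau$ is a subsheaf of $C^0_{\Si/\tau}$. But a subsheaf inclusion must be a monomorphism, hence injective on \emph{every} open set (equivalently on every stalk), and this fails at interior points: if $V$ is a small connected open with $p^{-1}(V)=D\sqcup\tau(D)$ disconnected and $c\notin\R$, then the constant section $c$ on $D$ (extended by $\overline c$ on $\tau(D)$) and the constant section $\overline c$ on $D$ (extended by $c$ on $\tau(D)$) are distinct sections of $(p_*O_\Si)^\tau$ over $V$ with the same image $\widehat f\equiv\psi(c)$. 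So the assignment $f\mapsto\widehat f$ is a morphism of sheaves, and its kernel is supported on interior constants; it is injective precisely on stalks at boundary points. If you want the subsheaf conclusion you must either reformulate it (e.g.\ as a statement about stalks at boundary points, or by passing to the image sheaf) or note that the statement as written is slightly too strong. In fairness the paper neither proves this clause nor seems to notice the issue, so your proof is still a net improvement; just do not assert more than your injectivity argument gives.
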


\begin{proof}
A local section of $(p_*O_\Si)^\tau$ is a holomorphic function $f:U\lra \C$ defined on a $\tau$-invariant open subset $U$ of $\Si$ and satisfying, for all $x\in U$, $$\ov{f\big(\tau(x)\big)} = f(x).$$ In particular, if $\tau(x)=x$, then $f(x)\in\R$.
\end{proof}

\noindent As a simple example of a Real Riemann surface, consider the Riemann sphere $\CP^1=\C\cup\{\infty\}$ with real structure $\tau:z\lmt \ov{z}$, whose fixed-point set is the circle $\RP^1=\R\cup\{\infty\}$. Another real structure on $\CP^1$ is given by $\tau_0:z\lmt -\frac{1}{\ov{z}}$, which has no fixed points. These two real structures are the only possible ones on $\CP^1$ (see Exercise \ref{real_structures_on_proj_line}). Examples of real structures on compact curves of genus $1$ are given in Exercise \ref{real_curves_of_genus_one}.

\subsection{Topological types of real curves}

The goal of this subsection is to provide topological intuition on real algebraic curves $(\Si,\tau)$, as well as a very useful classification result. This begins with a topological understanding of the fixed-point set $\Si^\tau$ (the topological surface $\Si/\tau$ will also play a r\^ole later on). We assume throughout that $\Si$ is compact and connected and we denote by $g$ its genus.

\begin{proposition}\label{topology_of_fixed_point_set}
Let $(\Si,\tau)$ be a compact connected Real Riemann surface. Then the fixed-point set $\Si^\tau:=\Fix(\tau)$ is a finite union of circles.
\end{proposition}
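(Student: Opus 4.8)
The plan is to prove that $\Si^\tau=\Fix(\tau)$ is a compact, boundaryless, real one-dimensional submanifold of $\Si$, and then to quote the classification of compact $1$-manifolds. The only step that requires genuine work is understanding $\Si^\tau$ locally near a fixed point, and the subtle point is a dimension count: although $\Fix(\tau)$ is defined by the single ``complex'' condition $\tau(w)=w$, it must come out one-real-dimensional rather than zero-dimensional, and the way to see this is to linearize $\tau$ at each of its fixed points.

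So let $x\in\Si^\tau$ and choose a holomorphic coordinate $w$ centered at $x$ on a disc $D$. Because $\tau$ is anti-holomorphic and $\tau(x)=x$, in this coordinate $\tau$ has the form $w\lmt \ov{g(w)}$ with $g$ holomorphic and $g(0)=0$; the relation $\tau^2=\Id_\Si$ becomes $g\big(\ov{g(w)}\big)=\ov{w}$, and comparing linear terms forces $|g'(0)|=1$ (in particular $g'(0)\neq 0$), so after replacing $w$ by $\mu w$ for a suitable unimodular constant $\mu$ we may assume $g'(0)=1$. Now put $z:=h(w):=\tfrac12\big(w+g(w)\big)$. Then $h$ is holomorphic with $h'(0)=1$, hence a biholomorphism of a smaller disc onto its image, i.e.\ a new holomorphic coordinate at $x$; and, using $g(\ov{g(w)})=\ov{w}$, one computes $h(\tau(w))=\tfrac12\big(\ov{g(w)}+\ov{w}\big)=\ov{h(w)}$. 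Thus in the coordinate $z$ the involution $\tau$ is exactly $z\lmt\ov{z}$, and near $x$ the fixed-point set $\Si^\tau$ is $\{z:z=\ov z\}$, a real-analytic arc. (One could also invoke the smooth linearization of the $\Z/2\Z$-action generated by $\tau$ near the fixed point: by Definition \ref{def:real_curve} the tangent map $T_x\tau$ is a $\C$-anti-linear involution of $T_x\Si\cong\C$, hence conjugate to complex conjugation, whose fixed locus is a real line, so $\Si^\tau$ is a smooth $1$-submanifold --- which is all that is needed here.)

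It follows that $\Si^\tau$ is a one-dimensional submanifold of $\Si$ with empty boundary. Moreover $\Si^\tau$ is closed in $\Si$, being the coincidence set of the two continuous maps $\tau$ and $\Id_\Si$; since $\Si$ is compact, $\Si^\tau$ is compact. A compact boundaryless $1$-manifold has finitely many connected components, each diffeomorphic to $S^1$, so $\Si^\tau$ is a finite disjoint union of circles, which proves the proposition.

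The fiddliest computations will be the identity $g(\ov{g(w)})=\ov w$ and the normalization $g'(0)=1$, but the conceptual heart is the linearization of $\tau$ at its fixed points: without it, the equation $\tau(w)=w$ naively looks like two real equations, and the expected dimension of $\Si^\tau$ would be wrong.
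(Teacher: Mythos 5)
Your proof is correct and follows the same overall strategy as the paper's: show that near each fixed point $\tau$ is conjugate to $z\lmt\ov z$, deduce that $\Si^\tau$ is a closed real $1$-submanifold without boundary, then invoke compactness of $\Si$ and the classification of compact $1$-manifolds. The one place where you go beyond the paper is worth flagging: the paper dispatches the local linearization by appealing to the fact that a bijective holomorphic germ is ``locally holomorphically conjugate to $z\lmt z$,'' which is only true in the normal-form sense (independent source and target coordinates) and does not by itself produce a single coordinate in which $\tau$ becomes $z\lmt\ov z$; one really must use $\tau^2=\Id$ at this point. Your averaging trick $h(w)=\tfrac12\bigl(w+g(w)\bigr)$, after the unimodular normalization forcing $g'(0)=1$, does exactly this, and is the standard self-contained way to make the paper's terse assertion rigorous. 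The parenthetical alternative you give (linearize at the tangent level and apply the constant-rank/implicit-function theorem to $\tau-\Id$) also works and is if anything more robust, since it avoids picking special coordinates; either route establishes that $\Si^\tau$ is a $1$-submanifold, which is all the remainder of the argument needs.
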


\begin{proof}
Let $x\in \Si$ and let $(U,\phi)$ be a $\tau$-invariant holomorphic chart about $x$. In the chart, the map $z\lmt \ov{\tau(z)}$ is a bijective holomorphic map, so it is locally holomorphically conjugate to $z\lmt z$ (a general holomorphic map is locally holomorphically conjugate to $z\mapsto z^k$, see for instance \cite{Szamuely}; it is bijective if and only if $k=1$). So $\tau$ is locally holomorphically conjugate to $z\longmapsto \ov{z}$. In particular, $\Si^\tau=\Fix(\tau)$ is a $1$-dimensional real (analytic) manifold. Since $\Si$ is compact, $\Si^\tau$ is compact and has a finite number of connected components, each of which is a compact connected $1$-dimensional real manifold, therefore diffeomorphic to a circle.
\end{proof}

\begin{definition}
A fixed point of $\tau$ in $\Si$ is called a \textbf{real point} of $(\Si,\tau)$ and a connected component of $\Si^\tau$ is called a circle of real points.
\end{definition}

The first step into the topological classification of real curves is provided by Harnack's inequality\footnote{As a matter of fact, Harnack proved Theorem \ref{Harnack_thm} for real \textit{plane} curves and Klein was the one who proved it for arbitrary real curves, as was pointed out to the author by Erwan Brugall\'e.}, which gives a topological upper bound on the number of connected components of $\Si^\tau$ (topological in the sense that it does not depend on the choice of the complex structure on $\Si$, only on its genus). We give a differential-geometric proof which uses the classification of surfaces (which one can read about for instance in \cite{Donaldson_RS}). 

\begin{theorem}[Harnack's inequality, \cite{Harnack}]\label{Harnack_thm}
Let $(\Si,\tau)$ be a Real Riemann surface and let $k$ be the number of connected components of $\Si^\tau$. Then $k\leq g+1.$ If $k=g+1$, then $\Si/\tau$ is homeomorphic to a sphere minus $g+1$ discs.
\end{theorem}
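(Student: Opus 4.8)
The plan is to pass to the quotient surface $\Si/\tau$ and compare Euler characteristics. Recall from the discussion preceding this theorem that $\Si/\tau$ is a compact connected $2$-manifold with boundary, and that by the Proposition just proved the projection $p:\Si\lra\Si/\tau$ restricts to a diffeomorphism from $\Fix(\tau)=\Si^\tau$ onto $\partial(\Si/\tau)$; combined with Proposition \ref{topology_of_fixed_point_set}, this shows that $\partial(\Si/\tau)$ is a disjoint union of exactly $k$ circles. If $k=0$ the inequality $k\le g+1$ holds trivially and the equality statement is vacuous, so I would henceforth assume $k\ge 1$.

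The key point is the identity $\chi(\Si/\tau)=1-g$. To obtain it, I would fix a CW-structure on $\Si/\tau$ in which $\partial(\Si/\tau)$ is a subcomplex, and pull it back through $p$ to a $\tau$-invariant CW-structure on $\Si$. Since $\tau$ acts freely on $\Si\setminus\Si^\tau$, every open cell lying above the interior of $\Si/\tau$ has exactly two preimages, interchanged by $\tau$; since $p$ is injective on $\Si^\tau$, every cell lying above $\partial(\Si/\tau)$ has exactly one preimage. Counting cells dimension by dimension and taking the alternating sum gives
$$\chi(\Si)=2\,\chi(\Si/\tau)-\chi\big(\partial(\Si/\tau)\big).$$
As $\partial(\Si/\tau)$ is a union of circles, $\chi(\partial(\Si/\tau))=0$, and as $\Si$ is a compact connected oriented surface of genus $g$, $\chi(\Si)=2-2g$; hence $\chi(\Si/\tau)=1-g$.

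I would then invoke the classification of compact surfaces with non-empty boundary: either $\Si/\tau$ is orientable of genus $h\ge 0$ with $k$ boundary circles, so that $\chi(\Si/\tau)=2-2h-k$, or $\Si/\tau$ is non-orientable with $m\ge 1$ cross-caps and $k$ boundary circles, so that $\chi(\Si/\tau)=2-m-k$. Setting these equal to $1-g$: in the orientable case $g=2h+k-1\ge k-1$, so $k\le g+1$, with equality if and only if $h=0$; in the non-orientable case $g=m+k-1\ge k$, so $k\le g$. In every case $k\le g+1$. Moreover, if $k=g+1$ then we must be in the orientable case with $h=0$, i.e.\ $\Si/\tau$ is homeomorphic to a $2$-sphere with $g+1$ pairwise disjoint open discs removed, which is the second assertion.

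The only delicate point is the Euler characteristic identity $\chi(\Si)=2\chi(\Si/\tau)-\chi(\Si^\tau)$, which is where the precise behaviour of $p$ is used — two-to-one over the interior of $\Si/\tau$ and one-to-one over its boundary; this behaviour itself follows from the local normal forms for $\tau$ at a free orbit and at a fixed point (the latter being $z\mapsto\ov z$, as in the proof of Proposition \ref{topology_of_fixed_point_set}). Note that one should \emph{not} try to shortcut this by asserting that $\Si$ is the topological double of $\Si/\tau$: that is only true when $\Si/\tau$ is orientable, whereas the cell-counting argument is valid uniformly. Everything else is the classification of surfaces, which the statement permits us to use as a black box.
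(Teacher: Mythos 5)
Your proof is correct and follows essentially the same route as the paper: establish $\chi(\Si/\tau)=\tfrac12\chi(\Si)=1-g$ by a cell-counting argument over the two-to-one projection (the paper uses a pulled-back triangulation and the observation that boundary vertices and edges cancel, while you state the slightly more general identity $\chi(\Si)=2\chi(\Si/\tau)-\chi(\partial(\Si/\tau))$ and then use $\chi(\partial(\Si/\tau))=0$), then apply the classification of compact surfaces with boundary to get $\chi(\Si/\tau)\le 2-k$ with equality exactly for a sphere minus $k$ discs. The only differences are cosmetic (CW-structures versus triangulations), so there is nothing substantive to add.
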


\noindent It should be noted that, in full strength, Theorem \ref{Harnack_thm} says that the above bound is optimal~: for any $g\geq 0$, there exists a Real Riemann surface $(\Si,\tau)$ such that $\Si^\tau$ has exactly $(g+1)$ connected components.

\begin{lemma}\label{Euler_charac_of_quotient}
The Euler characteristic of $\Si$ and $\Si/\tau$ are related in the following way~: $$\chi(\Si/\tau) = \frac{1}{2}\chi(\Si) = 1-g.$$
\end{lemma}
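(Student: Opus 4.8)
The plan is to compute $\chi(\Si/\tau)$ from a $\tau$-equivariant cell decomposition of $\Si$, using that $\Si^\tau$ is a finite disjoint union of circles (Proposition \ref{topology_of_fixed_point_set}) and that $\chi(\Si)=2-2g$ since $\Si$ is a compact connected orientable surface of genus $g$. First I would fix a triangulation, or more economically a finite CW structure, of $\Si$ that is invariant under $\tau$ and in which $\Si^\tau$ appears as a subcomplex. Such a structure exists: $\tau$ is a smooth (indeed real-analytic) involution, so one may invoke equivariant triangulation, but it can also be built by hand using the local model from the proof of Proposition \ref{topology_of_fixed_point_set}, where $\tau$ is holomorphically conjugate to $z\mapsto\ov{z}$, so that near a real point the fixed locus is a coordinate line which one can include in the $1$-skeleton.

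Next I would sort the cells of $\Si$ according to whether or not they lie in $\Si^\tau$. A cell not contained in $\Si^\tau$ is carried by $\tau$ to a disjoint cell, so such cells come in pairs; in particular the integers $V-V_0$, $E-E_0$ and $F$ are even, where $V,E,F$ count the cells of $\Si$ in each dimension and $V_0,E_0$ count those lying in $\Si^\tau$ (there are no $2$-cells in $\Si^\tau$, it being one-dimensional). The quotient $\Si/\tau$ then inherits a cell structure with $V_0+\frac{1}{2}(V-V_0)$ vertices, $E_0+\frac{1}{2}(E-E_0)$ edges and $\frac{1}{2}F$ faces, so a short computation gives
\[
\chi(\Si/\tau)=\frac{1}{2}\bigl(\chi(\Si)+\chi(\Si^\tau)\bigr).
\]
Since $\Si^\tau$ is a finite disjoint union of circles, $\chi(\Si^\tau)=0$, and therefore $\chi(\Si/\tau)=\frac{1}{2}\chi(\Si)=\frac{1}{2}(2-2g)=1-g$.

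The only genuinely delicate point is the existence of the $\tau$-invariant cell structure with $\Si^\tau$ as a subcomplex; everything after that is bookkeeping. An alternative that avoids triangulations altogether is to exploit additivity of the Euler characteristic for the decomposition of $\Si$ into the open set $\Si\setminus\Si^\tau$ and the closed set $\Si^\tau$: the restriction of $p$ to $\Si\setminus\Si^\tau$ is a genuine (unramified) double cover of $(\Si/\tau)\setminus\partial(\Si/\tau)$, while $p$ restricts to a homeomorphism $\Si^\tau\to\partial(\Si/\tau)$ by the preceding proposition; together with $\chi(\Si^\tau)=\chi(\partial(\Si/\tau))=0$ this again yields $\chi(\Si)=2\,\chi(\Si/\tau)$, hence the claim.
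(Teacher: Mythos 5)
Your proof is correct and essentially the same as the paper's: both exploit a triangulation compatible with $\tau$ together with the observation that the fixed locus/boundary, being a finite union of circles, has Euler characteristic zero, so that the halving of the cell count is exact. The only difference is the direction of the construction: the paper triangulates the compact surface with boundary $\Si/\tau$ and pulls the triangulation back along $p$, which sidesteps the one point you flag as delicate (the existence of a $\tau$-invariant cell structure on $\Si$ with $\Si^\tau$ as a subcomplex), whereas you build the equivariant structure upstairs and push it down. Your explicit intermediate identity $\chi(\Si/\tau)=\tfrac{1}{2}\bigl(\chi(\Si)+\chi(\Si^\tau)\bigr)$ is a nice way to package the bookkeeping, and your alternative argument via additivity of $\chi$ over the decomposition into $\Si\setminus\Si^\tau$ and $\Si^\tau$ is also sound.
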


\begin{proof}
The canonical projection $p: \Si \lra \Si/\tau$ is two-to-one outside $\partial(\Si/\tau)$ and induces by restriction a diffeomorphism $\Si^\tau \simeq \partial(\Si/\tau)$. Starting from a triangulation $T$ of $\Si/\tau$, the pullback under $p$ of any triangle not intersecting $\partial(\Si/\tau)$ (call this type (a), say) gives two disjoint triangles in $\Si$ (provided the triangles in the original triangulation $T$ are small enough, which can be assumed). The pullback of a triangle having a vertex on $\partial(\Si/\tau)$ (call this type (b)) gives two triangles in $\Si$ having exactly one vertex in common. And the pullback of a triangle having an edge on $\partial(\Si/\tau)$ (call this type (c)) gives two triangles in $\Si$ having exactly one edge in common. Since $\partial(\Si/\tau)$ is a union of $k$ circles, its Euler characteristic is zero. So, in the triangulation $T$, there are as many triangles of type (b) as those of type (c). In particular, in the triangulation $T$, the number $v_1$ of vertices lying  in $\partial(\Si/\tau)$ is equal to the number $e_1$ of edges lying in $\partial(\Si/\tau)$. We denote by $v_2$ (resp. $e_2$, resp. $c$) the number of remaining vertices (resp. edges, resp. faces) in $T$. Then $$\chi(\Si/\tau) = (v_1+v_2) - (e_1+e_2) + c = v_2 - e_2 + c$$ and, using the pullback triangulation $p^*T$ of $\Si$ described above, $$\chi(\Si)= (v_1 + 2v_2) - (e_1+2 e_2) +2c = 2v_2 - 2e_2 +2c = 2 \chi(\Si/\tau). \qedhere$$
\end{proof}

\begin{proof}[Proof of Theorem \ref{Harnack_thm}]
$\Si/\tau$ is a compact connected surface with $k$ boundary components. It follows from the classification of surfaces that $$\chi(\Si/\tau)\leq 2-k.$$ Indeed, if $\Si/\tau$ is orientable, it is either a sphere minus $k$ discs (in which case $\chi(\Si/\tau)=2-k$) or a connected sum of $\widehat{g}$ tori minus $k$ discs (in which case $\chi(\Si/\tau) = 2-2\widehat{g}-k<2-k$). If $\Si/\tau$ is non-orientable, it is a connected sum of $h\geq 1$ real projective planes $\RP^2$ minus $k$ discs (in which case $\chi(\Si/\tau) = 2-h-k <2-k$). Consequently, using Lemma \ref{Euler_charac_of_quotient}, we have $$2-k \geq \chi(\Si/\tau) = 1-g $$ so $k\leq g+1$. Moreover, if $k=g+1$ then $\chi(\Si/\tau)=2-k$ and we have seen in the course of the proof that this happens only if $\Si/\tau$ is homeomorphic to a sphere minus $k$ discs.
\end{proof}

It is clear from the definition of a homomorphism of real curves that the genus $g$ and the number $k$ of connected components of $\Si^\tau$ are topological invariants of $(\Si,\tau)$. Indeed, if $f:\Si\lra \Si'$ is a homeomorphism such that $f\circ\tau = \tau'\circ f$, then $f(\Si^\tau)\subset (\Si')^{\tau'}$ and $f$ induces a bijection connected components of $\Si^\tau$ and those of $(\Si')^{\tau'}$. Let us now set \begin{equation}\label{orientability_index} a= \left\{ \begin{array}{cl} 0 & \mathrm{if}\ \Si\setminus \Si^\tau\ \mathrm{is\ not\ connected,} \\  1 & \mathrm{if}\ \Si\setminus \Si^\tau\ \mathrm{is\ connected}. \end{array}\right.\end{equation} In particular, if $k=0$, then $a=1$. The real structure $\tau$ is called \textit{dividing} if $a=0$ and \textit{non-dividing} if $a=1$. One sometimes finds the opposite convention in the literature but we do not know of a reason to prefer one or the other.

The rest of this subsection is devoted to showing that $(g,k,a)$ are complete topological invariants of real curves. The main tool to prove this classification result is the notion of \textit{double} of any given compact connected surface $S$.

\begin{definition}\label{double}
Let $S$ be a compact connected surface. A \textbf{double} for $S$ is a triple $(P,\si,q)$ where $P$ is an orientable surface with empty boundary, $\si$ is an orientation-reversing involution of $P$ and $q:P\lra S$ is a continuous map inducing a homeomorphism $\ov{q}: P/\si \overset{\simeq}{\lra} S$. A homomorphism of doubles is a continous map $\phi:P\lra P'$ such that $\phi\circ\si = \si'\circ \phi$.
\end{definition}

\noindent Note that the notion of orientation-reversing self-diffeomorphism makes sense on $P$ even without the choice of an orientation. We warn the reader that the terminology of Definition \ref{double} is not entirely standard (compare \cite{Natanzon_survey,AG}). Whatever the terminology, the point is to have the surface $P$ be \textit{orientable and with empty boundary} and $\si$ be orientation-reversing. As a matter of fact, we explicitly want the surface $P$ to be constructed from $S$ as follows.

\textit{First case~: $S$ is orientable}. Then $S$ is homeomorphic either to a genus $\widehat{g}=0$ surface minus $r$ discs ($r\geq 0$) or to a connected sum of $\widehat{g}$ tori minus $r$ discs ($\widehat{g}\geq 1$, $r\geq 0$). We denote by $\Si_{\widehat{g},r}$ these surfaces. Let $\Si_{\widehat{g},r}^{(1)}$ and $\Si_{\widehat{g},r}^{(2)}$ be two copies of $\Si_{\widehat{g},r}$ and set $$P:= \Big(\Si_{\widehat{g},r}^{(1)} \sqcup \Si_{\widehat{g},r}^{(2)}\Big) \big/ \sim$$ where $x\in \partial\Si_{\widehat{g},l+m}^{(1)}$ is glued to $x\in \partial\Si_{\widehat{g},l+m}^{(2)}$. In particular, if $r\geq 1$, $P$ is a closed orientable surface of genus $2\widehat{g}+r-1$. An involution $\si$ is defined by sending $x\in\Si_{\widehat{g},r}^{(1)}$ to $x\in\Si_{\widehat{g},r}^{(2)}$. Because of the way the gluing is made, $\si$ is orientation-reversing. The fixed-point set of $\si$ in $P$ has $r$ connected components and the map $q:P\lra \Si_{\widehat{g},r}$ sending $x\in\Si_{\widehat{g},r}^{(i)}$ to $x\in\Si_{\widehat{g},r}$ is well-defined and induces a homeomorphism $P/\si\simeq \Si_{\widehat{g},r}$. Note that $P$ is connected if and only if $r\geq 1$, i.e.\ if the original orientable surface $S$ has a non-empty boundary. This is the case of interest to us.

\textit{Second case~: $S$ is non-orientable} Then $S$ contains an open M\"obius band and there exists a unique integer $m=1$ or $2$ and $m$ disjoint open M\"obius bands $U_1,U_m$ contained in $S$ such that the compact connected surface $$S':= S \setminus (U_1 \cup U_m)$$ is orientable (see \cite{Massey} or \cite{Donaldson_RS}). If $S$ has, say, $l$ boundary components ($l\geq 0$), then $S' \simeq \Si_{\widehat{g},l+m}$ for some $\widehat{g}\geq 0$. We denote by $$\partial \Si_{\widehat{g},l+m} = A_1 \sqcup \cdots A_l \sqcup B_1 \sqcup B_m.$$ Each $B_j$ is a circle hence carries a fixed-point free involution $\si_j$ (think of  $z\lmt -z$ on $S^1$). We form $$P :=\Big( \Si_{\widehat{g},l+m}^{(1)} \sqcup \Si_{\widehat{g},l+m}^{(2)} \Big) \big/ \sim$$ where we identify a point $x$  of $A_i^{(1)}$ with $x\in A_i^{(2)}$, and a point $x\in B_j^{(1)}$ with the point $\si_j(x)\in B_j^{(2)}$. In particular, $P$ is a \textit{connected} closed orientable surface of genus $2\widehat{g}+(l+m)-1$. An orientation-reversing involution $\si$ is defined by sending $x\in\Si_{\widehat{g},r}^{(1)}$ to $x\in\Si_{\widehat{g},r}^{(2)}$. Because of the involutions $\si_j$ used in defining the gluing, the fixed-point set of $\si$ has $l$ (not $l+m$) connected components (namely the circles $A_i$) and $\si$ restricted to the circle $B_j$ is $\si_j$. In particular, $P/\si$ is homeomorphic to $S$ (not to $S'$).

Given a compact connected surface $S$, we denote by $(P_S,\si_S)$ the double of $S$ whose construction is given above. Note that to each $(P_S,\tau_S)$ there is associated a triple $(g_S,k_S,a_S)$, where $g_S$ is the genus of $P$, $k_S$ is the number of connected components of $\Fix(\si_S)$ (=the number of connected components of $\partial S$) and $a_S=0$ or $1$ is defined as in \eqref{orientability_index} depending on whether $P_S\setminus P_S^{\si_S}$ is connected. The following observations are direct consequences of the construction of $(P_S,\si_S)$ for a given $S$~:

\begin{itemize}
\item $S$ is non-orientable if and only if $P_S\setminus P_S^{\si_S}$ is connected (i.e.\ $a_S=1$).
\item Two compact connected surfaces $S$ and $S'$ are homeomorphic if and only if there is a homeomorphism $\phi:P_S\lra P_{S'}$ such that $\phi\circ \si_{S} = \si_{S'}\circ\phi$.
\item $S$ is homeomorphic to $S'$ if and only if $(g_S,k_S,a_S)=(g_{S'},k_{S'},a_{S'})$.
\end{itemize}

\noindent As a consequence, taking into account the classification of surfaces, one has~: if $(P,\si)$ is a pair consisting of a closed orientable surface and an orientation-reversing involution and if $S:=P/\si$, then there is a homeomorphism $\phi:P_S\lra P$ such that $\phi\circ\si_S = \si\circ P$. We have thus almost proved the following result, due to Weichold, which gives the topological classification of Real Riemann surfaces.

\begin{theorem}[Weichold's theorem, \cite{Weichold}]\label{Weichold_thm}
Let $(\Si,\tau)$ and $(\Si',\tau')$ be two compact connected Real Riemann surfaces. Then the following conditions are equivalent~:
\begin{enumerate}
\item There exists a homeomorphism $\phi:\Si\lra \Si'$ such that $\phi\circ\tau = \tau'\circ\phi$.
\item $\Si/\tau$ is homeomorphic to $\Si'/\tau'$.
\item $(g,k,a)=(g',k',a')$.
\end{enumerate}

\noindent The triple $(g,k,a)$ is called the \textbf{topological type} of the Real Riemann surface $(\Si,\tau)$. It is subject to the following conditions~:

\begin{enumerate}
\item if $a=0$, then $1\leq k \leq g+1$ and $k\equiv (g+1) \mod{2}$,
\item if $a=1$, then $0\leq k \leq g$.
\end{enumerate}

\end{theorem}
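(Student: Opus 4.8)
The plan is to route everything through the double construction and the three bulleted observations that precede the statement, together with Harnack's inequality (Theorem~\ref{Harnack_thm}) and the Euler characteristic computation (Lemma~\ref{Euler_charac_of_quotient}). The first thing I would record is the point that makes this topology applicable to the analytic situation: since $\Si$ is a compact Riemann surface it is a closed orientable surface, and since $\tau$ is anti-holomorphic it is orientation-reversing (similarly for $(\Si',\tau')$). Hence, writing $S:=\Si/\tau$ and $S':=\Si'/\tau'$, the pair $(\Si,\tau)$ is a closed orientable surface equipped with an orientation-reversing involution, so by the discussion immediately before the theorem there is a homeomorphism $\phi_S:P_S\lra\Si$ with $\phi_S\circ\si_S=\tau\circ\phi_S$, and likewise $\phi_{S'}:P_{S'}\lra\Si'$ with $\phi_{S'}\circ\si_{S'}=\tau'\circ\phi_{S'}$. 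Consequently the triple $(g_S,k_S,a_S)$ attached to $S$ by the double construction is exactly $(g,k,a)$: the genus of $P_S\simeq\Si$ is $g$, the number of connected components of $\Fix(\si_S)$ equals the number $k$ of circles of real points of $(\Si,\tau)$, and $P_S\setminus P_S^{\si_S}$ is connected if and only if $\Si\setminus\Si^\tau$ is, so $a_S=a$.

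With this in hand the equivalences become formal. For $(1)\Rightarrow(2)$ an equivariant homeomorphism $\Si\lra\Si'$ descends to a homeomorphism of the quotients. For $(2)\Rightarrow(3)$ one recovers from $\Si/\tau$ the genus $g$ via $\chi(\Si/\tau)=1-g$, the number $k$ as the number of boundary components of $\Si/\tau$, and the index $a$ from the first observation (namely $a=1$ if and only if $\Si/\tau$ is non-orientable); so a homeomorphism $\Si/\tau\lra\Si'/\tau'$ forces $(g,k,a)=(g',k',a')$. For $(3)\Rightarrow(1)$, equality of the triples gives $(g_S,k_S,a_S)=(g_{S'},k_{S'},a_{S'})$, hence $S\simeq S'$ by the third observation, hence by the second observation a homeomorphism $\psi:P_S\lra P_{S'}$ with $\psi\circ\si_S=\si_{S'}\circ\psi$, and then $\phi_{S'}\circ\psi\circ\phi_S^{-1}:\Si\lra\Si'$ is the desired equivariant homeomorphism. (Equivalently one observes that each of $(1)$, $(2)$, $(3)$ is equivalent to the single condition $S\simeq S'$.)

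For the numerical constraints: $k\leq g+1$ is Harnack's inequality. Suppose $a=0$. Then $k\geq1$, because $k=0$ would give $\Si^\tau=\emptyset$, hence $\Si\setminus\Si^\tau=\Si$ connected and $a=1$; and, since $a=0$ forces $S=\Si/\tau$ orientable (first observation), we have $S\simeq\Si_{\widehat{g},k}$ for some $\widehat{g}\geq0$, so the double construction gives $g=2\widehat{g}+k-1$, i.e.\ $k=g+1-2\widehat{g}$, whence $k\equiv g+1\mod{2}$. Suppose now $a=1$. Then $0\leq k$ is automatic, and $k\leq g$: indeed $k=g+1$ would, by the equality case of Theorem~\ref{Harnack_thm}, make $\Si/\tau$ a sphere minus $g+1$ discs, hence orientable, forcing $a=0$ — a contradiction. (Alternatively: $a=1$ makes $S$ non-orientable, $S$ minus $m\in\{1,2\}$ open M\"obius bands is homeomorphic to $\Si_{\widehat{g},k+m}$, and $g=2\widehat{g}+k+m-1\geq k$.) I would also note that every triple satisfying these conditions occurs: take $S:=\Si_{(g+1-k)/2,\,k}$ when $a=0$, and, when $a=1$, take $S$ to be $\Si_{\widehat{g},k+m}$ with $m$ M\"obius bands glued back in, where $g-k=2\widehat{g}+m-1$ with $m\in\{1,2\}$; then set $\Si:=P_S$ and equip it with a $\si_S$-invariant conformal structure so that $\si_S$ becomes anti-holomorphic — this being the only input beyond the surface topology of the present subsection.

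The main obstacle is the ingredient the excerpt flags with the words ``we have thus almost proved'', namely that an arbitrary pair $(P,\si)$ with $P$ closed orientable and $\si$ orientation-reversing is equivariantly homeomorphic to the standard double of its quotient. Proving this amounts to reconstructing $(P,\si)$ from $S=P/\si$: one cuts $P$ along the $1$-manifold $\Fix(\si)$ — which, being locally a reflection locus, is a disjoint union of circles — and treats separately the dividing case, where $P\setminus\Fix(\si)$ splits into two pieces each mapped homeomorphically onto $S$ so that $S$ is orientable and $P$ is visibly the orientable double, and the non-dividing case, where one must recognise the cut surface as the orientation double of $S$ with the $m$ M\"obius neighbourhoods removed and check that the regluing matches the recipe used to build $P_S$. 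The bookkeeping of the boundary circles, and the verification that the resulting $\si$ is orientation-reversing with the prescribed fixed locus in each model, are where the care is needed; granted all this, the rest of the theorem is mechanical.
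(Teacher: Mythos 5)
Your proof takes essentially the same route as the paper's: the equivalence of (1)--(3) is derived from the double-construction observations immediately preceding the statement, and the constraints on $(g,k,a)$ come from Harnack's inequality together with the Euler characteristic of the quotient (Lemma~\ref{Euler_charac_of_quotient}). The only cosmetic difference is that you rule out $k=g+1$ when $a=1$ via the equality case of Theorem~\ref{Harnack_thm} rather than by computing $\chi$ of a non-orientable $\Si/\tau$ as the paper does; you also helpfully flag the reduction-to-standard-double step that the paper compresses into ``we have thus almost proved.''
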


Note that, in full strength, Weichold's theorem says that for each triple $(g,k,a)$ satisfying the conditions above, there actually exists a real algebraic curve with topological type $(g,k,a)$.

\begin{proof}[Proof of Theorem \ref{Weichold_thm}]
There only remains to prove that $(g,k,a)$ must satisfy the conditions listed in the theorem. Note that $0\leq k \leq g+1$ by Harnack's Theorem \ref{Harnack_thm} and that if $k=0$, then $a=1$ by definition of $a$ (see \eqref{orientability_index}). If $a=0$, then $\Si/\tau$ is orientable and homeomorphic to $\Si_{\widehat{g},k}$ for some $\widehat{g}\geq 0$, so $$\chi(\Si/\tau) = 2 - 2\widehat{g} -k.$$ But, by Lemma \ref{Euler_charac_of_quotient}, $$\chi(\Si/\tau) = \frac{1}{2} \chi(\Si) = 1-g,$$ so $k=g+1-2\widehat{g} \equiv g+1 \mod{2}$. If $a=1$, then $\Si/\tau$ is non-orientable and is therefore a connected sum of $h\geq 1$ real projective planes $\RP^2$, minus $k$ discs. So $\chi(\Si/\tau)=2-h-k$ and again $\chi(\Si/\tau)=1-g$, so $k-g=1-h\leq 0$.
\end{proof}

\subsection{Dianalytic structures on surfaces}

In this subsection, a surface will be a $2$-dimensional topological manifold (in particular, Hausdorff and second countable), possibly with boundary. It is well-known that a compact, connected, orientable surface admits a structure of complex analytic manifold of dimension one (if the surface has a boundary, the statement should be taken to mean that it is a compact submanifold with boundary inside an open Riemann surface, as we shall see later). The goal of the present subsection is to recall what type of structure one may put on the surface when it is not necessarily orientable: then it admits a structure of complex \textit{dianalytic} manifold of dimension one, also known as a Klein surface. The reference text for the general  theory of Klein surface is the book by Alling and Greenleaf (\cite{AG}). Dianalytic structures have recently found applications in open Gromov-Witten theory (see \cite{Liu}). In what follows, we use the terms 'holomorphic' and 'analytic' interchangeably. We recall that, if $U$ is an open subset of $\C$, a function $f:U\lra \C$ is called analytic if $\ov{\partial}f=0$ in $U$ and anti-analytic if $\partial f=0$.

\begin{definition}[Dianalytic function, open case]
Let $U$ be an open subset of $\C$. A function $f:U\lra \C$ is called \textbf{dianalytic} in $U$ if its restriction $f|_V$ to any given component $V$ of $U$ satisfies either $\ov{\partial}(f|_V) = 0$ or $\partial(f|_V)=0$, i.e.\,\,$f$ restricted to such a connected component is either analytic or anti-analytic.
\end{definition}

\noindent In order to put atlases with analytic or dianalytic transition maps on surfaces of the generality considered in this subsection, it is necessary to extend the notion of dianalyticity to the boundary case.

\begin{definition}[Analytic and dianalytic functions, boundary case]
Let $A$ be an open subset of  the closed upper half-plane $\C_+:=\{z\in\C\ |\ \Im{z}\geq 0\}$. A function $f:A\lra \C_+$ is called analytic (resp\,\,dianalytic) on $A$ if there exists an open subset $U$ of $\C$ such that $U\supset A$ and $f$ is the restriction to $A$ of an analytic (resp\,\,dianalytic) function on $U$.
\end{definition}

\noindent The next result will be key when relating Klein surfaces (Definition \ref{def_Klein_surface}) and Real Riemann surfaces (Definition \ref{def:real_curve}).

\begin{theorem}[The Schwarz reflection principle, e.g.\,\cite{Conway}]\label{Schwarz}
Let $A$ be an open subset of $\C_+=\{z\in\C\ |\ \Im{z}\geq 0\}$ and let $f:A\lra\C_+$ be a function satisfying~:
\begin{enumerate}
\item $f$ is continuous on $A$,
\item $f$ is analytic in the interior of $A$,
\item $f(A\cap\R)\subset \R$.
\end{enumerate}
\noindent Then there exists a unique analytic function $$F:A\cup \ov{A} \lra \C,$$ defined on the open subset $A\cup\ov{A}$ of $\C$, such that~:
\begin{enumerate}
\item $F|_A=f$,
\item $\forall\, z\in A\cup\ov{A},\, F(\ov{z})=\ov{F(z)}.$
\end{enumerate}
\end{theorem}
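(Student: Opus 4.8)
The plan is to prove the Schwarz reflection principle in the standard way: first construct the extension $F$ as a function defined piecewise, then show it is continuous, and finally upgrade continuity to analyticity using Morera's theorem. The uniqueness is immediate and I would dispatch it first — if $F_1$ and $F_2$ are two analytic functions on the connected-or-not open set $A\cup\ov{A}$ agreeing with $f$ on $A$, then they agree on the interior of $A$, which has an accumulation point in each component of $A\cup\ov{A}$ meeting $A$; since the hypothesis $F(\ov z)=\ov{F(z)}$ forces the values on $\ov A$ to be determined by the values on $A$, the two extensions coincide everywhere. (One should note $A\cup\ov A$ is symmetric under conjugation, so this makes sense.)

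For existence, I would define
\[
F(z) := \begin{cases} f(z) & \text{if } z\in A,\\ \ov{f(\ov z)} & \text{if } z\in \ov A.\end{cases}
\]
On the overlap $A\cap\ov A$, which is a subset of the real axis, both formulas give $f(z)$ because of hypothesis (3) that $f(A\cap\R)\subset\R$; so $F$ is well-defined. On the interior of $A$ it is analytic by hypothesis (2); on the interior of $\ov A$ the function $z\mapsto \ov{f(\ov z)}$ is analytic because conjugation-precompose-conjugation turns an anti-analytic dependence into an analytic one (concretely, $\delb_z(\ov{f(\ov z)}) = \ov{(\del_w f)(\ov z)} = 0$ since $f$ is analytic, i.e. $\del_w f$ is what survives). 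The symmetry relation $F(\ov z)=\ov{F(z)}$ holds by construction. The one genuinely delicate point is the behaviour along $A\cap\R$: I would use the continuity of $f$ on $A$ (hypothesis (1), including up to the boundary) to see that $F$ is continuous across the real axis, since the one-sided limits from $A$ and from $\ov A$ at a real point $x_0$ are $f(x_0)$ and $\ov{f(\ov{x_0})}=\ov{f(x_0)}=f(x_0)$ respectively, using again that $f(x_0)\in\R$.

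It remains to show that a function which is continuous on an open set $\Omega\subset\C$ and analytic off a line is in fact analytic everywhere on $\Omega$. For this I would invoke Morera's theorem: for any triangle $T\subset\Omega$, one shows $\int_{\partial T}F\,dz = 0$. If $T$ does not meet the real axis this is Cauchy's theorem; if $T$ meets $\R$, one splits $T$ into the part in $\C_+$ and the part in the lower half-plane, pushes the horizontal cut slightly off the axis, applies Cauchy's theorem to each piece, and lets the cut tend to the axis, using uniform continuity of $F$ on the compact set $T$ to control the limit. Hence $F$ has vanishing periods around all triangles and is therefore holomorphic on $\Omega = A\cup\ov A$ by Morera. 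The main obstacle is this last step — the careful limiting argument in Morera's theorem near the line of non-analyticity — but it is entirely classical, and I would simply cite \cite{Conway} rather than reproduce the $\varepsilon$-estimates in full, since the statement is flagged as standard.
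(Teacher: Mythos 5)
The paper never proves this theorem: it is stated with the marker ``e.g.\,\cite{Conway}'' and used later (to show that the transition maps $z_i\circ z_j^{-1}$ in the construction of the analytic double are analytic near the real axis), but no proof of it appears in the text. So there is no ``paper's own proof'' to compare against, and your proposal should be judged on its own.

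Your argument is correct and is the classical one, essentially the proof in the cited reference: define $F$ piecewise by $F(z)=f(z)$ on $A$ and $F(z)=\ov{f(\ov z)}$ on $\ov A$, check well-definedness on $A\cap\ov A\subset A\cap\R$ using hypothesis (3), check continuity across $\R$ using hypothesis (1), check analyticity off $\R$ directly, and then promote continuity to analyticity along the line via Morera's theorem. The uniqueness argument by analytic continuation plus the reflection identity is also right, and you correctly observe that $A\cup\ov A$ need not be connected, so one must use the symmetry to handle components of $A\cup\ov A$ that lie entirely in the open lower half-plane. One small slip worth fixing: in the computation $\delb_z\bigl(\ov{f(\ov z)}\bigr)$, the chain rule gives $\ov{(\delb_w f)(\ov z)}$, not $\ov{(\del_w f)(\ov z)}$; it is $\delb_w f$ (which vanishes because $f$ is analytic) that appears, while $\del_w f$ is the derivative that ``survives.'' The conclusion is unaffected. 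The Morera step is the genuinely substantial point and your outline (split the triangle along $\R$, push the cut off the axis, apply Cauchy, pass to the limit using uniform continuity of $F$ on the compact triangle) is exactly the standard treatment; deferring the $\eps$-bookkeeping to \cite{Conway} is appropriate given that the paper itself does the same.
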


\noindent We can now define Klein surfaces as objects that naturally generalize Riemann surfaces. In particular, this will handle the case of Riemann surfaces with boundary.

\begin{definition}[Klein surface]\label{def_Klein_surface}
A \textbf{Klein surface} is a surface $\Si$ equipped with an atlas $(U_i,\phi_i)$ satisfying~:
\begin{enumerate}
\item $\phi_i:U_i\lra V_i$ is a homeomorphism onto an open subset $V_i$ of $\C_+=\{z\in\C\ |\ \Im{z}\geq 0\}$,
\item if $U_i\cap U_j\neq\emptyset$, the map $$\phi_i\circ\phi_j^{-1}: \phi_j(U_i\cap U_j) \lra \phi_i(U_i\cap U_j)$$ is dianalytic.
\end{enumerate}
\noindent Such an atlas will be called a dianalytic atlas.\\
A \textbf{Riemann surface with boundary} is a surface $\Si$ equipped with an atlas $(U_i,\phi_i)$ satisfying~:
\begin{enumerate}
\item $\phi_i:U_i\lra V_i$ is a homeomorphism onto an open subset $V_i$ of $\C_+=\{z\in\C\ |\ \Im{z}\geq 0\}$,
\item if $U_i\cap U_j\neq\emptyset$, the map $$\phi_i\circ\phi_j^{-1}: \phi_j(U_i\cap U_j) \lra \phi_i(U_i\cap U_j)$$ is analytic.
\end{enumerate}
Such an atlas will be called an analytic atlas. It can only exist on an \textit{orientable} surface.
\end{definition}

\noindent So a Riemann surface in the usual sense is a Riemann surface with boundary and a Riemann surface with boundary is a Klein surface with a dianalytic atlas which is in fact analytic. As an example, any open subset $A$ of $\C_+$ is a Riemann surface with (possibly empty) boundary. More interesting examples are provided by the following result.

\begin{theorem}[\cite{AG}]
Let $S$ be a compact connected topological surface, possibly non-orientable and with boundary. Then there exists a dianalytic atlas on $S$. If $S$ is orientable, there exists an analytic atlas on it.
\end{theorem}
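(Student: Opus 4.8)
The plan is to construct the required atlas by combining three ingredients: the classification of surfaces, the existence of Riemann surface structures on closed orientable surfaces, and the double construction $(P_S,\sigma_S)$ introduced before Weichold's theorem. The point is that a dianalytic structure on $S$ is, essentially by the Schwarz reflection principle (Theorem \ref{Schwarz}), the same datum as a Riemann surface structure on a closed orientable surface $P$ together with an anti-holomorphic involution $\tau$ whose quotient is $S$; so it suffices to produce such a $\tau$ on the already-available Riemann surface structure of $P_S$.

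First I would treat the orientable case. If $S$ is a closed orientable surface, a Riemann surface (hence analytic, hence dianalytic) atlas exists by the classical uniformization-type result quoted in the excerpt, so there is nothing to do. If $S$ is orientable with non-empty boundary, I would realize $S$ as $P_S/\sigma_S$, where $P_S$ is the closed orientable double and $\sigma_S$ the orientation-reversing involution from the double construction. The substantive claim is then: one can choose a Riemann surface structure on $P_S$ for which $\sigma_S$ becomes anti-holomorphic, i.e. a real structure in the sense of Definition \ref{def:real_curve}. Granting this, pull back the holomorphic charts of $P_S$: around an interior point of $S$ use a chart of $P_S$ downstairs (where $q$ is a local homeomorphism), and around a boundary point use a $\sigma_S$-invariant chart $(U,\phi)$ of $P_S$ in which $\sigma_S$ reads as $z\mapsto\bar z$ (available by the local normal form for anti-holomorphic involutions, as in the proof of Proposition \ref{topology_of_fixed_point_set}); composing $\phi$ with the folding map $\psi$ of \eqref{folding_map} gives a chart of $S$ with values in $\C_+$. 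The Schwarz reflection principle guarantees that the transition maps between two such boundary charts, and between a boundary chart and an interior chart, are dianalytic: a transition map that is analytic on the interior and real-valued on the boundary segment extends analytically across, and in general one gets analytic or anti-analytic pieces component by component, which is exactly dianalyticity. For the non-orientable case the argument is identical: again $S=P_S/\sigma_S$ with $P_S$ a \emph{connected} closed orientable surface and $\sigma_S$ orientation-reversing (this is precisely why the double construction was set up to always yield an orientable $P$), so one runs the same pullback-of-charts-plus-folding-map construction.

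The main obstacle, and the step I would spend the most care on, is producing a Riemann surface structure on $P_S$ compatible with the given orientation-reversing involution $\sigma_S$ — the classical existence theorem only gives \emph{some} complex structure, not one adapted to $\sigma_S$. The clean way to handle this is an averaging argument: start from an arbitrary Riemann surface structure $J_0$ on $P_S$ (equivalently, an almost complex structure, i.e. a conformal class of Riemannian metrics); since $\sigma_S$ is orientation-reversing, the pulled-back structure $-\sigma_S^*J_0$ is again a complex structure inducing the same orientation, and one forms a $\sigma_S$-invariant conformal class by averaging a metric $h_0$ in the class of $J_0$ with $\sigma_S^*h_0$. In real dimension two every conformal class of metrics contains, by the classical existence of isothermal coordinates, a compatible complex structure, and by construction this structure $J$ satisfies $\sigma_S^*J=-J$, i.e. $\sigma_S$ is anti-holomorphic for $J$. (Alternatively one can invoke the classification: $P_S$ of genus $g_S$ carries a hyperbolic or flat or spherical metric, and one averages that; or one can cite the result, used implicitly in the sequel, that every topological involution of a surface is conjugate to an isometry of some constant-curvature metric.) Once $(P_S,J,\sigma_S)$ is in hand as a Real Riemann surface, the remaining verifications — that the folded charts cover $S$, that they take values in open subsets of $\C_+$, and that overlaps are dianalytic — are routine local computations via Theorem \ref{Schwarz}, and I would not belabor them.
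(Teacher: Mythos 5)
The paper gives no proof of this statement; it is quoted directly from \cite{AG}, so there is no in-text argument to compare with, and I assess your proposal on its own terms. Your overall strategy is the natural one and is essentially correct: realize $S$ as $P_S/\si_S$, upgrade the smooth double to a Real Riemann surface by averaging a metric to make $\si_S$ anti-conformal and invoking isothermal coordinates, and then descend charts through the folding map $\psi$, with Theorem~\ref{Schwarz} controlling the transition maps. The equivariant-uniformization step, which you correctly identify as the crux, is handled properly: a $\si_S$-invariant conformal class together with the ambient orientation of $P_S$ determines a complex structure $J$ with $\si_S^*J=-J$, and integrability is automatic in real dimension two.

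The one genuine gap concerns the second assertion of the theorem in the orientable-with-boundary case. Your fold-down, as you yourself state, produces a \emph{dianalytic} atlas on any $S$; but when $S$ is orientable the theorem promises an \emph{analytic} atlas, and you do not explain why the transition maps can be taken analytic. In fact, as written they need not be: an interior point of $S$ has two $q$-preimages in $P_S$, swapped by $\si_S$, and interior charts coming from the two different preimages overlap anti-analytically. The repair is to make the choice of preimage coherent, and this is precisely where orientability of $S$ enters. By the observation recorded right after the double construction, $P_S\setminus\Fix(\si_S)$ is disconnected exactly when $S$ is orientable; fix one of its two components, say $P^+$. Take all interior charts of $S$ from $P^+$ via $q|_{P^+}$, and at each boundary point normalize the $\si_S$-invariant chart $(U,\phi)$ (replacing $\phi$ by $-\phi$ if necessary) so that $\phi(U\cap P^+)$ lies in the open upper half-plane before applying $\psi$. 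With this consistent choice every transition map is the restriction to $\C_+$ of an analytic map, and Theorem~\ref{Schwarz} then yields an analytic, not merely dianalytic, atlas. In the non-orientable case $P_S\setminus\Fix(\si_S)$ is connected, no such coherent choice exists, and a dianalytic atlas is indeed all one can get --- which is what your argument correctly produces there.
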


\noindent The notion of homomorphism of Klein surfaces is somewhat more intricate.

\begin{definition}
A \textbf{homomorphism} between two Klein surfaces $\Si$ and $\Si'$ is a continuous map $f:\Si\lra 
\Si'$ such that~:
\begin{enumerate}
\item $f(\partial\Si)\subset \partial\Si'$,
\item for any $x\in \Si$, there exist \textit{analytic} charts $\phi:U\overset{\simeq}{\lra} V$ and $\phi':U'\overset{\simeq}{\lra} V'$, respectively about $x\in\Si$ and $f(x)\in\Si'$, and an \textit{analytic} map $F:\phi(U)\lra \C$ such that the following diagram, involving the folding map $$\psi:\begin{array}{ccc} \C & \lra & \C_+ \\ x+iy & \lmt & x+i|y| \end{array}$$ is commutative~:
$$
\xymatrix{
U \ar[rr]^{f} \ar[d]^{\phi} &  & U'  \ar[d]^{\phi'}\\
V \ar[r]^{F} & \C \ar[r]^{\psi} & V'.
}
$$
\end{enumerate}
\end{definition}

\noindent As an example, the Schwarz reflection principle implies that a dianalytic function $f:A\lra \C_+$  which is real on the boundary of $A$ is a homomorphism of Klein surfaces (Exercise \ref{example_of_morphism}). The next result relates Klein surfaces to Real Riemann surfaces.

\begin{theorem}[\cite{AG}]\label{Klein_vs_real}
Let $\Si$ be a Klein surface. Then there exist a Real Riemann surface $(\Sit,\tau)$ and a homomorphism of Klein surfaces $p:\Sit\lra \Si$ such that $p\circ \tau=p$ and the induced map $\bar{p}: \Sit/\tau\overset{\simeq}{\lra} \Si$ is an isomorphism of Klein surfaces. The triple $(\Sit,\tau,p)$ is called an \textbf{analytic double} of $\Si$.\\ If $(\Sit',\tau',p')$ is another analytic double of $\Si$, then there exists an analytic isomorphism $f:\Si\lra\Sit$ such that $f\circ\tau=\tau'\circ f$, i.e\,\,$(\Si,\tau)$ and $(\Si',\tau')$ are isomorphic as Real Riemann surfaces, and this construction defines an equivalence of categories between the category of Klein surfaces and the category of Real Riemann surfaces.
\end{theorem}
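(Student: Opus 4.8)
The plan is to (i) construct an analytic double $(\Sit,\tau,p)$ of a given Klein surface $\Si$, (ii) show that any two analytic doubles of $\Si$ are isomorphic over $\Si$, and (iii) promote this to an equivalence of categories. The single tool doing the real work is the Schwarz reflection principle (Theorem~\ref{Schwarz}); the rest is bookkeeping with charts. \emph{Construction.} Fix a dianalytic atlas $(U_i,\phi_i)$ of $\Si$, with $\phi_i$ a homeomorphism onto an open subset $V_i\subseteq\C_+$, and replace each $V_i$ by the conjugation-stable open set $\widetilde V_i:=V_i\cup\overline{V_i}\subseteq\C$, where $c:z\mapsto\overline z$. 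On an overlap, $g_{ij}:=\phi_i\circ\phi_j^{-1}$ is a dianalytic homeomorphism between open subsets of $\C_+$; since analytic and anti-analytic maps are open on interiors, $g_{ij}$ sends boundary points to boundary points and is therefore $\R$-valued on $\R$ on each component. Applying Theorem~\ref{Schwarz} to $g_{ij}$ on the components where it is analytic, and to $c\circ g_{ij}$ on the components where it is anti-analytic, one extends $g_{ij}$ to a map $\widetilde g_{ij}$ between neighbourhoods in $\widetilde V_j$ and $\widetilde V_i$ commuting with $c$; the cocycle identity for the $\widetilde g_{ij}$ follows from the one for the $g_{ij}$ by the identity theorem. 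Gluing the $\widetilde V_i$ along the $\widetilde g_{ij}$ produces a surface $\Sit$ on which $c$ induces an involution $\tau$, and the folding maps $\psi:\widetilde V_i\to V_i$ induce a continuous $p:\Sit\to\Si$. Away from $\partial\Si$, $\Sit$ carries near each point two conjugate chart choices $\widetilde\phi_i$ and $c\circ\widetilde\phi_i$; choosing the one that makes all relevant transition maps \emph{holomorphic} (rather than merely dianalytic) endows $\Sit$ with a Riemann surface structure for which $\tau$ is an anti-holomorphic involution. This choice is precisely where the dianalyticity of $\Si$, as opposed to its being a Riemann surface with boundary, is felt.

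\emph{$(\Sit,\tau,p)$ is an analytic double, and it is unique.} That $(\Sit,\tau)$ is a Real Riemann surface in the sense of Definition~\ref{def:real_curve} is clear from the construction, and $\Fix(\tau)$ maps homeomorphically onto $\partial\Si$. In the chart $\widetilde\phi_i$ the map $p$ reads $z\mapsto\psi(z)$, which is the local model required of a homomorphism of Klein surfaces (with $F=\Id$); since $\psi\circ c=\psi$ we get $p\circ\tau=p$, and since the fibres of $p$ are exactly the $\tau$-orbits, $p$ descends to a continuous bijection $\bar p:\Sit/\tau\to\Si$, which in the charts $\widetilde V_i/c\cong V_i$ is the identity, hence an isomorphism of Klein surfaces. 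For uniqueness, let $(\Sit',\tau',p')$ be another analytic double. Over $\Si\setminus\partial\Si$ both $p$ and $p'$ are unramified double covers on which $\tau$, resp.\ $\tau'$, acts freely by deck transformations, while over a chart meeting $\partial\Si$ one checks, using the homomorphism condition, the relation $p\circ\tau=p$, and the bijectivity of $\bar p$, that both are equivalent to the model $(\widetilde V_i,c,\psi)$. Hence over each small $U_i$ there is an isomorphism of Real Riemann surfaces $f_i:p^{-1}(U_i)\to {p'}^{-1}(U_i)$ with $p'\circ f_i=p$, the only indeterminacy being the swap of sheets over the orientable locus, which disappears once compatibility with $\tau$ and $\tau'$ is imposed; the $f_i$ then agree on overlaps and glue to an isomorphism $f:\Sit\to\Sit'$ with $f\circ\tau=\tau'\circ f$ and $p'\circ f=p$.

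\emph{Equivalence of categories.} Set $D(\Si)=(\Sit,\tau)$, and for a Real Riemann surface set $Q(\Si,\tau)=\Si/\tau$, equipped with the dianalytic structure described earlier via the sheaf $(p_*\mathcal{O}_\Si)^\tau$ and the folding map. A homomorphism $f:\Si\to\Si'$ of Klein surfaces lifts to a homomorphism $\widetilde f:\Sit\to\Sit'$ of Real Riemann surfaces: on the loci where the chart choices are unambiguous the lift is determined by $f$, and it extends across $\Fix(\tau)$ by continuity, the extension being holomorphic by Theorem~\ref{Schwarz}. This makes $D$ a functor, and descent makes $Q$ a functor. The isomorphisms $\bar p$ assemble into a natural isomorphism $Q\circ D\cong\Id$. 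In the other direction, for any Real Riemann surface $(\Si,\tau)$ the canonical projection $\Si\to\Si/\tau$ is a homomorphism of Klein surfaces that is $\tau$-invariant and induces an isomorphism on the quotient, i.e.\ $(\Si,\tau)$ is itself an analytic double of $Q(\Si,\tau)$; by the uniqueness just proved it is canonically isomorphic to $D(Q(\Si,\tau))$, naturally in $(\Si,\tau)$. Hence $D$ and $Q$ are mutually inverse equivalences of categories.

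\emph{Expected main obstacle.} The one genuinely delicate point is the orientation bookkeeping: showing that ``pick the holomorphic chart representative near each point'' can be carried out globally and functorially, so that $\Sit$ really is a well-defined Riemann surface and $D$ a well-defined functor on morphisms. Once that is in place, the remaining verifications reduce to repeated use of the Schwarz reflection principle and of the identification of $\tau$-orbits with the fibres of $p$.
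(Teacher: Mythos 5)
The construction of $\Sit$ as you present it has a genuine error, not merely a deferred bookkeeping step. You define $\widetilde g_{ij}$ so that it \emph{extends} $g_{ij}$ and commutes with $c$. When $g_{ij}=\phi_i\circ\phi_j^{-1}$ is anti-analytic on a component, this produces an anti-analytic $\widetilde g_{ij}$ sending $V_j$ into $V_i$ and $\ov{V_j}$ into $\ov{V_i}$: the gluing never swaps the two sheets of $\widetilde V_i=V_i\cup\ov{V_i}$. Consequently, over the interior of $\Si$, the projection $\Sit\setminus\Fix(\tau)\lra\Si\setminus\partial\Si$ is the \emph{trivial} double cover. For a closed non-orientable $\Si$ (e.g.\ $\RP^2$) this yields $\Sit\cong\Si\sqcup\Si$, a disconnected, non-orientable surface carrying \emph{no} analytic atlas; your subsequent step, choosing the chart representative that makes the transitions holomorphic, then has no solution. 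You flagged this as the ``one genuinely delicate point,'' but it is not an orientation subtlety to be resolved after the gluing: it is an obstruction that can only be removed by changing the gluing itself.

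The fix, and what the paper's sketch actually does, is to \emph{cross}-glue precisely when $g_{ij}$ is anti-analytic: glue $\phi_j(U_i\cap U_j)\subset V_j$ to $\ov{\phi_i}(U_i\cap U_j)\subset\ov{V_i}$, and $\ov{\phi_j}(U_i\cap U_j)\subset\ov{V_j}$ to $\phi_i(U_i\cap U_j)\subset V_i$. Concretely, when $g_{ij}$ is anti-analytic, the correct gluing map is the Schwarz extension of $c\circ g_{ij}$ itself (not its post-composition with $c$, which is what ``extends $g_{ij}$'' forces): it is analytic, still commutes with $c$, and sends $V_j$ into $\ov{V_i}$. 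With this gluing the new transition maps $z_i\circ z_j^{-1}$ on $\Sit$ are \emph{automatically} analytic, the Schwarz reflection principle being needed only near the fixed locus, and over the interior $p$ becomes the orientation double cover, which is connected exactly when $\Si$ is non-orientable. Once the construction is corrected, your outline of uniqueness and of the equivalence of categories is a reasonable sketch (the paper itself defers these verifications to Alling--Greenleaf), though one small imprecision: the sheet-swap indeterminacy in your uniqueness argument does not ``disappear once compatibility with $\tau$ and $\tau'$ is imposed,'' since post-composing $f$ with $\tau'$ preserves all the required compatibilities. The isomorphism of doubles is only unique up to that involution, which suffices but should be stated as such.
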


\noindent The last statement means that, given a Real Riemann surface $(\Si,\tau)$, the quotient $\Si/\tau$ is a Klein surface with analytic double $(\Si,\tau)$ and that homomorphisms between two Klein surfaces $\Si_1$ and $\Si_2$ correspond bijectively to homomorphisms of Real Riemann surfaces between their analytic doubles. Note that an analytic double of a Klein surface is a double in the sense of Definition \ref{double} (we recall that Real Riemann surfaces in these notes have empty boundary). If the Klein surface $\Si$ is in fact a Riemann surface (without boundary), its analytic double is $\Sit=\Si \sqcup \ov{\Si}$, where $\ov{\Si}$ denotes the surface $\Si$ where holomorphic functions have been replaced by anti-holomorphic functions (in particular, the identity map $\Si\lra \ov{\Si}$ of the underlying sets is anti-holomorphic and therefore defines a real structure on $\Sit$). Other notions of doubles are presented in \cite{AG} (the orienting double and the Schottky double).\\
It is not so difficult to sketch a proof of the existence of an analytic double of a given Klein surface $\Si$. If $(\phi_i: U_i \overset{\simeq}{\lra} V_i)_{i\in I}$ is a dianalytic atlas for $\Si$, form the open sets $W_i=V_i \cup \ov{V_i}$ in $\C$ and consider the topological space $\Omega:=\sqcup_{i\in I}W_i$. For simplicity, we assume that the $V_i$ are all discs or half-discs in $\C_+$. If $\phi_i\circ\phi_j^{-1}$ is analytic, we glue $\phi_j(U_i\cap U_j) \subset V_j$ to $\phi_i(U_i\cap U_j) \subset V_i$ and $\ov{\phi_j}(U_i\cap U_j)\subset\ov{V_j}$ to $\ov{\phi_i}(U_i\cap U_j) \subset \ov{V_i}$. If $\phi_i\circ\phi_j^{-1}$ is anti-analytic, we glue $\phi_j(U_i\cap U_j) \subset V_j$ to $\ov{\phi_i}(U_i\cap U_j) \subset \ov{V_i}$ and $\ov{\phi_j}(U_i\cap U_j)\subset\ov{V_j}$ to $\phi_i(U_i\cap U_j) \subset V_i$. This defines an equivalence relation $\sim$ on $\Omega$ and we set $\Sit:=\Omega/\sim$, with the quotient topology. We let $[W_i]$ denote the image of $W_i\subset \Omega$ under the canonical projection to $\Sit$. There is a canonical homeomorphism $z_i:[W_i]\lra W_i$ and we note that $\Sit$ has empty boundary. By definition of the gluing on $\Omega$, the map $z_i\circ z_j^{-1}$ is always analytic~: for instance, if $\phi_i\circ\phi_j^{-1}$ is anti-analytic, then $z_i\circ z_j^{-1}$ is the map $$v \lmt \left\{ \begin{array}{cl} \ov{\phi_i\circ\phi_j^{-1}} (v) & \mathrm{if}\ v\in V_j \\ \phi_i\circ\phi_j^{-1} (v) & \mathrm{if}\ v\in \ov{V_j}\end{array}\right.$$ so it is analytic by virtue of the Schwarz reflection principle (which is only used, really, in the case where $V_i\cap V_j$ intersects $\R$ in $\C_+$). The real structure $\tau$ on $\Sit$ is just complex conjugations in the local charts $([W_i],z_i)$ and, if read in local analytic charts, the projection $p:\Sit \lra \Si$ takes $x=a+\sqrt{-1}\,b \in W_i$ to $a+\sqrt{-1}\,|b|$, so it is a homomorphism of Klein surfaces. We refer to \cite{AG} for the rest of the proof of Theorem \ref{Klein_vs_real}.

\begin{remark}
In the rest of these notes, we will only be interested in connected Real Riemann surfaces $(\Si,\tau)$, i.e\,\,Klein surfaces $\Si/\tau$ whose underlying topological surface is either non-orientable or has non-empty boundary (both properties may hold at the same time). We will no longer deal with dianalytic structures and therefore we shall often call the pair $(\Si,\tau)$ itself a Klein surface.
\end{remark}

\begin{multicols}{2}

\begin{exercise}\label{real_structures_on_proj_line}
Recall that $\Aut(\CP^1)\simeq \PGL(2;\C)$, acting by homographic transformations. By analysing what it means to be an element of order $2$ in that group, show that the only possible real structures on $\CP^1$ are $z\lmt\ov{z}$ and $z\lmt -\frac{1}{\ov{z}}$.
\end{exercise}

\begin{exercise}\label{real_curves_of_genus_one}
Let $a,b,c$ be three distinct complex numbers such that the set $\{a;b;c\}\subset\C$ is invariant under complex conjugation in $\C$. Consider the real elliptic curve defined by the equation $P(x,y)=0$ where $$P(x,y) = y^2-(x-a)(x-b)(x-c).$$ \textbf{a.} Show that if $a,b,c$ are real numbers, then the real plane curve defined by $P$ has two connected components (in the usual Hausdorff topology of $\R^2$).\\ \textbf{b.} Show that if for instance $a$ is not real, then the real plane curve defined by $P$ has only one connected component.\\ \textbf{c.} What is the topological type of the real projective curve defined by $P$ in each of the two cases above?
\end{exercise}

\begin{exercise}
Let $P\in\C[x,y]$ be a non-constant polynomial in two variables with complex coefficients. Show that $$X_P(\C) = \{(u,v)\in\C^2\ |\ P(u,v) =0\}$$ is a \textit{non-compact} Riemann surface.
\end{exercise}

\begin{exercise}\label{sheaf_theoretic_perspective}
Show that the sheaf $(p_*O_\Si)^\tau$ defined in \eqref{real_holomorphic_functions} is a sheaf of $\R$-algebras on $\Si/\tau$ and that, via \eqref{functions_on_Klein_surface}, it is a subsheaf of the sheaf of smooth functions on $\Si/\tau$.
\end{exercise}

\begin{exercise}
\textbf{a.} Show that if $(P,\si)$ is an orientable surface endowed with an orientation-reversing involution, then $P^\si$ is a union of at most $(g+1)$ circles. \textit{Indication}~: It suffices to prove that $P^\si$ is a $1$-dimensional manifold and to reproduce the proof of Theorem \ref{Harnack_thm} (the point being to prove Proposition \ref{topology_of_fixed_point_set} without using complex analytic structures and the normal form of holomorphic maps).\\ \textbf{b.} Show that, in contrast, an orientation-preserving involution of an orientable surface has $0$-dimensional fixed-point set.
\end{exercise}

\begin{exercise}
Prove the classical relation for connected sums of surfaces $$\chi(S_1\# S_2) = \chi(S_1) + \chi(S_2) -2.$$ Check the proof of Theorem \ref{Harnack_thm} again.
\end{exercise}

\begin{exercise}
Draw a picture of an orientation-reversing involution $\si$ of an oriented surface $P$ of genus $g$ having exactly $(g+1)$ circles of real points.
\end{exercise}

\begin{exercise}
Let $\Si$ be a compact connected Riemann surface of genus $g$.\\ \textbf{a.} Show that there are $\left[\frac{3g+4}{2}\right]$ topological types of real structures on $\Si$.\\ \textbf{b.} Compute this number for low values of $g$ and identify topologically in each case the surface $\Si/\tau$. \textit{Indication~:} Use Weichold's Theorem (Theorem \ref{Weichold_thm}) and distinguish cases according to whether $g$ is even or odd.
\end{exercise}

\begin{exercise}
\textbf{a.} Show that functions defined on open subsets of the closed upper half-plane $\C_+$ that satisfy assumptions (1), (2), (3) of the Schwarz reflection principle (Theorem \ref{Schwarz}) form a sheaf of $\R$-algebras.\\ \textbf{b.} Show that the locally ringed space $(A,D_A)$ thus defined is isomorphic to $(U/\si,(p_*O_U)^\si)$, where $U=A\cup \ov{A}$, $\si$ is complex conjugation, and $(p_*O_U)^\si$ is defined as in Exercise \ref{sheaf_theoretic_perspective}.
\end{exercise}

\begin{exercise}\label{example_of_morphism}
Let $A$ be an open subset of the closed upper half-plane $\C_+$. Show that a dianalytic function $f:A\lra \C_+$ is a homomorphism of Klein surfaces.
\end{exercise}

\begin{exercise}
A real curve $(\Si,\tau)$ such that $k=g+1$ is called a \textit{maximal curve} (or $M$-curve). Show that $k=g+1$ if and only if the sum of  Betti numbers of $\Si^\tau$ (do not forget to sum over the $(g+1)$ connected components of $\Si^\tau$) is equal to the sum of Betti numbers of $\Si$. Using $\mathrm{mod}\,2$ coefficients, this gives a way to generalize Harnack's inequality to higher-dimensional (smooth and projective) real algebraic varieties: the total $\mathrm{mod}\,2$ Betti number of the real part is lower or equal to the total $\mathrm{mod}\,2$ Betti number of the complex part; this is known as the Milnor-Thom inequality (\cite{Milnor,Thom}).
\end{exercise}

\end{multicols}

\section{The fundamental group of a real algebraic curve}\label{fund_gp_real_curves}

\subsection{A short reminder on the fundamental group of a Riemann surface}\label{reminder_cx_case}

The topological fundamental group $\piC$ of a connected pointed topological space $(X,x)$ is usually taken to be the group of homotopy classes of continuous maps $\gamma:[0;1] \lra X$ such that $\gamma(0) = \gamma(1)= x$ (also called a loop at $x$). In what follows, we will often simply denote by $\gamma$ the homotopy class of the loop $\gamma$ (this should result in no confusion). A key feature of this group is that \textit{it acts to the left on the fibre at $x$ of any topological covering map $p:Y\lra X$}. Indeed, this is a consequence of the following lemma (we assume throughout that $X$ is \textit{connected} and that $X$ and $Y$ are \textit{Hausdorff spaces}).

\begin{lemma}[Lifting of paths and homotopies]\label{lifting_of_paths_and_homotopies}
Let $\gamma:[0;1]\lra X$ be a path starting at $x$ (i.e.\ a continuous map $\gamma:[0;1]\lra X$ such that $\gamma(0)=x$) and let $p:Y\to X$ be a topological covering map.
\begin{enumerate}
\item Given $y\in p^{-1}(\{x\})$, there exists a unique path $\gat: [0;1] \lra Y$ such that $p\circ \gat = \ga$ and $\gat(0) = y$.
\item If $\ga'$ is homotopic to $\ga$ , then $\widetilde{\ga'}$ is homotopic to $\gat$. In particular, $\widetilde{\ga'}(1) = \gat(1)$.
\end{enumerate}
\end{lemma}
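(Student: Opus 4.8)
The plan is to prove both parts by the classical subdivision argument, using compactness of $[0;1]$ (and of $[0;1]\times[0;1]$ for part (2)) together with the local triviality of the covering map $p$.

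For the existence in (1): since $p$ is a covering map, $X$ is a union of evenly-covered open sets $\{U_\alpha\}$, so $\{\ga^{-1}(U_\alpha)\}$ is an open cover of the compact metric space $[0;1]$ and the Lebesgue number lemma yields a subdivision $0=t_0<t_1<\cdots<t_n=1$ with $\ga([t_{j-1};t_j])\subset U_j$ for some evenly-covered $U_j$. One then constructs $\gat$ inductively: assuming $\gat$ has been defined on $[0;t_{j-1}]$ with $\gat(t_{j-1})$ lying in a unique sheet $V_j$ of $p^{-1}(U_j)$ (for $j=1$ this is the sheet containing $y$, using $\ga(0)=x$), set $\gat|_{[t_{j-1};t_j]}:=(p|_{V_j})^{-1}\circ\ga|_{[t_{j-1};t_j]}$; the pieces agree at the $t_j$ by construction and the result is continuous. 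For uniqueness, if $\gat_1,\gat_2$ both lift $\ga$ with $\gat_1(0)=\gat_2(0)=y$, the set $\{t\in[0;1] : \gat_1(t)=\gat_2(t)\}$ contains $0$, is closed since $Y$ is Hausdorff, and is open because near any $t$ both lifts are forced to remain in the single (open) sheet of $p^{-1}(U)$ containing the common value $\gat_1(t)=\gat_2(t)$, where $U$ is an evenly-covered neighbourhood of $\ga(t)$; by connectedness of $[0;1]$ the two lifts coincide.

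For (2), I read ``$\ga'$ homotopic to $\ga$'' as homotopic relative to endpoints --- which is the relevant notion here, since otherwise the final equality could fail --- so that there is a continuous $H:[0;1]\times[0;1]\lra X$ with $H(\cdot,0)=\ga$, $H(\cdot,1)=\ga'$, $H(0,t)=x$ and $H(1,t)=\ga(1)$ for all $t$. The key step is to lift $H$ to a continuous $\widetilde{H}:[0;1]\times[0;1]\lra Y$ with $\widetilde{H}(0,0)=y$, which is done by the two-dimensional version of the above: a Lebesgue-number subdivision of $[0;1]\times[0;1]$ into closed subsquares each mapped by $H$ into an evenly-covered open set, followed by an inductive construction of $\widetilde{H}$ one subsquare at a time (in lexicographic order), at each step using the unique sheet over the relevant open set that is compatible with the values of $\widetilde{H}$ already fixed on the left and bottom edges of the new square. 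Granting this lift, the conclusion is formal: $\widetilde{H}(0,\cdot)$ lifts the constant path $H(0,\cdot)\equiv x$ and starts at $y$, hence is constant equal to $y$ by the uniqueness in (1); similarly $\widetilde{H}(1,\cdot)$ lifts the constant path $H(1,\cdot)\equiv\ga(1)$, hence is constant; $\widetilde{H}(\cdot,0)$ and $\widetilde{H}(\cdot,1)$ lift $\ga$ and $\ga'$ and both start at $y$, hence equal $\gat$ and $\widetilde{\ga'}$ respectively; thus $\widetilde{H}$ is a homotopy relative to endpoints from $\gat$ to $\widetilde{\ga'}$, and setting $s=1$ gives $\widetilde{\ga'}(1)=\widetilde{H}(1,1)=\widetilde{H}(1,0)=\gat(1)$.

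The main obstacle is the bookkeeping in the square-by-square construction of $\widetilde{H}$ and the verification that the local pieces glue to a single continuous map: one must check that, when a new subsquare is adjoined, the already-defined lift on its left and bottom edges takes values in one sheet $V$ over the relevant evenly-covered open set --- so that $(p|_V)^{-1}\circ H$ extends it --- and that this extension agrees with the neighbouring pieces on shared edges. Each such check is an application of the one-dimensional uniqueness from (1) along an edge, but assembling them correctly requires care. The remaining ingredients --- the Lebesgue number lemma, the inductive path lift, and the formal deduction of the homotopy statement --- are routine.
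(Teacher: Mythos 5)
Your argument is correct and is the standard one. Note, however, that the paper itself does not prove Lemma \ref{lifting_of_paths_and_homotopies}: immediately after the statement it simply refers the reader to Forster \cite{Forster} for a proof. What you have written is precisely the classical Lebesgue-number/subdivision proof of path lifting and homotopy lifting found in \cite{Forster} and in any textbook treatment of covering spaces, so there is no conflict of method to compare.

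Two small remarks on your write-up. First, your uniqueness argument is sound as stated, since the paper does assume $X$ and $Y$ Hausdorff, so the coincidence set of $\gat_1$ and $\gat_2$ is closed as the preimage of the diagonal; the more common textbook argument shows directly that the disagreement set is also open (if $\gat_1(t)\neq\gat_2(t)$ they lie in disjoint sheets over an evenly covered neighbourhood of $\ga(t)$), which avoids invoking Hausdorffness of $Y$ and is worth knowing. Second, your reading of ``homotopic'' as ``homotopic relative to endpoints'' is exactly the intended one here --- the conclusion $\widetilde{\ga'}(1)=\gat(1)$ is false for free homotopies --- and this is the convention in force throughout the paper when discussing $\piC$, so the interpretation is warranted rather than an additional hypothesis you are smuggling in.
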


\noindent We refer for instance to \cite{Forster} for a proof of Lemma \ref{lifting_of_paths_and_homotopies}. The condition that $p\circ\gat=\ga$ implies that, given a point $y\in p^{-1}(\{x\})$ and a loop $\gamma$ at $x$, the element \begin{equation}\label{monodromy_action} \gamma\curvearrowright y := \gat(1)\end{equation} belongs to $p^{-1}(\{x\})$ and that $\gamma\curvearrowright y$ only depends on the homotopy class of $\ga$. The convention on path composition which we use in these notes is 
\begin{equation}\label{convention_on_paths}
(\ga \ast \ga')(t) = 
\left\{ 
\begin{array}{ccl}
\ga'(2t) & \mathrm{if} & 0\leq t \leq \frac{1}{2} \\
\ga(2t-1) & \mathrm{if} & \frac{1}{2} \leq t \leq 1
\end{array}
\right.
\end{equation}
\noindent (first travel $\ga'$, then $\ga$), so \eqref{monodromy_action} indeed defines a \textit{left} action of $\piC$ on the fibre at $x$ of any given topological cover of base $X$.
\begin{definition}[Monodromy action]\label{monodromy_action_def}
Given a topological cover $p:Y\lra X$ of base $X$, a point $x\in X$ and a point $y\in p^{-1}(\{x\})$, the left action $$\ga\curvearrowright y := \gat(1)$$ defined by lifting of loops at $x$ (i.e.\ $\gat$ is the unique path in $Y$ satisfying $p\circ \gat=\ga$ and $\gat(0)=y\in Y$) is called the \textbf{monodromy action}.
\end{definition}

\noindent The monodromy action is functorial in the following sense~: given $p:Y\lra X$ and $q:Z\to X$ two covers of $X$ and a homomorphism

$$
\xymatrix{
Y \ar[rr]^{\phi} \ar[rd]_p & & Z \ar[ld]^q \\
& X &
}
$$

\noindent between them, the induced map $$\phi_x: p^{-1}(\{x\}) \lra q^{-1}(\{x\})$$ is $\piC$-equivariant. Indeed, if $\gat$ satisfies $p\circ\gat=\ga$ and $\gat(0)=y\in p^{-1}(\{x\})$, then $\phi\circ\gat$ satisfies $$q\circ(\phi\circ\gat) = (q\circ\phi)\circ\gat = p \circ \gat = \ga$$ and $(\phi\circ\gat)(0) = \phi(y)$, so \begin{equation}\label{pi1_equivariance}\ga\curvearrowright \phi(y) = (\phi\circ\gat)(1) = \phi(\gat(1)) = \phi(\ga\curvearrowright y).\end{equation} This, in effect, turns $\piC$ into the \textit{automorphism group of the functor taking a cover of base $X$ to its fibre at $x$}. For a connected, Hausdorff and locally simply connected topological space $X$, however, the existence of a universal cover $\Xt$ simplifies matters much.

\begin{definition}[Universal cover]
Let $X$ be a connected topological space and let $p:\Xt\lra X$ be a topological covering map. It is called a \textbf{universal cover} for $X$ if $\Xt$ is connected and if, given $x\in X$ and $\xt\in \Xt$, the following universal property is satisfied~: for any \emph{connected} cover $q:Y\lra X$ and any choice of $y\in p^{-1}(\{x\})$, there exists a unique topological covering map $r:\Xt\lra Y$ such that $q\circ r = p$ and $q(\xt)=y$~:
$$
\xymatrix{
(\Xt,\xt) \ar[dd]_p \ar@{-->}[dr]^{\exists\, !\, r} & \\
& (Y,y) \ar[dl]^q \\
(X,x)
}
$$
\end{definition}

\noindent It is important to observe that the universal property above is one that is satisfied by the homomorphism of \textit{pointed} spaces  $p:(\Xt,\xt) \lra (X,x)$. We shall henceforth always assume that our universal covering maps are homomorphisms of pointed spaces, even if the notation does not explicitly reflect that a point $\xt\in p^{-1}(\{x\})$ has been chosen. We point out that the usual construction\footnote{Recall that part of the construction consists in endowing $\Xt$ with a topology making $p$ continuous, in which $\Xt$ is in fact connected -in particular, it can only cover other \textit{connected }coverings of $X$.} of $p:\Xt \lra X$ as the set of homotopy classes of paths $\eta: [0;1] \to X$ starting at $x$, 
$p$ being the map taking $\eta$ to $\eta(1)$, depends on the choice of $x\in X$ but then defines a distinguished point $\xt\in p^{-1}(\{x\})$, namely the homotopy class of the constant path at $x$. Moreover, there is a natural action of $\piC$ on $\Xt$ in this model, which is a \textit{right} action~: 
\begin{equation}\label{pi1_acts_on_universal_cover} 
\forall \eta\in\Xt,\ \forall \ga\in\piC,\ \eta \curvearrowleft \ga := \eta\ast \ga
\end{equation} 

\noindent (recall~: first travel $\ga$ then $\eta$, so $\eta\ast\ga$ is indeed a path starting at $x$ in $X$). Evidently, \eqref{pi1_acts_on_universal_cover} defines a right action of $\piC$ on $\Xt$ and it is an action by automorphisms of the covering map $p:\Xt\lra X$ since $$p(\eta\ast\ga) = (\eta\ast\ga)(1) = \eta(1) = p(\eta)$$ (recall that the group of automorphisms of a cover $p:\Xt\lra X$ is $$\Aut(\Xt/X) := \{ f:\Xt \lra \Xt\ |\ p\circ f = p\}$$ with group structure given by the composition of maps). Note that this construction defines a map from $\piC$ to $\Aut(\Xt/X)$. In Proposition \ref{usual_fund_gp}, we give a construction of that same map without using loops at $x$.

Because of its universal property, a \textit{pointed} universal cover $p:(\Xt,\xt)\lra (X,x)$, if it exists, is \textit{unique up to unique isomorphism}. It is a theorem that, if $p:\Xt \lra X$ is a topological covering map between Hausdorff spaces and if $\Xt$ is connected and simply connected, then $(\Xt,\xt)$ is a universal cover of $(X,x)$. Moreover, if $X$ is connected, Hausdorff and (semi-)locally simply connected (for instance, a connected topological manifold), the space $\Xt$ whose construction was sketched above is connected, Hausdorff and simply connected, so the original space $X$ in particular admits a universal cover (see for instance \cite{Forster}). We now drop this particular model of $\Xt$ and simply assume the existence of a universal cover $p:(\Xt,\xt)\lra (X,x)$, as well as knowledge of the fact that $\Xt$ is simply connected. This enables us to describe $\piC$ as a subgroup of $\Aut(\Xt)^{\op}$ (the group of self-homeomorphisms of $\Xt$ where the group structure is given by $f\cdot g = g\circ f$), which is simpler than entering the formalism of automorphism groups of functors. We recall that if $G$ is a group with composition law $\cdot$ then $G^{\op}$ is the group structure on the underlying set of $G$ obtained by decreeing that the product $g\cdot_{\op} h$ of $g$ and $h$ in $G^{op}$ is equal to $h\cdot g$. It is called the \textit{opposite group} of $G$.

\begin{proposition}\label{usual_fund_gp}
By the universal property of the universal cover, given $\gamma\in\piC$, there exists a unique $f:\Xt\lra \Xt$ covering $\Id_X$ on $X$ such that $f(\xt) = \ga\curvearrowright\xt$ (the monodromy action defined in \eqref{monodromy_action}). This in fact defines a group isomorphism
\begin{equation}\label{morphism_to_the_opposite_autom_gp}
\Psi:\begin{array}{ccc}
\piC & \overset{\simeq}{\lra} & \Aut(\Xt/X)^{\op}\\
\ga & \lmt & f
\end{array}.
\end{equation}
\end{proposition}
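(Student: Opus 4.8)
The plan is to construct $\Psi$ directly from the universal property and then verify, in order, that it is well defined, a group homomorphism into the \emph{opposite} automorphism group, injective, and surjective. Fix $\gamma\in\piC$. The monodromy action \eqref{monodromy_action} sends $\xt$ to a point $\gamma\curvearrowright\xt\in p^{-1}(\{x\})$, which is therefore an admissible base point for the universal property of $(\Xt,\xt)$ applied to the \emph{connected} cover $p\colon\Xt\to X$ itself: there is a unique topological covering map $f_\gamma\colon\Xt\to\Xt$ with $p\circ f_\gamma=p$ and $f_\gamma(\xt)=\gamma\curvearrowright\xt$. Since $p\circ f_\gamma=p$, this $f_\gamma$ covers $\Id_X$, and I set $\Psi(\gamma):=f_\gamma$; taking $\gamma$ to be the trivial class, whose lift is a loop, forces $\Psi(1)=\Id_{\Xt}$ by the uniqueness clause.

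Next I would establish the composition rule $f_{\gamma\delta}=f_\delta\circ f_\gamma$ for all $\gamma,\delta\in\piC$. By uniqueness in the universal property it suffices to check that $f_\delta\circ f_\gamma$ covers $\Id_X$ --- immediate, as $p\circ f_\delta=p\circ f_\gamma=p$ --- and that $(f_\delta\circ f_\gamma)(\xt)=(\gamma\delta)\curvearrowright\xt$. For the latter, $(f_\delta\circ f_\gamma)(\xt)=f_\delta(\gamma\curvearrowright\xt)$; since $f_\delta$ is a morphism of covers of $X$, the equivariance \eqref{pi1_equivariance} gives $f_\delta(\gamma\curvearrowright\xt)=\gamma\curvearrowright f_\delta(\xt)=\gamma\curvearrowright(\delta\curvearrowright\xt)$, and this equals $(\gamma\delta)\curvearrowright\xt$ because \eqref{monodromy_action} is a \emph{left} action. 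Applying this rule with $(\gamma,\delta)=(\gamma,\gamma^{-1})$ and $(\gamma^{-1},\gamma)$ yields $f_{\gamma^{-1}}\circ f_\gamma=f_\gamma\circ f_{\gamma^{-1}}=\Psi(1)=\Id_{\Xt}$, so each $f_\gamma$ is a homeomorphism; hence $\Psi$ takes values in $\Aut(\Xt/X)$, and the composition rule reads $\Psi(\gamma\delta)=\Psi(\delta)\circ\Psi(\gamma)=\Psi(\gamma)\cdot_{\op}\Psi(\delta)$, i.e.\ $\Psi$ is a group homomorphism to $\Aut(\Xt/X)^{\op}$.

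For injectivity, suppose $\Psi(\gamma)=\Id_{\Xt}$; then $\gamma\curvearrowright\xt=f_\gamma(\xt)=\xt$, so by Lemma \ref{lifting_of_paths_and_homotopies} the lift $\gat$ of a loop representing $\gamma$ with $\gat(0)=\xt$ is a loop at $\xt$ in $\Xt$. As $\Xt$ is simply connected, $\gat$ is null-homotopic, hence so is $\gamma=p\circ\gat$, i.e.\ $\gamma=1$ in $\piC$. For surjectivity, let $f\in\Aut(\Xt/X)$; then $f(\xt)\in p^{-1}(\{x\})$, and choosing a path $\gat$ from $\xt$ to $f(\xt)$ in $\Xt$ (connected and locally path-connected, hence path-connected) and setting $\gamma:=p\circ\gat$, a loop at $x$, we obtain $\gamma\curvearrowright\xt=\gat(1)=f(\xt)$, so $f=f_\gamma=\Psi(\gamma)$ by uniqueness. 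This completes the proof that $\Psi$ is an isomorphism.

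The only genuinely delicate point is the handedness bookkeeping in the second step: the monodromy action of $\piC$ on the fibre is a left action, whereas the natural composition law on deck transformations makes $\gamma\mapsto f_\gamma$ multiplicative only after passing to the opposite group, and reconciling the two is exactly what the functoriality \eqref{pi1_equivariance} of the monodromy action is for. Everything else is a routine application of the universal property of $(\Xt,\xt)$ together with path lifting and the simple connectedness of $\Xt$.
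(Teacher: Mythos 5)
Your proof is correct and takes essentially the same approach as the paper's: you construct $\Psi$ via the universal property, use the $\piC$-equivariance \eqref{pi1_equivariance} of morphisms of covers to verify the homomorphism property into the opposite group, and use simple connectedness of $\Xt$ for the bijectivity. The only organizational difference is that you check injectivity and surjectivity separately (and explicitly verify that each $f_\gamma$ is a homeomorphism via the composition rule), whereas the paper exhibits the inverse map $f\mapsto[p\circ\zeta]$ directly; the underlying content is identical.
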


\begin{proof}
The map is well-defined and we first check that it is a group homomorphism, then we give an inverse. Denote $\Psi(\ga)=f$ and $\Psi(\ga')=g$. By definition, $\Psi(\ga\ast\ga')$ is the unique $h:\Xt\lra\Xt$ covering $\Id_X$ on $X$ such that $h(\xt)= (\ga\ast\ga')\curvearrowright \xt$. But $$(\ga\ast\ga')\curvearrowright\xt = \ga \curvearrowright (\ga'\curvearrowright\xt) = \ga \curvearrowright g(\xt)$$ so, as $g$ is $\piC$-equivariant for the monodromy action (see \eqref{pi1_equivariance}), $$ h(\xt) = \ga \curvearrowright g(\xt) = g(\ga \curvearrowright \xt) = g \big(f(\xt)\big) = (g\circ f)(\xt) = (f\cdot g)(\xt).$$ By uniqueness of such a map, $$\Psi(\ga\ast\ga') = f\cdot g = \Psi(\ga) \cdot \Psi(\ga')$$ so $\Psi:\piC\lra \Aut(\Xt/X)^{\op}$ is a group homomorphism. Next, we consider the map taking $f\in\Aut(\Xt/X)^{\op}$ to the homotopy class of $\ga:= p \circ\zeta$, where $\zeta$ is any path between $\xt$ and $f(\xt)\in p^{-1}(\{x\})$~: since $\Xt$ is simply connected, the homotopy class of $\zeta$ is uniquely determined and the homotopy class of $\ga$ is a well-defined element of $\piC$ that does not depend on the choice of $\zeta$. Moreover, $\zeta$ is a lifting of $\ga$ starting at $\xt$, so $\ga\curvearrowright \xt = \zeta(1)=f(\xt)$, which readily proves that this construction provides an inverse to $\Psi$.
\end{proof}

If one accepts the existence of a simply connected universal cover, the result of Proposition \ref{usual_fund_gp} may be used as a definition of $\piC$. The necessity to consider $\Aut(\Xt/X)^{\op}$ instead of $\Aut(\Xt/X)$ springs from the convention that we chose in \eqref{convention_on_paths} for composition of paths, which is in fact dictated by the theory of local systems (see \cite{Szamuely}, Remark 2.6.3). In particular, $\Aut(\Xt/X)^{\op}$ acts \textit{to the right} on $\Xt$. We now show that it acts \textit{to the left} on the fibre $q^{-1}(\{x\})$ of \textit{any} cover $q:Y\lra X$. This only uses the universal property of $p:(\Xt,\xt) \lra (X,x)$ and provides a description of the monodromy action \eqref{monodromy_action} making no use of loops at $x$.

\begin{proposition}\label{monodromy_action_bis}
Let $q:Y\lra X$ be a cover of $X$ and let $y$ be an element in $q^{-1}(\{x\})$. By the universal property of the universal cover $p:(\Xt,\xt) \lra (X,x)$, there exists a unique map $t_y:\Xt \lra Y$ 

$$
\xymatrix{
(\Xt,\xt) \ar[dr]^{t_y} \ar[dd]_p & \\
& (Y,y) \ar[dl]^q \\
(X,x) & 
}
$$

\noindent such that $q\circ t_y = p$ and $t_y(\xt)=y$. Given $f\in\Aut(\Xt/X)^{\op}$, let us set \begin{equation}\label{monodromy_action_without_loops} f\car y := (t_y\circ f)(\xt).\end{equation} This defines a left action of $\Aut(\Xt/X)^{\op}$ on $\fibreq$. Moreover, if $\phi:Y\lra Z$ is a homomorphism between two covers $q:Y\lra X$ and $r:Z\lra X$ of $X$, then the induced map $$\phi_x: \fibreq \lra r^{-1}(\{x\})$$ is $\Aut(\Xt/X)^{\op}$-equivariant.\\ Finally, $$f\car y = \ga \car y$$ where $\ga\in \piC$ satisfies $\Psi(\ga)=f$ under the isomorphism of Proposition \ref{usual_fund_gp} and $\ga\car y$ denotes the monodromy action defined in \eqref{monodromy_action}.
\end{proposition}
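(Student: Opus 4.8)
The plan is to verify the four assertions of Proposition~\ref{monodromy_action_bis} in turn: well-definedness of the formula \eqref{monodromy_action_without_loops}, the fact that it is a left action, equivariance under homomorphisms of covers, and finally the comparison with the monodromy action \eqref{monodromy_action}. First I would check that $f\car y$ does land in $\fibreq$: since $q\circ t_y=p$ and $f$ covers $\Id_X$ (i.e.\ $p\circ f=p$), we get $q\big((t_y\circ f)(\xt)\big)=p\big(f(\xt)\big)=p(\xt)=x$, so $(t_y\circ f)(\xt)\in\fibreq$. The map $t_y$ itself exists and is unique by the universal property of $(\Xt,\xt)\lra(X,x)$ applied to the connected cover $q$ (one should note $Y$ may be assumed connected, or one restricts to the connected component of $y$, which is what the universal property needs).

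Next I would check the left-action axioms. For the identity $\Id_{\Xt}\in\Aut(\Xt/X)^{\op}$ we have $\Id_{\Xt}\car y=(t_y\circ\Id_{\Xt})(\xt)=t_y(\xt)=y$. For composition, take $f,g\in\Aut(\Xt/X)^{\op}$; in $\Aut(\Xt/X)^{\op}$ the product $f\cdot_{\op}g$ equals $g\circ f$ as maps $\Xt\lra\Xt$. The key observation is that $t_{g\car y}=t_y\circ g$: indeed $t_y\circ g$ covers $p$ (since $g$ covers $\Id_X$) and sends $\xt$ to $(t_y\circ g)(\xt)=g\car y$, so by the uniqueness clause of the universal property it must equal $t_{g\car y}$. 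Therefore
\begin{equation*}
(f\cdot_{\op}g)\car y=(t_y\circ(g\circ f))(\xt)=\big((t_y\circ g)\circ f\big)(\xt)=(t_{g\car y}\circ f)(\xt)=f\car(g\car y),
\end{equation*}
which is exactly the left-action identity. For equivariance under a homomorphism $\phi:Y\lra Z$ of covers, I would note that $\phi\circ t_y:\Xt\lra Z$ covers $p$ and sends $\xt$ to $\phi(y)$, so by uniqueness $\phi\circ t_y=t_{\phi(y)}$; hence $\phi(f\car y)=\phi\big((t_y\circ f)(\xt)\big)=(t_{\phi(y)}\circ f)(\xt)=f\car\phi(y)$.

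For the final comparison statement, the cleanest route is to apply the equivariance just proved to the homomorphism $t_y:(\Xt,\xt)\lra(Y,y)$ itself, viewed as a homomorphism between the cover $p:\Xt\lra X$ and the cover $q:Y\lra X$. On one hand, by Proposition~\ref{usual_fund_gp} the map $\Psi(\ga)=f$ is characterized by $f(\xt)=\ga\car\xt$ (monodromy on the fibre of $p$), so $f\car\xt$ in the sense of \eqref{monodromy_action_without_loops} applied to the cover $p$ equals $(t_{\xt}\circ f)(\xt)$ where $t_{\xt}=\Id_{\Xt}$, i.e.\ $f\car\xt=f(\xt)=\ga\car\xt$. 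On the other hand, the monodromy action \eqref{monodromy_action} is functorial via \eqref{pi1_equivariance}, so $(t_y)_x:\fibre\lra\fibreq$ satisfies $\ga\car t_y(\xt)=t_y(\ga\car\xt)$, i.e.\ $\ga\car y=t_y(f(\xt))=(t_y\circ f)(\xt)=f\car y$. This identifies the two actions and completes the proof.

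I expect the main obstacle to be purely a matter of bookkeeping rather than conceptual difficulty: one must be careful that the universal property is only stated for \emph{connected} target covers, so to apply it to an arbitrary cover $q:Y\lra X$ one should either assume $Y$ connected or replace $Y$ by the connected component of $y$ (the monodromy action preserves that component, so nothing is lost), and one must keep straight the distinction between $\Aut(\Xt/X)$ and its opposite group $\Aut(\Xt/X)^{\op}$ so that the composition $g\circ f$ versus $f\circ g$ comes out on the correct side at every step. Once the identity $t_{g\car y}=t_y\circ g$ (and its cousin $\phi\circ t_y=t_{\phi(y)}$) is established by the uniqueness clause, every remaining verification is a one-line diagram chase.
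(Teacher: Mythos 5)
Your proof is correct and follows essentially the same route as the paper: you verify membership in the fibre, establish $t_{g\car y}=t_y\circ g$ and $\phi\circ t_y=t_{\phi(y)}$ by the uniqueness clause of the universal property, use these for the left-action law and equivariance, and deduce the comparison with the loop-based monodromy from $f(\xt)=\ga\car\xt$ together with $\piC$-equivariance of $t_y$. The small extras you add (checking the identity axiom explicitly, and the remark that one may pass to the connected component of $y$ to invoke the universal property for a possibly disconnected cover) are sound and not in the paper's proof, but they do not change the argument.
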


\begin{proof}
Let us first observe that $f\car y$ indeed belongs to $q^{-1}(\{x\})$. We compute~: 
\begin{equation}\label{action_on_fibre}
q(f\car y) = q\big((t_y\circ f)(\xt)\big) = (q\circ t_y) \circ f(\xt) = (p\circ f)(\xt) = p(\xt) = x.
\end{equation}

\noindent Moreover, one has $t_y\circ f = t_{f\car y}$. Indeed, $$q\circ (t_y\circ f) = (q\circ t_y) \circ f = p\circ f = p$$ and $(t_y\circ f)(\xt) = f\car y$ by definition of the latter, so $t_y\circ f = t_{f\car y}$ by uniqueness of the map $t_{f\car y}$. This in particular implies that 

$$
\begin{array}{ccccc}
(f\cdot g)\car y & = & (g\circ f) \car y & = & \big[t_y\circ (g\circ f)\big] (\xt) \\
& = & \big[(t_y\circ g) \circ f \big] (\xt) & = & \big[ t_{g\car y} \circ f \big] (\xt)\\ 
& = & f \car (g\car y) & &
\end{array}
$$ 

\noindent so we indeed have a left action of $\Aut(\Xt/X)^{\op}$ on $\fibreq$. The equivariance of $\phi_x:\fibreq \lra r^{-1}(\{x\})$ will follow from the commutativity of the diagram 

$$
\xymatrix{
& \Xt \ar[dl]_{t_y} \ar[dr]^{t_{\phi(y)}} & \\
Y \ar@{-->}[rr]^{\phi}\ar[dr]_{q} & & Z \ar[dl]^r \\
& X& 
}
$$

\noindent We want to show that $\phi\circ t_y = t_{\phi(y)}$ (note that $r\circ t_{\phi(y)} = q \circ t_y$ because both are equal to $p:\Xt\lra X$ and that $r\circ \phi=q$ because $\phi$ is, by assumption, a homomorphism of covers). Indeed, $(\phi\circ t_y)(\xt)= \phi(t_y(\xt))= \phi(y) = t_{\phi(y)}(\xt)$ and $$r\circ (\phi\circ t_y) = (r\circ \phi)\circ t_y = q\circ t_y =p$$ so 
\begin{equation}\label{morphism_and_action}
\phi \circ t_y =t_{\phi(y)}
\end{equation}

\noindent by uniqueness of $t_{\phi(y)}$. As as consequence, $$\phi(f\car y) = \phi\big((t_y\circ f)(\xt)\big) = \big(\phi\circ t_y\big)\big(f(\xt)\big) = (t_{\phi(y)}\circ f)(\xt)=f\car \phi_(y)$$ so $\phi_x:=\phi|_{\fibreq}$ is indeed $\Aut(\Xt/X)^\op$-equivariant.\\ Then, by Proposition \ref{usual_fund_gp}, if $f=\Psi(\ga)$, one has $f(\xt) = \ga\car \xt$. Since $t_y$ is $\piC$-equivariant for the monodromy action (see \eqref{pi1_equivariance}), one therefore has $$f\car y = (t_y \circ f) (\xt) = t_y(\ga\car \xt) = \ga\car t_y(\xt) = \ga\car y.\qedhere$$
\end{proof}

\noindent A relation between the left action of $\Aut(\Xt/X)^\op$ on the fibre at $x$ of the universal cover $p:\Xt\lra X$ and the right action of $\Aut(\Xt/X)^\op$ on $\Xt$ is given in Exercise \ref{left_and_right_on_Xt}.

In what follows, we will continue to take for granted the existence of a simply connected universal cover for $X$ and mainly think of $\piC$ as the group $\Aut(\Xt/X)^{\op}$, the point being that it acts \textit{to the right} on $\Xt$ and \textit{to the left} on the fibre at $x$ of any cover of $X$. The notation $\piC$ is better for us than $\Aut(\Xt/X)^\op$ because it reflects the choice of a base point in $X$, which will be particularly important when considering real curves (see Propositions \ref{SDP_structure} and \ref{description_in_terms_of_paths}). From this point on, we will denote by $f$ an element of $\piC$ and write $y\cal f$ for the action of $f$ on an element $y\in\Xt$. With this in mind, we can now construct a cover of $X$ \textit{out of any left $\piC$-set $F$}, simply by setting $$Y:= \Xt\times_{\piC} F := \piC \bs (\Xt \times F),$$ the quotient space of  the left $\piC$-action on $\Xt\times F$ defined, for any $f\in\piC$ and any $(y,v) \in \Xt\times F$, by $$f \car (y,v) = (y\cal f^{-1}, f\car v).$$ This construction provides a quasi-inverse to the fibre functor (of which $\piC$ is the automorphism group) and this yields the following Galois-theoretic approach to topological covering spaces, first stated this way by Grothendieck (\cite{SGA1}).

\begin{theorem}[Galois theory of topological covering spaces]\label{Galois_theory_of_covers}
Let $X$ be a connected, Hausdorff and semi-locally simply connected topological space. The category $\Cov_X$ of Hausdorff topological covering spaces of $X$ is equivalent to the category of discrete left $\piC$-sets. An explicit pair of quasi-inverse functors is provided by the fibre-at-$x$ functor 

$$\Fib_x: \begin{array}{ccc}
\Cov_X & \lra & \piC-\Sets \\
(q: Y\lra X) & \lmt & q^{-1}(\{x\}) 
\end{array}
$$

\noindent and the functor

$$Q: \begin{array}{ccc}
 \piC-\Sets & \lra & \Cov_X  \\
F & \lmt & \Xt \times_{\piC} F
\end{array}\, .
$$

\noindent The map $$\begin{array}{ccc}
\{\mathrm{subgroups\ of}\ \piC\} & \lra & \{\mathrm{transitive}\ \piC-\Sets\} \\
H & \lmt & F:=\piC/H
\end{array}$$ sets up a bijection between subgroups of $\piC$ and connected covers of $X$. Under this bijection, Galois covers of $X$ (i.e\,\,connected covers $q:Y\lra X$ for which the action of $\Aut(Y/X)$ on any given fibre of $q$ is transitive) correspond to normal subgroups of $\piC$.
\end{theorem}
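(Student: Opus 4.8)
The plan is to exhibit $\Fib_x$ and $Q$ as a pair of quasi-inverse functors by producing natural isomorphisms $\Fib_x\circ Q\cong\Id$ and $Q\circ\Fib_x\cong\Id$, and then to obtain the statements about subgroups and Galois covers by transporting them through this equivalence. Everything rests on two structural facts about the universal cover, both already available. First, identifying $\piC$ with $\Aut(\Xt/X)^{\op}$ via Proposition~\ref{usual_fund_gp}, this group acts \emph{freely} on $\Xt$ (an automorphism of $\Xt$ over $X$ fixing a point is the identity, since $\Xt$ is connected and lifts are unique) and \emph{transitively} on the fibre $p^{-1}(\{x\})$ (for $\xt'\in p^{-1}(\{x\})$ the map $t_{\xt'}$ of Proposition~\ref{monodromy_action_bis} is an automorphism of $\Xt$ over $X$ carrying $\xt$ to $\xt'$); hence $p^{-1}(\{x\})$ is a free transitive right $\piC$-set. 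Second, $\Fib_x(Y)=q^{-1}(\{x\})$ carries the monodromy left $\piC$-action (Definition~\ref{monodromy_action_def}), and morphisms of covers induce $\piC$-equivariant maps on fibres (Proposition~\ref{monodromy_action_bis}), so $\Fib_x$ is a well-defined functor to discrete $\piC$-sets (discrete because fibres of covering maps are discrete), and $Q$ is visibly a functor on morphisms.

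Next I would check that $Q(F)=\Xt\times_{\piC}F$ really is an object of $\Cov_X$: over an evenly covered open $U\subseteq X$ there is an equivariant homeomorphism $p^{-1}(U)\cong U\times\piC$, whence $\bar q^{-1}(U)\cong\piC\backslash(U\times\piC\times F)\cong U\times F$, a trivial cover of $U$ because $F$ is discrete; this local triviality of $\bar q$ over a cover of $X$ by such $U$ also forces $Q(F)$ to be Hausdorff. For $\Fib_x\circ Q\cong\Id$: the fibre of $Q(F)$ over $x$ is $\{[(y,v)]:y\in p^{-1}(\{x\}),\,v\in F\}$, and since $p^{-1}(\{x\})$ is a free transitive right $\piC$-set the normalization $y=\xt$ is unique, so $v\mapsto[(\xt,v)]$ is a bijection $F\overset{\simeq}{\to}\Fib_x(Q(F))$; it is $\piC$-equivariant because, computing the monodromy action through the lift $z\mapsto[(z,v)]$ of $p$ (which is the map $t_{y_0}$ of Proposition~\ref{monodromy_action_bis} for $y_0=[(\xt,v)]$), one gets $f\car[(\xt,v)]=[(\xt\cal f,v)]=[(\xt,f\car v)]$. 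For $Q\circ\Fib_x\cong\Id$: given $q:Y\to X$, the assignment $(z,y)\mapsto t_y(z)$ on $\Xt\times q^{-1}(\{x\})$ is $\piC$-invariant since $t_{f\car y}=t_y\circ f$ (Proposition~\ref{monodromy_action_bis}) and $f(z\cal f^{-1})=z$, hence descends to a morphism of covers $Q(\Fib_x Y)\to Y$; it is bijective (surjective because $q\circ t_y=p$ is onto the component of $y$, injective again via $t_{f\car y}=t_y\circ f$ and uniqueness of lifts) and a local homeomorphism, hence an isomorphism, and naturality in $Y$ follows from $\phi\circ t_y=t_{\phi(y)}$ (see \eqref{morphism_and_action}).

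It remains to deduce the corollary. A cover $q:Y\to X$ is connected if and only if the monodromy action on $q^{-1}(\{x\})$ is transitive: a connected cover of the locally path-connected space $X$ is path-connected, so any two points over $x$ are joined by a path projecting to a loop; conversely each connected component of $Y$ meets $q^{-1}(\{x\})$ in a monodromy-invariant subset, so transitivity allows only one component. A transitive $\piC$-set together with a base point is isomorphic to $\piC/H$ for $H$ the stabilizer, so under the equivalence a subgroup $H\le\piC$ corresponds to the pointed connected cover $Q(\piC/H)\cong\Xt/H$ (and forgetting the base point replaces $H$ by a conjugate), which is the asserted bijection. Finally the equivalence identifies $\Aut(Y/X)$ with $\Aut_{\piC\text{-}\Sets}(\Fib_x Y)$; for $\Fib_x Y\cong\piC/H$ the group $\Aut_{\piC}(\piC/H)$ consists of the right translations $gH\mapsto gnH$ with $n\in N_{\piC}(H)$, and this acts transitively on $\piC/H$ exactly when $N_{\piC}(H)=\piC$, i.e.\ when $H\triangleleft\piC$; so Galois covers correspond to normal subgroups.

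The hard part will be the isomorphism $Q\circ\Fib_x\cong\Id$, i.e.\ recovering a cover from its monodromy set, because it requires gluing the universal-property maps $t_y$ into a single global map and then verifying this is a homeomorphism at the level of topology (local triviality and Hausdorffness of $Q(F)$), together with the minor point that the universal property of $\Xt$, although stated for connected covers, must be applied component by component to an arbitrary cover. The remainder is bookkeeping, the only real pitfall being consistency of left versus right actions, the opposite group structure, and left cosets versus their conjugates.
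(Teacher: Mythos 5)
The paper does not actually prove this theorem: immediately after stating it, the text says ``We refer for instance to \cite{Szamuely} for a proof of Theorem \ref{Galois_theory_of_covers}.'' So there is no paper proof to compare against, only the question of whether your sketch is sound, and it is: you build $\Fib_x$ and $Q$ into a pair of quasi-inverse functors using only the two structural facts that $\piC=\Aut(\Xt/X)^{\op}$ acts freely on $\Xt$ and simply transitively on $p^{-1}(\{x\})$, and you then read off the subgroup/cover and normal/Galois dictionaries from the equivalence. You are careful about exactly the points that usually go wrong here: the left-versus-right and opposite-group conventions; the fact that the universal property of $(\Xt,\xt)$ is stated only for connected covers and must be invoked component by component (e.g.\ when defining $t_{y_0}$ for $y_0$ in the possibly disconnected $Q(F)$); the verification that $Q(F)$ is a Hausdorff covering space; and the caveat that the asserted bijection is really between subgroups and \emph{pointed} connected covers, with conjugacy classes of subgroups matching isomorphism classes of unpointed connected covers.

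The only place I would expand is the injectivity of the counit $Q(\Fib_x Y)\lra Y$: the phrase ``uniqueness of lifts'' does not by itself deliver it, since $t_y$ need not be injective. The clean route, consistent with what you wrote, is to restrict to a connected component $Y_0\ni y$, note that $t_y:\Xt\lra Y_0$ is itself a universal (hence regular) covering with deck group exactly $\mathrm{Stab}_{\piC}(y)$ (the stabilizer under the monodromy action, as follows from $t_{f\car y}=t_y\circ f$), and then identify the restricted counit with the canonical isomorphism $\Xt/\mathrm{Stab}_{\piC}(y)\overset{\simeq}{\lra}Y_0$. That also gives a painless route to $Q(\piC/H)\cong\Xt/H$ and to the identification $\Aut(Y/X)\cong N_{\piC}(H)/H$ used in the last step.
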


\noindent In particular, a cover $q:Y\lra X$ is connected if and only if $q^{-1}(\{x\})$ is a transitive $\piC$-set and it is Galois if and only if $q^{-1}(\{x\})$ is a transitive $\piC$-set which admits a compatible group structure (i.e.\ a group structure which turns this set into a quotient group of $\piC$). We refer for instance to \cite{Szamuely} for a proof of Theorem \ref{Galois_theory_of_covers}. Note that the important part is knowing that the functor $\Fib_x$ establishes an equivalence of categories between covers of $X$ and left $\piC$-sets and that the universal cover is only being used, really, to produce a quasi-inverse to $\Fib_x$. Indeed, this is exactly what generalizes to covers of real curves, as we shall see in Subsection \ref{fund_gp_Klein_surface}. Another interesting point is that if one considers the restriction of $\Fib_x$ to the full subcategory of \textit{connected} Hausdorff covers of $X$, then by the universal property of $\Xt$, we have $\Fib_x(Y) = \Hom_{\Cov_X}(\Xt;Y)$. In other words, the functor $\Fib_x$ is representable by the universal cover $(\Xt\,\xt)$ (see Exercise \ref{represent_the_fibre_functor}).

Finally, if $X$ is a Riemann surface and $p:Y\lra X$ is a topological covering map with $Y$ a Hausdorff space, there exists a unique holomorphic structure on $Y$ turning $p$ into a holomorphic map (see for instance \cite{Forster} or \cite{Szamuely}). So \textit{the category of Hausdorff topological covers of a connected Riemann surface $X$ is equivalent to the category of holomorphic covering spaces of $X$}. In particular, $X$ admits a universal cover \textit{in the category of Riemann surfaces} and the topological fundamental group $\piC=\Aut(\Xt/X)^\op$ acts on $\Xt$ by holomorphic transformations, \textit{classifying holomorphic covers of $X$ in the sense of Theorem \ref{Galois_theory_of_covers}}. We shall see later that this point of view has an exact analogue for Klein surfaces (Subsection \ref{Galois_over_R}).

\subsection{The fundamental group of a Klein surface}\label{fund_gp_Klein_surface}

By Theorem \ref{Galois_theory_of_covers}, the fundamental group $\piC=\Aut(\Xt/X)^{\op}$ of a Riemann surface behaves like a Galois group for the category of complex analytic covers of $X$. The fundamental group of a Klein surface $(X,\si)$, whose definition we give in the present subsection (Definition \ref{def_fund_gp_real_curve}), plays the same role \textit{vis-\`a-vis} the category of real covering maps of base $(X,\si)$. In what follows, we denote by $\Aut^\pm(X)$ the group of holomorphic and anti-holomorphic transformations of a given Riemann surface $X$ (this is indeed a group; if $X$ has a real structure $\si$, it is isomorphic to the semi-direct product $\Aut(X)\rtimes_{\si}\Z/2\Z$, where $\Z/2\Z=\{\Id_X;\si\}$ acts on the group $\Aut(X)$ of holomorphic transformations of $X$ by $\si\car f = \si\circ f \circ \si^{-1}$, see Exercise \ref{hol_and_anti_hol_bijections}). The fundamental group of $(X,\si)$ will be a subgroup of $(\Aut^\pm(\Xt))^\op$ but will not always be a semi-direct product and, at any rate, not canonically (for instance, the semi-direct product structure defined in Proposition \ref{SDP_structure} depends on the choice of the base point\footnote{In fact, it only depends on the connected component of $X^\si$ containing that real point, see Exercise \ref{isomorphic_SDP_structures}.}). We begin by studying the category of covers of $(X,\si)$ for which we want to set up a Galois theory.

\begin{definition}[Real covers and their homomorphisms]
A \textbf{real cover of a Klein surface $(X,\si)$} is a homomorphism of Klein surfaces $q:(Y,\tau) \lra (X,\si)$ (i.e.\ a holomorphic map between $Y$ and $X$ satisfying $q\circ\tau = \si \circ q$) which is also a topological covering map.\\ A \textbf{homomorphism of real covers of $X$} is a homomorphism of Klein surfaces $\phi:(Y,\tau) \lra (Y',\tau')$ which is also a homomorphism of covers of $X$ (i.e.\ a holomorphic map satisfying $q'\circ\phi = q$, where $q:(Y,\tau)\lra (X,\si)$ and $q':(Y',\tau') \lra (X,\si)$).
\end{definition}

\noindent The condition that $\phi:(Y,\tau)\lra (Y,\tau')$ be a homomorphism of real covers of $(X,\si)$ is equivalent to the commutativity of the following diagram

$$
\xymatrix{
Y\ar[rrr]_{\tau} \ar[dr]^{\phi} \ar[ddr]_q & & & Y \ar[ld]_{\phi} \ar[ldd]^q  \\
& Y' \ar[d]^{q'} \ar[r]_{\tau'} & Y' \ar[d]_{q'} & \\
& X \ar[r]_\si & X & 
}
$$

\noindent or, more compactly, 

$$
\xymatrix{
(Y,\tau) \ar[rr]^{\phi}\ar[dr]_{q} & & (Y',\tau') \ar[dl]^{q'} \\
& (X,\si). & 
}
$$

A natural question to ask is whether there exists a group $\pi$ such that the category of real covers of $(X,\si)$ is equivalent to the category of discrete left $\pi$-sets, on the model of Theorem \ref{Galois_theory_of_covers}. Given a real cover $q:(Y,\tau) \lra (X,\si)$, we already know, by Proposition \ref{monodromy_action_bis}, that $\piC$ acts on $\fibreq$~: if $y$ is any element in $\fibreq$, then there exists a unique map $$t_y:(\Xt,\xt) \lra (Y,y)$$ such that $q\circ t_y = p$, where $p:(\Xt,\xt)\lra (X,x)$ is a fixed universal cover of $X$. Now, since $q:(Y,\tau)\lra (X,\si)$ is a homomorphism of Klein surfaces, one has a bijection of discrete sets $$\tau|_{\fibreq}: \fibreq \overset{\simeq}{\lra} q^{-1}(\{\si(x)\})$$ \textit{and this bijection can be used to define an action on $\fibreq$ of a group strictly larger than $\piC$}. Indeed, assume that $f:\Xt\lra \Xt$ is a self-diffeomorphism of $\Xt$ which covers not $\Id_X$ but the real structure $\si$ in the following sense~:

$$
\begin{CD}
\Xt @>f>> \Xt \\
@VVpV @VVpV \\
X @>\si>> X.
\end{CD}
$$

\noindent Then $$q\big((t_y\circ f)(\xt)\big) = \big((q\circ t_y)\circ f\big)\big(\xt\big)=(p\circ f)(\xt) = (\si\circ p)(\xt) = \si(x)$$ so $(t_y\circ f)(\xt) \in q^{-1}(\{\si(x)\})$ and therefore 
\begin{equation}\label{action_on_the_fibre_in_real_case}
f\car y := (\tau^{-1} \circ t_y \circ f) (\xt)\in q^{-1}(\{x\}).
\end{equation}

\noindent In other words, the presence of a real structure $\tau$ on $Y$, compatible with the projection to $(X,\si)$, implies that the group
\begin{equation}\label{real_pi1}
\piR := \{ f:\Xt\lra\Xt\ |\ \exists\,\si_f\in\{\Id_X;\si\},\, p\circ f = f\circ \si_f\}^\op \subset \Aut^\pm(\Xt)^\op
\end{equation}

\noindent acts to the left on the fibre of $q$ at $x$. It is easy to see, but we will prove below, that this group indeed contains $\piC$ as the subgroup of elements $f$ for which $\si_f=\Id_X$ (in particular, this is a normal subgroup since, if $g$ covers $\mu\in\{\Id_X;\si\}$, then $g^{-1}\circ f \circ g$ covers $\mu^{-1} \circ \Id_X \circ \mu=\Id_X$). An alternate definition of $\piR$ is 
\begin{equation}\label{real_pi1bis}
\piR \quad = \bigsqcup_{\mu  \in\{\Id_X;\si\}} \{f:\Xt \lra \Xt\ |\ p\circ f = \mu \circ p\}.
\end{equation}

\noindent A few things need to be checked here and we formulate two separate results.

\begin{proposition}\label{piR_is_a_group}
The set $\piR$ defined in \eqref{real_pi1} or \eqref{real_pi1bis} is a subgroup of $\Aut^\pm(\Xt)^\op$~: $f\in\piR$ acts holomorphically on $\Xt$ if $\si_f=\Id_X$ and anti-holomorphically if $\si_f=\si$. There is a well-defined group homomorphism 
\begin{equation}\label{augmentation}
\alpha:
\begin{array}{ccc}
\piR & \lra & \{\Id_X;\si\}^\op \simeq \Z/2\Z \\
f & \lmt & \si_f
\end{array}
\end{equation}
\noindent whose kernel is $$\ker\alpha = \{f:\Xt\lra \Xt\ |\ p\circ f = p\}^\op = \piC.$$ In particular, $\piC$ is a normal subgroup of $\piR$.
\end{proposition}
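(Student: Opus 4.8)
The plan is to pin down the datum $\si_f$ attached to an element $f\in\piR$, then to verify that $\piR$ is closed under the group operations of $\Aut^\pm(\Xt)^\op$, and finally to read off $\alpha$, its kernel, and the normality of $\piC$; only the middle step requires anything beyond elementary bookkeeping with the two-element group $\{\Id_X;\si\}$. First, the conditions imposed in \eqref{real_pi1} and in \eqref{real_pi1bis} are literally the same --- that $f$ be a lift of $\si_f\circ p$, respectively $\mu\circ p$, through $p$ --- so the two descriptions of $\piR$ coincide, and $\si_f$ is uniquely determined by $f$: if $\si_f\circ p=p\circ f=\si_f'\circ p$ then $\si_f=\si_f'$ on $p(\Xt)=X$. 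Thus $\alpha\colon f\mapsto\si_f$ is a well-defined map $\piR\to\{\Id_X;\si\}$.

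The substantive step is that every such $f$ automatically lies in $\Aut^\pm(\Xt)$, holomorphic exactly when $\si_f=\Id_X$ and anti-holomorphic exactly when $\si_f=\si$. Here one uses that $\Xt$ is the universal cover of $X$ \emph{in the category of Riemann surfaces} (Subsection \ref{reminder_cx_case}), so that $p$ is a holomorphic covering map, in particular a local biholomorphism. Fix $f\in\piR$ with $p\circ f=\mu\circ p$, $\mu\in\{\Id_X;\si\}$; since $\mu^{-1}=\mu$ one has $p(\xt)=\mu\bigl(p(f(\xt))\bigr)$, so the lifting property of the universal cover yields a unique lift $g\colon\Xt\to\Xt$ of $\mu\circ p$ through $p$ with $g\bigl(f(\xt)\bigr)=\xt$. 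Then $g\circ f$ and $\Id_{\Xt}$ both cover $p$ and agree at $\xt$, hence are equal by the unique lifting property (valid because $\Xt$ is connected), and symmetrically $f\circ g=\Id_{\Xt}$; so $f$ is a homeomorphism with inverse $g\in\piR$. Reading $f$ in suitable local holomorphic charts on $\Xt$ as $(p|_{\tilde V})^{-1}\circ\mu\circ p|_{\tilde U}$, with $p|_{\tilde U}$ and $p|_{\tilde V}$ biholomorphisms onto open subsets of $X$, then exhibits $f$ as holomorphic when $\mu=\Id_X$ and anti-holomorphic when $\mu=\si$; the two possibilities exclude each other, since a local homeomorphism cannot be simultaneously $\C$-linear and $\C$-antilinear on tangent spaces. (Applying the same lifting argument to $\si\circ p$ also shows $\alpha$ is onto, although that is not needed here.)

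For the remaining assertions, everything rests on $\{\Id_X;\si\}$ being a group of order two under composition. Given $f,g\in\piR$, their product in $\Aut^\pm(\Xt)^\op$ is $g\circ f$, and $p\circ(g\circ f)=\si_g\circ(p\circ f)=(\si_g\circ\si_f)\circ p$ with $\si_g\circ\si_f\in\{\Id_X;\si\}$, so $g\circ f\in\piR$ and $\si_{g\circ f}=\si_g\circ\si_f$; together with $\Id_{\Xt}\in\piR$ (where $\si_{\Id}=\Id_X$) and the inverses produced above, this shows $\piR$ is a subgroup of $\Aut^\pm(\Xt)^\op$. The identity $\si_{g\circ f}=\si_g\circ\si_f$ is precisely the statement that $\alpha\colon\piR\to\{\Id_X;\si\}^{\op}\simeq\Z/2\Z$ is a group homomorphism (the opposite group being immaterial, the target being abelian). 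Finally $\ker\alpha=\{f\in\piR : \si_f=\Id_X\}=\{f\colon\Xt\to\Xt : p\circ f=p\}^{\op}=\Aut(\Xt/X)^{\op}$, which equals $\piC$ by Proposition \ref{usual_fund_gp}; being the kernel of $\alpha$, $\piC$ is normal in $\piR$ --- equivalently, if $p\circ f=p$ and $\si_g=\mu$, then $g^{-1}\circ f\circ g$ covers $\mu^{-1}\circ\Id_X\circ\mu=\Id_X$.

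The main obstacle is the second paragraph: showing that a set-theoretic lift of $\mu\circ p$ is automatically a self-homeomorphism of $\Xt$ that is holomorphic or anti-holomorphic according to the parity of $\mu$ is the one place where the universal-cover and holomorphic-covering properties of $p$ are genuinely used; everything else is direct manipulation inside $\{\Id_X;\si\}$.
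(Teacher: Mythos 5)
Your argument tracks the paper's proof quite closely in its overall structure: uniqueness of $\si_f$, closure of $\piR$ under the product $f\cdot g=g\circ f$ of $\Aut^\pm(\Xt)^\op$, identification of $\ker\alpha$ with $\Aut(\Xt/X)^\op=\piC$ via Proposition~\ref{usual_fund_gp}, and normality as a kernel are all handled the same way. The one place you do appreciably more is the second paragraph: the paper treats \eqref{real_pi1} as already stipulating $\piR\subset\Aut^\pm(\Xt)^\op$, so that $f$ is an (anti)holomorphic bijection from the outset, its inverse is given, and the type of $f$ (holomorphic iff $\si_f=\Id_X$) is a one-liner by comparing the types of the two sides of $p\circ f=\si_f\circ p$ using that $p$ is a holomorphic covering. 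You instead manufacture the inverse $g$ from the universal property of $(\Xt,\xt)$ and spell out the local normal form $(p|_{\tilde V})^{-1}\circ\mu\circ p|_{\tilde U}$. That is a legitimate and slightly stronger variant, since it shows the set $\piR$ is unchanged if one only asks $f$ to be a \emph{continuous} lift rather than an element of $\Aut^\pm(\Xt)$. One caveat: your closing sentence asserts that a ``set-theoretic'' lift of $\mu\circ p$ is automatically a self-homeomorphism, which overstates what the argument yields --- the unique-lifting step used to conclude $g\circ f=\Id_{\Xt}$ needs $g\circ f$, and hence $f$, to be continuous; a genuinely set-theoretic lift of $p$ through $p$ need not be continuous (e.g.\ over $\C\to\C/\Z^2$ one can assign preimages arbitrarily fibrewise). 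With $f$ read as continuous the argument is correct and the remainder matches the paper line for line.
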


\noindent We will see later that $\alpha$ is surjective, so that $\piC$ is of index $2$ in $\piR$ (see Proposition \ref{extension}).

\begin{proposition}\label{piR_acts_on_fibres}
Let $(X,\si)$ be a Klein surface and let $q:(Y,\tau)\lra (X,\si)$ be a real cover. Recall that, given $y\in\fibreq$, there exists a unique map $t_y:\Xt \lra Y$ such that $t_y(\xt) = y$ and $q\circ t_y = p$, where $p:(\Xt,\xt)\lra(X,x)$ is a fixed universal cover of $X$. The group $\piR$ defined in \eqref{real_pi1} acts to the left on the fibre $\fibreq$ by $$f\car y := (\tau_f^{-1} \circ t_y \circ f) (\xt)$$ where $\tau_f=\Id_Y$ if $\si_f=\Id_X$ and $\tau_f=\tau$ if $\si_f=\si$. Moreover, if $\phi:(Y,\tau) \lra (Y',\tau')$ is a homomorphism between two real covers $q:(Y,\tau) \lra (X,\si)$ and $q':(Y',\tau') \lra (X,\si)$ of $(X,\si)$, then the induced map $$\phi_x: q^{-1}(\{x\}) \lra (q')^{-1}(\{x\})$$ is $\piR$-equivariant.
\end{proposition}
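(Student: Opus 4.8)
The plan is to follow the proof of Proposition \ref{monodromy_action_bis} almost verbatim, the only new ingredient being careful bookkeeping of the auxiliary real structures $\tau_f$ and of the fact that the group law on $\piR\subset\Aut^\pm(\Xt)^\op$ is the \emph{opposite} of composition. First I would check that $f\car y$ really lies in $\fibreq$: for $\si_f=\Id_X$ this is the monodromy action of $\piC$ from Proposition \ref{monodromy_action_bis}, while for $\si_f=\si$ the computation behind \eqref{action_on_the_fibre_in_real_case} places $(t_y\circ f)(\xt)$ in $q^{-1}(\{\si(x)\})$, and since $q\circ\tau=\si\circ q$ together with $\si^2=\Id_X$ make $\tau$ restrict to a bijection $q^{-1}(\{\si(x)\})\to\fibreq$, applying $\tau_f^{-1}=\tau$ returns us to $\fibreq$. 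I would also note that $\tau_f^{-1}\circ t_y\circ f$ is holomorphic, since $t_y$ is holomorphic and $f$, $\tau_f$ are simultaneously holomorphic or simultaneously anti-holomorphic according to $\si_f$.

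The crux is then the identity $\tau_f^{-1}\circ t_y\circ f=t_{f\car y}$, proved by the uniqueness clause of Proposition \ref{monodromy_action_bis} exactly as $t_y\circ f=t_{f\car y}$ was there: the left-hand side is holomorphic, it sends $\xt$ to $f\car y$ by definition, and it satisfies $q\circ(\tau_f^{-1}\circ t_y\circ f)=p$ --- for $\si_f=\si$ one uses $q\circ\tau^{-1}=\si\circ q$ (itself a consequence of $q\circ\tau=\si\circ q$ and $\si^2=\Id_X$) together with $p\circ f=\si\circ p$. Granting this, the action axioms follow formally: the unit $\Id_{\Xt}\in\piR$ has $\tau_{\Id_{\Xt}}=\Id_Y$ and acts by $y\mapsto t_y(\xt)=y$; and, writing the product of $f$ and $g$ in $\piR$ as the map $g\circ f$ and using that the augmentation $\alpha$ of \eqref{augmentation} is a homomorphism, so that $\si_{g\circ f}=\si_g\circ\si_f$ and hence $\tau_{g\circ f}=\tau_f\circ\tau_g$ (each $\tau_{(-)}$ equal to $\Id_Y$ or $\tau$, and $\tau^2=\Id_Y$), two applications of the crux identity give $(g\circ f)\car y=f\car(g\car y)$.

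For the $\piR$-equivariance of $\phi_x$ I would use two commutation relations: $\phi\circ t_y=t_{\phi(y)}$, which holds by the same uniqueness argument as in Proposition \ref{monodromy_action_bis} because $q'\circ\phi=q$ (see \eqref{morphism_and_action}), and $\phi\circ\tau=\tau'\circ\phi$, which is part of the hypothesis that $\phi$ is a homomorphism of real covers and which yields $\phi\circ\tau_f^{-1}=(\tau'_f)^{-1}\circ\phi$ for every $f\in\piR$, where $\tau'_f$ denotes the corresponding real structure on $Y'$. Combining these,
\begin{align*}
\phi(f\car y) &= \bigl(\phi\circ\tau_f^{-1}\circ t_y\circ f\bigr)(\xt) = \bigl((\tau'_f)^{-1}\circ\phi\circ t_y\circ f\bigr)(\xt) \\
&= \bigl((\tau'_f)^{-1}\circ t_{\phi(y)}\circ f\bigr)(\xt) = f\car\phi(y),
\end{align*}
which is exactly the asserted equivariance.

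I do not expect a genuine obstacle: the whole argument is diagram chasing on the defining square of $\piR$ in \eqref{real_pi1}--\eqref{real_pi1bis} and on the defining property of $t_y$. The only points that demand attention are the bookkeeping flagged above --- tracking which of $f$, $\tau_f$ are holomorphic versus anti-holomorphic so that the relevant composites stay among morphisms of Klein surfaces, and remembering that $\piR$ carries the opposite group law (which is why the $\tau_f$'s come out in the order $\tau_f\circ\tau_g$ in the composite computation) --- together with the usual caveat, already implicit in Proposition \ref{monodromy_action_bis}, that if $Y$ is disconnected one first passes to the connected component of $y$ before invoking the universal property of $(\Xt,\xt)$ to produce $t_y$.
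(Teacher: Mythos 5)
Your proposal is correct and follows essentially the same path as the paper's proof: you establish the key identity $\tau_f^{-1}\circ t_y\circ f=t_{f\car y}$ by the uniqueness clause of the universal property (after checking that both sides intertwine $p$ and $q$ and send $\xt$ to $f\car y$), deduce the left-action law from it, and obtain $\piR$-equivariance of $\phi_x$ from $\phi\circ t_y=t_{\phi(y)}$ together with $\phi\circ\tau=\tau'\circ\phi$. The only small cosmetic remark is that the natural reading of $\si_{g\circ f}=\si_g\circ\si_f$ gives $\tau_{g\circ f}=\tau_g\circ\tau_f$ rather than $\tau_f\circ\tau_g$, but as you yourself note $\{\Id_Y;\tau\}\simeq\Z/2\Z$ is abelian, so the computation is unaffected.
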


\noindent Proposition \ref{piR_acts_on_fibres} means, in effect, that $\piR$ is the automorphism group of the fibre-at-$x$ functor defined on the category of real covers of $(X,\si)$. We shall not, however, make use of this formalism.

\begin{proof}[Proof of Proposition \ref{piR_is_a_group}]
We recall that the group structure on $\Aut^\pm(\Xt)^\op$ is given by $f\cdot g= g\circ f$. First, note that, given $f\in\Aut^\pm(\Xt)^\op$, the element $\mu\in\{\Id_X;\si\}$ such that $p\circ f = \mu\circ p$, if it exists, is unique. In particular,  the map $\alpha$ in \eqref{augmentation} is well-defined and, since $p$ is a holomorphic covering map, $f$ is holomorphic if $\si_f=\Id_X$ and anti-holomorphic if $\si_f=\si$. Then, if $f,g\in\piR$, one has 

$$
\begin{array}{rclclclcl}
p\circ (f\cdot g) & = & p\circ (g\circ f) & = & (p\circ g) \circ f & = & (\si_g\circ p)\circ f\\
& = & \si_g \circ (p\circ f) & = & \si_g \circ (\si_f\circ p) & = & (\si_g\circ \si_f)\circ p\\
& = & (\si_f \cdot \si_g) \circ p
\end{array}
$$

\noindent so $(f\cdot g)\in\piR$ and, by uniqueness, $\si_{f\cdot g} = \si_f \cdot \si_g$. Likewise, if $f\in\piR$, then on the one hand $$p\circ (f\circ f^{-1}) = p\circ \Id_{\Xt} = \Id_X\circ p=p$$ and on the other hand $$(p\circ f)\circ f^{-1} = (\si_f\circ p) \circ f^{-1}$$ so $p\circ f^{-1} = \si_f^{-1}\circ p$, which proves that $f^{-1}\in\piR$. Therefore, $\piR$ is a subgroup of $\Aut^\pm(\Xt)^\op$ and $\alpha$ is a group homomorphism. Since $$\piC=\{f:\Xt\lra\Xt\ |\ p\circ f = \Id_X\circ p\}^\op,$$ $\piC$ is the kernel of $\alpha$.
\end{proof}

\begin{proof}[Proof of Proposition \ref{piR_acts_on_fibres}]
The proof that follows should be compared to that of Proposition \ref{monodromy_action_bis}. We have already observed in \eqref{action_on_fibre} and \eqref{action_on_the_fibre_in_real_case} that $$f\car y:=(\tau_f^{-1}\circ t_y \circ f)(\xt) \in q^{-1}(\{x\})$$ for all $f\in\piR$. Moreover, one has $t_{f\car y} = \tau_f^{-1}\circ t_y \circ f$. Indeed, on the one hand, $(\tau_f^{-1}\circ t_y \circ f)(\xt) = f\car y$ by definition of the latter and, on the other hand,

$$
\begin{array}{rcl}
q\circ (\tau_f^{-1} \circ t_y \circ f) & = & (q\circ \tau_f^{-1}) \circ (t_y\circ f) \\
& = & (\si_{f}^{-1} \circ q) \circ (t_y \circ f) \\
& = & \si_f^{-1} \circ (q\circ t_y) \circ f \\
& = & \si_f^{-1} \circ p\circ f\\
& = & \si_f^{-1} \circ \si_f \circ p \\
& = & p
\end{array}
$$

\noindent so $\tau_f\circ t_y \circ f = t_{f\car y}$ by uniqueness of the map $t_{f\car y}$. This in particular implies that $\piR$ acts to the left on $\fibreq$. Indeed, having noted that $\si_{f\cdot g}= \si_f \cdot \si_g$ implies that $\tau_{f\cdot g} = \tau_f \cdot \tau_g$ by definition of $\tau_f$, one has

$$
\begin{array}{rclcl}
(f\cdot g) \car y & = & \big[\tau_{f\cdot g}^{-1} \circ t_y \circ (f\cdot g)\big](\xt) & = &\big[ (\tau_f \cdot \tau_g)^{-1} \circ t_y \circ (f\cdot g) \big](\xt)\\
& = & \big[(\tau_g\circ \tau_f)^{-1} \circ t_y \circ (g\circ f) \big](\xt)& = & \big[ \tau_f^{-1} \circ (\tau_g^{-1} \circ t_y \circ g) \circ f \big](\xt)\\
& = & \big[\tau_f^{-1} \circ \tau_{g\car y} \circ f \big](\xt)\\
& = & f \car (g\car y).
\end{array}
$$

\noindent Likewise, the equivariance of $\phi_x:\fibreq \lra (q')^{-1}(\{x\})$ will follow from the commutativity of the diagram 

$$
\xymatrix{
& \Xt \ar[dl]_{t_y} \ar[dr]^{t_{\phi(y)}} & \\
(Y,\tau) \ar[rr]^{\phi}\ar[dr]_{q} & & (Y',\tau') \ar[dl]^{q'} \\
& (X,\si)& 
}
$$

\noindent Indeed we showed in \eqref{morphism_and_action} that $\phi\circ t_y = t_{\phi(y)}$. As as consequence, for all $f\in\piR$,

$$
\begin{array}{rcl}
\phi(f\car y) & = & \phi\big((\tau_f^{-1}\circ t_y\circ f)(\xt)\big) \\
& = & \big[(\phi\circ \tau_f^{-1}) \circ t_y \circ f\big](\xt) \\
& = & \big[ \big((\tau'_f)^{-1} \circ \phi\big) \circ t_y \circ f\big] (\xt) \\
& = & \big[ (\tau'_f)^{-1} \circ (\phi\circ t_y) \circ f\big] (\xt) \\
& = & \big[ (\tau'_f)^{-1} \circ t_{\phi(y)}\circ f \big] (\xt)\\
& =& f\car \phi(y)
\end{array}
$$ 

\noindent so $\phi_x:=\phi|_{\fibreq}$ is indeed $\piR$-equivariant.
\end{proof}

\noindent We can now give the group $\piR$ a name. Perhaps an alternate notation for $\piR$ would be $(\Aut_\si(\Xt/X))^\op$ but, just like for the fundamental group of a complex curve, it fails to take into account the base point and this would be problematic later on (see Propositions \ref{SDP_structure} and \ref{description_in_terms_of_paths}). One last comment before giving the definition of the central notion of the present notes is this~: there are various more conceptual ways to think about the fundamental group of a Klein surface (e.g.\ by further developping the categorical point of view and talking about automorphisms of fibre functors, or by considering orbifolds and using the general theory of orbifold fundamental groups), here we adopt a rather hands-on approach, thinking only about real covers of Klein surfaces and the topological universal covering of the underlying Riemann surface. We hope that this will serve as a gentle introduction to the subject and will make the reader eager to know more about this rich topic.

\begin{definition}[Fundamental group of a real curve]\label{def_fund_gp_real_curve}
Let $(X,\si)$ be a Klein surface and let $x\in X$. We fix a universal cover $p:(\Xt,\xt)\lra (X,x)$. The \textbf{fundamental group} of the Klein surface $(X,\si)$ is the group $$\piR := \{ f:\Xt\lra\Xt\ |\ \exists\,\si_f\in\{\Id_X;\si\},\, p\circ f = f\circ \si_f\}^\op$$ with group structure given by $(f\cdot g) := g\circ f$.
\end{definition}

\noindent As briefly mentioned earlier, a more conceptual definition would be to simply take $\piR$ to be the automorphism group of the fibre-at-$x$ functor on the category of real covers of $(X,\si)$ but, because of the existence of a universal cover $p:(X,\xt)\lra (X,x)$, this would eventually coincide with the definition just given. It should be noted that the universal cover $\Xt$ of $X$ is \textit{not} a real cover in general~: $X$ admits a connected universal cover in the category of Riemann surfaces but $(X,\si)$ does not, in general, admit a \textit{connected} universal cover in the category of Klein surfaces (one way to see this is to use the uniformisation theorem for Riemann surfaces and show, for instance, that a real structure on the Poincar\'e upper half-plane is necessarily conjugate to $z\lra -\ov{z}$ by an element of $\PSL(2;\R)$, so in particular it has a connected set of real points, whereas there are Klein surfaces of genus $g\geq 2$ which admit real structures with no real points, hence cannot be covered by the upper half-plane in the category of Klein surfaces). This also means that there is in general no hope of having $\piR$ act on $\Xt$ by automorphisms of real covers. However, the group $\piR$ acts on $\Xt$ by transformations which are either holomorphic or anti-holomorphic; in other words, $\piR$ is a subgroup of $(\Aut^\pm(\Xt))^\op$.

The next lemma will be useful several times in the remainder of the present subsection. 

\begin{lemma}\label{lift_of_real_structure}
Let $(X,\si)$ be a Klein surface and let $p:(\Xt,\xt)\lra (X,x)$ be a universal cover of $X$, where $x$ is any point in $X$.
\begin{enumerate}
\item Given any $y\in \fibrepsi$,there exists a unique anti-holomorphic map $\sit:\Xt \lra \Xt$ such that $\sit(\xt)=y$ and the diagram

$$
\begin{CD}
\Xt @>\sit>> \Xt \\
@VVpV @VVpV \\
X @>\si>> X
\end{CD}
$$

\noindent is commutative.
\item If $x\in\Fix(\si)$ and $$y:=\xt\in\fibre=\fibrepsi,$$ then the anti-holomorphic map $\sit$ defined in \emph{(1)} satisfies $\sit^2=\Id_{\Xt}$~: in other words, it is a real structure on $\Xt$.
\end{enumerate}
\end{lemma}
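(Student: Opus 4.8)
The plan for (1) is to realise $\si\circ p$ as a connected topological cover of $X$ and then invoke the universal property of the universal cover $p:(\Xt,\xt)\lra(X,x)$; the plan for (2) is to observe that $\sit^2$ is a lift of $\Id_X$ through $p$ that fixes $\xt$, hence is $\Id_{\Xt}$.

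For (1), I would first note that the composite $q:=\si\circ p:\Xt\lra X$ is a connected topological covering map, being the composite of the covering $p$ with the homeomorphism $\si$, and that its fibre over $x$ is $q^{-1}(\{x\}) = p^{-1}(\si^{-1}(\{x\})) = \fibrepsi$. Hence, given $y\in\fibrepsi$, the universal property of $p$ applied to the connected cover $q$ yields a unique covering map $r:\Xt\lra\Xt$ with $q\circ r = p$ and $r(\xt) = y$; since $\si$ is an involution, $q\circ r = p$ rewrites as $p\circ r = \si\circ p$, so $\sit := r$ makes the required square commute and satisfies $\sit(\xt) = y$. Next I would check that $\sit$ is anti-holomorphic, which is a local statement: near any $z\in\Xt$ one can write $\sit = (p|_{U'})^{-1}\circ\si\circ(p|_U)$ for evenly covered neighbourhoods $U\ni z$ and $U'\ni\sit(z)$ on which $p$ restricts to a biholomorphism onto its image, which exhibits $\sit$ as a composite of two biholomorphisms with the anti-holomorphic map $\si$. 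Finally, uniqueness among anti-holomorphic maps follows from the topological uniqueness of lifts: any continuous $\sit':\Xt\lra\Xt$ with $p\circ\sit' = \si\circ p$ and $\sit'(\xt) = y$ is a lift of $\si\circ p$ through $p$, so $\sit' = \sit$ because $\Xt$ is connected.

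For (2), assume $x\in\Fix(\si)$, so $\si(x) = x$, hence $\fibre = \fibrepsi$ and we may take $y := \xt$ in (1); let $\sit$ be the resulting map, so $\sit(\xt) = \xt$ and $p\circ\sit = \si\circ p$. Then $\sit^2 := \sit\circ\sit$ is holomorphic, being a composite of two anti-holomorphic maps, and
\[ p\circ\sit^2 = (p\circ\sit)\circ\sit = (\si\circ p)\circ\sit = \si\circ(p\circ\sit) = \si\circ\si\circ p = p, \]
so $\sit^2$ is a lift of $\Id_X$ through $p$; moreover $\sit^2(\xt) = \sit(\xt) = \xt$. Since $\Id_{\Xt}$ is another such lift, uniqueness of lifts on the connected space $\Xt$ gives $\sit^2 = \Id_{\Xt}$. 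Finally $\sit\neq\Id_{\Xt}$ because $\sit$ is anti-holomorphic, so its tangent maps are $\C$-antilinear and nonzero, hence not $\C$-linear; thus $\sit$ is a real structure on $\Xt$ in the sense of Definition \ref{def:real_curve}.

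I expect no serious obstacle. The points that call for a little care are: recognising that what must be lifted is $\si\circ p$ rather than $\si$, so that the source of the map to be lifted is the simply connected space $\Xt$ and the universal property applies verbatim; keeping the uniqueness statement at the topological level (it is the uniqueness of lifts to a connected space, not something special to anti-holomorphic maps) before upgrading regularity; and, in (2), using that a lift of $\Id_X$ through $p$ fixing one point of $\Xt$ must be $\Id_{\Xt}$.
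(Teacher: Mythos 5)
Your proposal is correct and takes essentially the same route as the paper: part (1) is the universal property of the universal cover (the paper phrases it by viewing $\si\circ p:(\Xt,\xt)\lra(X,\si(x))$ as a universal cover and lifting $p$, while you lift the cover $\si\circ p$ against $p$; these are the same application of the lifting theorem and yield the same $\sit$), together with the local observation that $p\circ\sit=\si\circ p$ forces $\sit$ to be anti-holomorphic. Part (2) is identical in both: $\sit^2$ is a holomorphic self-map over $\Id_X$ fixing $\xt$, hence equals $\Id_{\Xt}$ by uniqueness of lifts on the connected space $\Xt$.
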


\begin{proof}
The lemma says that $\si$ always lifts to an anti-holomorphic transformation of $\Xt$ covering $\si$ but it does not lift to an \textit{involutive} such transformation in general.
\begin{enumerate}
\item Consider the (anti-holomorphic) covering map $$\si\circ p: \big(\Xt, \xt\big) \lra \big(X,\si(x)\big).$$ Since $\Xt$ is simply connected, $\si\circ p$ is a topological universal cover of $X$, hence the existence, given $y\in\fibrepsi$, of a unique covering map $$\sit: (\Xt,\xt) \lra (\Xt,y)$$ satisfying $p\circ \sit = \si\circ p$. Since $(\Xt,y)$ is simply connected, the covering map $\sit:(\Xt,\xt)\lra (\Xt,y)$ must be one-to-one, so it is a diffeomorphism. Moreover, since $\si$ is anti-holomorphic and $p$ is a holomorphic covering map, the relation $p\circ \sit=\si\circ p$ implies that $\sit$ is anti-holomorphic.
\item If $x\in\Fix(\si)$ and $y=\xt$, one has that $\sit^2:\Xt\lra \Xt$ covers $\si^2=\Id_X$ and takes $\xt$ to $\xt$. By uniqueness of such a map, one has $\sit^2=\Id_{\Xt}$.
\end{enumerate}
Note that point (2) above shows that the choice of a real point in $X$, if possible, defines a real structure on $\Xt$ which turns the universal covering map $p:\Xt\lra X$ into a homomorphism of Klein surfaces. A different real point in $X$ may lead to a non-isomorphic real structure on $\Xt$ (see also the remark after Proposition \ref{SDP_structure}).
\end{proof}

\noindent The first application of Lemma \ref{lift_of_real_structure} is a property of the fundamental group of a real curve which, albeit immediate from our definition of this group, is nonetheless key to all that follows, in particular the notion of \textit{representation} of the fundamental group of a real curve to be presented in Section \ref{section_reps_fund_gp}.

\begin{proposition}\label{extension}
Let $\alpha:\piR\lra \{\Id_X;\si\}^\op \simeq  \Z/2\Z$ be the group homomorphism defined in Proposition \ref{piR_is_a_group}. There is a short exact sequence, called the \textbf{homotopy exact sequence},
\begin{equation}\label{homotopy_exact_sequence}
1\lra \piC \lra \piR \overset{\alpha}{\lra} \Z/2\Z \lra 1
\end{equation}

\noindent showing in particular that $\piC$ is an index two subgroup of $\piR$.
\end{proposition}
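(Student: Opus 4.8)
The plan is to observe that, by Proposition \ref{piR_is_a_group}, almost the entire statement is already in hand: the map $\alpha\colon\piR\to\{\Id_X;\si\}^\op\simeq\Z/2\Z$, $f\mapsto\si_f$, is a well-defined group homomorphism whose kernel is exactly $\piC=\{f\colon\Xt\to\Xt\mid p\circ f=p\}^\op$, and the inclusion $\piC\hookrightarrow\piR$ identifies $\piC$ with a (normal) subgroup of $\piR$. Consequently, exactness of \eqref{homotopy_exact_sequence} at $\piC$ (injectivity of the inclusion) and at $\piR$ (the condition $\Im(\piC\hookrightarrow\piR)=\ker\alpha$) is immediate, and the only point still requiring an argument is exactness at $\Z/2\Z$, i.e.\ the surjectivity of $\alpha$.

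To establish surjectivity I would exhibit an element $f\in\piR$ with $\si_f=\si$, the non-trivial element of $\{\Id_X;\si\}^\op$. This is precisely what Lemma \ref{lift_of_real_structure}(1) provides. The fibre $\fibrepsi$ is non-empty, since $p$ is a covering map over the connected space $X$ and is therefore surjective; picking any $y\in\fibrepsi$, Lemma \ref{lift_of_real_structure}(1) yields an anti-holomorphic diffeomorphism $\sit\colon\Xt\to\Xt$ with $\sit(\xt)=y$ and $p\circ\sit=\si\circ p$. By the description \eqref{real_pi1} (equivalently \eqref{real_pi1bis}) of $\piR$, such a $\sit$ belongs to $\piR$ and satisfies $\alpha(\sit)=\si$, so $\alpha$ is onto.

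Combining these facts gives the short exact sequence \eqref{homotopy_exact_sequence}; by the first isomorphism theorem $\piR/\piC\simeq\Z/2\Z$, hence $\piC$ has index two in $\piR$. I do not expect any genuine obstacle, since the substantive content has been front-loaded into Proposition \ref{piR_is_a_group} and Lemma \ref{lift_of_real_structure}. The one small point to record carefully is the non-emptiness of $\fibrepsi$, so that Lemma \ref{lift_of_real_structure}(1) applies; note that one could alternatively invoke Lemma \ref{lift_of_real_structure}(2) when $\Fix(\si)\neq\emptyset$, but that would not cover the fixed-point-free case, so the argument via surjectivity of $p$ is the one to use in general.
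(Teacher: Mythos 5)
Your proposal matches the paper's proof exactly: everything except surjectivity of $\alpha$ is delegated to Proposition \ref{piR_is_a_group}, and surjectivity is obtained from Lemma \ref{lift_of_real_structure}(1). The extra remark about non-emptiness of $\fibrepsi$ (so that the lemma applies) is a harmless and correct clarification.
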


\begin{proof}
Based on Proposition \ref{piR_is_a_group}, it only remains to prove that $\alpha$ is surjective. This is a consequence of point (1) in Lemma \ref{lift_of_real_structure}.
\end{proof}

The next application of Lemma \ref{lift_of_real_structure} is a proof of the fact that, when the base point $x$ is a fixed point of $\si$, the group $\piR$ has a canonical semi-direct product structure (see also Exercises \ref{more_on_SDP_structures} and \ref{isomorphic_SDP_structures}).

\begin{proposition}\label{SDP_structure}
Let $x\in\Fix(\si)$ be a real point of $X$. By Lemma \ref{lift_of_real_structure}, there exists a real structure $\sit$ on $\Xt$ sending $\xt$ to $\xt$. This $\sit$ defines a left action of $\{\Id_{\Xt};\sit\}^\op \simeq \Z/2\Z$ on $\piC=\Aut(\Xt/X)^\op$ by conjugation \begin{equation}\label{involution_on_complex_pi_one} \sit\cdot f = \sit f \sit^{-1}\end{equation} and $\piR$ is isomorphic to the associated semi-direct product $$\piC\rtimes_{\sit} \Z/2\Z.$$
\end{proposition}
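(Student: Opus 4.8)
The plan is to split the homotopy exact sequence \eqref{homotopy_exact_sequence} using $\sit$ and to recover the semi-direct product structure from the resulting section. First I would verify that $\sit\in\piR$: by part~(1) of Lemma~\ref{lift_of_real_structure}, $\sit$ is an anti-holomorphic self-diffeomorphism of $\Xt$ with $p\circ\sit=\si\circ p$, hence $\sit\in\piR$ and $\alpha(\sit)=\si$, the non-trivial element of $\{\Id_X;\si\}^\op\simeq\Z/2\Z$. Crucially, since $x\in\Fix(\si)$ and the chosen point $\xt$ lies in $\fibre=\fibrepsi$, part~(2) of the same lemma gives $\sit^2=\Id_{\Xt}$. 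Therefore $H:=\{\Id_{\Xt};\sit\}$ is a subgroup of $\Aut^\pm(\Xt)^\op$ of order $2$ (note that $\sit\cdot\sit=\sit\circ\sit=\Id_{\Xt}$ for the opposite group law), it is contained in $\piR$, and $\alpha|_H\colon H\to\Z/2\Z$ is an isomorphism.

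Next I would apply the elementary fact that a short exact sequence $1\to N\to G\overset{\alpha}{\to}Q\to 1$ admitting a group-theoretic section $s\colon Q\to G$ of $\alpha$ is isomorphic to the semi-direct product $N\rtimes_\phi Q$, where $\phi(q)(n)=s(q)\,n\,s(q)^{-1}$ is conjugation computed in $G$. Here I take $N=\piC=\ker\alpha$ (Proposition~\ref{piR_is_a_group}), $G=\piR$, $Q=\{\Id_X;\si\}^\op\simeq\Z/2\Z$, and $s:=(\alpha|_H)^{-1}$, so that $s(\si)=\sit$. This already yields $\piR\simeq\piC\rtimes_\phi\Z/2\Z$; it remains only to identify $\phi$ with the action \eqref{involution_on_complex_pi_one}.

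Finally I would carry out that identification, which is the one place where care with the opposite group structure is needed. In $\Aut^\pm(\Xt)^\op$ the product is $f\cdot g=g\circ f$ and inverses agree with the ordinary inverses of maps, so conjugation of $f\in\piC$ by $\sit$ inside $\piR$ equals $\sit^{-1}\circ f\circ\sit$; since $\sit^{-1}=\sit$ this is $\sit\circ f\circ\sit=\sit f\sit^{-1}$, exactly \eqref{involution_on_complex_pi_one}. One checks this composite lies in $\piC$: it is holomorphic (anti-holomorphic $\circ$ holomorphic $\circ$ anti-holomorphic) and it covers $\Id_X$ because $p\circ(\sit\circ f\circ\sit)=\si\circ p\circ f\circ\sit=\si\circ p\circ\sit=\si\circ\si\circ p=p$, hence it belongs to $\Aut(\Xt/X)^\op=\piC$; and $\sit^2=\Id_{\Xt}$ is precisely what makes $q\mapsto\phi(q)$ a genuine homomorphism $\Z/2\Z\to\Aut(\piC)$, that is, a left action. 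I expect no real obstacle here: the substance is contained in Lemma~\ref{lift_of_real_structure} (especially the involutivity in part~(2), which is where the hypothesis $x\in\Fix(\si)$ enters), and the remaining work is bookkeeping to ensure that the opposite group law reproduces the formula \eqref{involution_on_complex_pi_one} rather than its ``inverse'' --- a distinction that disappears exactly because $\sit$ is an involution, which is also what makes $H$ a subgroup in the first place.
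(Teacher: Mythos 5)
Your proof is correct and amounts to the same argument as the paper's: the paper directly constructs the isomorphism $\piC\rtimes_{\sit}\{\Id_{\Xt};\sit\}\to\piR$, $(f,\eps)\mapsto f\eps$, and verifies it is a bijective group homomorphism, which is precisely the content of the general splitting lemma you invoke with section $s(\si)=\sit$. You correctly isolate the two ingredients that make this work --- $\sit^2=\Id_{\Xt}$ from part~(2) of Lemma~\ref{lift_of_real_structure} (so $H$ is a genuine subgroup and $s$ a genuine homomorphism), and the observation that conjugation in the opposite group coincides with \eqref{involution_on_complex_pi_one} --- so there is no gap.
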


\noindent Note that it is implicit in the statement above that the symbol $\cdot$ designates the group action of $\Z/2\Z$ on $\piC$ and that $\sit f \sit^{-1}$ denotes the product of $\sit$, $f$ and $\sit^{-1}$ in $\piR$. From now on, we will use this notation instead of the one used until now, which would have been $\sit\car f = \sit \cdot f \cdot \sit^{-1}=\sit^{-1}\circ f\circ\sit$.

\begin{proof}[Proof of Proposition \ref{SDP_structure}]
Observe first that $\sit f\sit^{-1}$ covers $\Id_X$ if so does $f$ and is holomorphic if so is $f$. This defines a left action of $\Z/2\Z$ on $\piC$ because, as is easily checked, $\widetilde{\eps\eta} = \widetilde{\eps}\, \widetilde{\eta}$ for all $\eps,\eta\in\{\Id_X;\si\}$. If $g\in\piR$ covers $\si$, then $g = (g\sit^{-1})\sit$, with $g\sit^{-1}$ covering $\Id_X$, so the map 
\begin{equation}\label{SDP_isom}
\begin{array}{ccc}
\piC \rtimes_{\sit} \{\Id_{\Xt};\sit\} & \lra & \piR \\
(f,\eps) & \lmt & f\eps
\end{array}
\end{equation}

\noindent is surjective. Since, for $f\in\piC$, $f\eps$ covers $\si$ on $X$ if and only if $\eps=\sit$, it is also injective. Finally, the product $$(f,\eps)(g,\eta) = \big(f(\eps\cdot g) , \eps\eta\big) = (f\eps g\eps^{-1}, \eps\eta)$$ in $\piC\rtimes_{\sit} \{\Id_{\Xt};\sit\}$ is sent, under \eqref{SDP_isom}, to $$(f\eps g\eps^{-1})(\eps\eta) = (f\eps)(g\eta)$$ so \eqref{SDP_isom} is indeed a group isomorphism.
\end{proof}

\noindent We recall that a semi-direct product structure on $\piR$ is the same as a splitting $\Z/2\Z\lra \piR$ of the homotopy exact sequence \eqref{homotopy_exact_sequence} and this is the same as an anti-holomorphic element of order $2$ in $\piR$, i.e.\ \textit{a real structure on $\Xt$, covering $\si$ on $X$}. Thus, Lemma \ref{lift_of_real_structure} and Proposition \ref{SDP_structure} show that \textit{a real point of $(X,\si)$} gives rise to a splitting of the homotopy exact sequence $$1\lra \piC \lra \piR \overset{\alpha}{\lra} \Z/2\Z \lra 1.$$ Of course, different real points may lead to different semi-direct product structures but understanding how exactly it is so is not quite an elementary story and we refer to \cite{Mochizuki} for results on this.

The final application of Lemma \ref{lift_of_real_structure} is a description of the fundamental group of $(X,\si)$ using loops at $x$ (and, when $\si(x)\neq x$, paths from $x$ to $\si(x)$). We will see that this description is different depending on whether $x$ is a real point of $(X,\si)$. This is the reason why we have given a definition of $\piR$ without any reference to paths in $X$, for it provides a more intrinsic description of the group $\piR$, in which it is not necessary to distinguish cases depending on whether $x$ is a real point or not. When $x\in \Fix(\si)$, the description of $\piR$ in terms of loops at $x$ is in fact a corollary of Proposition \ref{SDP_structure}.

\begin{corollary}\label{SDP_structure_in_terms_of_paths}
Think of $\piC$ as the group of homotopy classes of loops at $x\in\Fix(\si)$. Then the left action of $\{\Id_X;\si\}^\op\simeq \Z/2\Z$ on $\piC$ defined in \eqref{involution_on_complex_pi_one} is given, for any $\ga\in\piC$, by $$\si_*\ga = \si^{-1} \circ \ga$$ and then $$\piR \simeq \piC \rtimes_{\si} \Z/2\Z$$ with respect to this action.
\end{corollary}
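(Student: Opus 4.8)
The plan is to bootstrap from Proposition~\ref{SDP_structure}: it already provides an isomorphism $\piR \simeq \piC \rtimes_{\sit} \Z/2\Z$ for the conjugation action \eqref{involution_on_complex_pi_one}, so the only remaining task is to rewrite that action in terms of loops at $x$. Identifying $\piC$ with the group of homotopy classes of loops at $x$ means identifying it, via the isomorphism $\Psi$ of Proposition~\ref{usual_fund_gp}, with $\Aut(\Xt/X)^{\op}$: a loop $\ga$ at $x$ is sent to the unique deck transformation $f = \Psi(\ga)$ characterized by $f(\xt) = \ga \car \xt = \gat(1)$, where $\gat$ is the lift of $\ga$ starting at $\xt$. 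Since $x \in \Fix(\si)$, Lemma~\ref{lift_of_real_structure}(2) gives a real structure $\sit$ on $\Xt$ with $\sit(\xt) = \xt$, and $\sit^{-1} = \sit$; in this notation, the action \eqref{involution_on_complex_pi_one} reads, as a composition of self-maps of $\Xt$, $f \lmt \sit^{-1} \circ f \circ \sit = \sit \circ f \circ \sit$.

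So I would fix $\ga \in \piC$, put $f = \Psi(\ga)$, and set $g := \sit \circ f \circ \sit$. First check that $g \in \Aut(\Xt/X)$: it is a homeomorphism, and, using $p \circ \sit = \si \circ p$ (from Lemma~\ref{lift_of_real_structure}) together with $p \circ f = p$, one gets $p \circ g = \si \circ p \circ f \circ \sit = \si \circ p \circ \sit = \si^2 \circ p = p$. Then evaluate at the base point: since $\sit(\xt) = \xt$ and $f(\xt) = \gat(1)$, we have $g(\xt) = \sit\big(f(\sit(\xt))\big) = \sit\big(\gat(1)\big)$.

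Next I would identify the loop whose monodromy produces $\sit(\gat(1))$. Consider the path $\sit \circ \gat$ in $\Xt$. It starts at $\sit(\gat(0)) = \sit(\xt) = \xt$, and $p \circ (\sit \circ \gat) = (\si \circ p) \circ \gat = \si \circ (p \circ \gat) = \si \circ \ga$; since $\ga$ is a loop at $x$ and $\si(x) = x$, the composite $\si \circ \ga$ is again a loop at $x$, and $\sit \circ \gat$ is precisely its lift starting at $\xt$. Hence $(\si \circ \ga) \car \xt = (\sit \circ \gat)(1) = \sit(\gat(1)) = g(\xt)$, so $\Psi(\si \circ \ga)$ and $g$ are two deck transformations agreeing at $\xt$; by the uniqueness clause in the universal property of $(\Xt,\xt)$ they coincide, i.e.\ $\Psi(\si \circ \ga) = \sit \circ f \circ \sit$. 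Thus the conjugation action \eqref{involution_on_complex_pi_one} corresponds, under $\Psi$, to the map $\ga \lmt \si \circ \ga$ on loops at $x$, which is exactly $\si^{-1}\circ\ga$ because $\si$ is an involution; this map is a well-defined left action of $\Z/2\Z$ since $\si$ acts on loops pointwise — hence compatibly with concatenation and with homotopies fixing $x$ — and $\si^2 = \Id_X$. Feeding this back into Proposition~\ref{SDP_structure} yields $\piR \simeq \piC \rtimes_\si \Z/2\Z$ for this action.

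There is no genuine obstacle here; the only delicate point is bookkeeping with the two order-reversing conventions in play — the opposite group law $f \cdot g = g \circ f$ on $\piR$ and the path-concatenation convention \eqref{convention_on_paths} — so that, after unwinding them, $\sit f \sit^{-1}$ is indeed the composition $\sit \circ f \circ \sit$ and $\Psi$ is normalized via the monodromy $\ga \car \xt$ rather than its inverse.
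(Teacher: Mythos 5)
Your proof is correct and takes the same route as the paper's: both apply $\Psi$, compute the monodromy of $\si\circ\ga$ by observing that $\sit\circ\gat$ is its lift starting at $\xt$, and conclude via the uniqueness clause of the universal property that $\Psi(\si\circ\ga)=\sit\circ f\circ\sit$, which is $\sit f\sit^{-1}$ in the opposite group structure on $\piR$. The only cosmetic difference is that you use $\sit^{-1}=\sit$ (from Lemma~\ref{lift_of_real_structure}(2)) at the outset, whereas the paper carries the $\sit^{-1}$ notation throughout; the careful attention you pay to the two order-reversing conventions is exactly the right thing to check.
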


\begin{proof}
By Proposition \ref{SDP_structure}, it suffices to show that $\Psi(\si_*\ga) = \sit f \sit^{-1}$, where $\Psi:\piC \lra \Aut(\Xt/X)^\op$ is the map defined in Proposition \ref{usual_fund_gp}. By definition of this map, $\Psi(\si_*\ga)$ is the unique automorphism of universal cover $p:\Xt\lra X$ sending $\xt$ to $(\si_*\ga)\car \xt$. But $(\si_*\ga)\car \xt = (\widetilde{\si^{-1}\circ\ga})(1)$, where $\widetilde{\si^{-1}\circ\ga}$ is the unique path in $\Xt$ such that $p\circ(\widetilde{\si^{-1}\circ\ga})=\si^{-1}\circ\ga$ and $(\widetilde{\si^{-1}\circ\ga})(0) = \xt$. Let us denote, as usual, $\gat$ the unique path in $\Xt$ such that $p\circ\gat=\ga$ and $\gat(0)=\xt$. Since, by definition of $\sit$ (see Lemma \ref{lift_of_real_structure}), $(\sit^{-1}\circ\ga)(0)=\sit^{-1}(\gat(0)) = \sit^{-1}(\xt)=\xt$ and $p\circ(\sit^{-1}\circ\gat)=(p\circ\sit^{-1})\circ\gat=(\si^{-1}\circ p)\circ\gat=\si^{-1}\circ(p\circ\gat)=\si^{-1}\circ\ga$, one has $\widetilde{\si^{-1}\circ\ga} = \sit^{-1} \circ \gat$, by uniqueness of such a lifting. As a consequence, $(\si_*\ga)\car\xt = (\sit^{-1}\circ\gat)(1) = \sit^{-1}(\gat(1)) = \sit^{-1}(\ga\car\xt) = \sit^{-1} (f(\xt))$, where $f=\Psi(\ga)$. Since $\sit(\xt)=\xt$, one has $$(\si_*\ga)\car\xt = (\sit^{-1}\circ f)(\xt)=(\sit^{-1}\circ f\circ \sit)(\xt).$$ Moreover, $\sit^{-1}\circ f\circ \sit$ is an automorphism of the cover $\Xt/X$, so $\Psi(\si_*\ga) = \sit^{-1}\circ f\circ \sit$ by definition of the map $\Psi$. Then we observe that $\Psi(\si_*\ga)$ is actually equal to the product $\sit f\sit^{-1}$ in $\piR\subset\Aut^\pm(\Xt)^\op$.
\end{proof}

\begin{remark}\label{SDP_structure_is_canonical_for_x}
Observe that if we take $\Xt$ to be the usual set of homotopy classes of paths starting at $x\in X^\si$, then it is natural to take $\xt$ to be the homotopy class of the constant path at $x$. In particular, $\sit$ is entirely determined by the choice of $x$, so the semi-direct products of Proposition \ref{SDP_structure} and Corollary \ref{SDP_structure_in_terms_of_paths} are canonically isomorphic.
\end{remark}

When $x\notin\Fix(\si)$, there is a different description of $\piR$ in terms of paths in $X$, given in Proposition \ref{description_in_terms_of_paths}. It requires the next lemma, which is interesting in itself.

\begin{lemma}\label{lift_of_real_paths_to_autom}
Let $x$ be a point of $X$ such that $\si(x)\neq x$ and let $\eta:[0;1]\lra X$ be a path from $x$ to $\si(x)$. By Lemma \ref{lifting_of_paths_and_homotopies}, there exists a unique path $\etat:[0;1]\lra \Xt$ such that $p\circ\etat=\eta$ and $\etat(0) = \xt$. Then, there exists a unique anti-holomorphic map $\sit:\Xt\lra \Xt$ such that $p\circ \sit = \si\circ p$ and $\sit(\xt)= \etat(1)$.
\end{lemma}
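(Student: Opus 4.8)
The plan is to obtain this lemma as an essentially immediate consequence of point (1) of Lemma \ref{lift_of_real_structure}, which already provides, for \emph{any} point $y$ in the fibre $p^{-1}(\{\si(x)\})$, a unique anti-holomorphic map $\sit:\Xt\lra\Xt$ with $\sit(\xt)=y$ and $p\circ\sit=\si\circ p$. So the only work is to produce the correct element of $p^{-1}(\{\si(x)\})$ out of the path $\eta$, namely $\etat(1)$, and then feed it into that lemma.

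Concretely, I would proceed in three short steps. First, invoke Lemma \ref{lifting_of_paths_and_homotopies}(1): since $\eta$ is a path in $X$ starting at $x=p(\xt)$ and $p$ is a topological covering map, there is a unique lift $\etat:[0;1]\lra\Xt$ with $p\circ\etat=\eta$ and $\etat(0)=\xt$; this justifies the well-definedness of $\etat$ asserted in the statement. Second, evaluate the relation $p\circ\etat=\eta$ at $t=1$ to get $p\big(\etat(1)\big)=\eta(1)=\si(x)$, so that $\etat(1)\in p^{-1}(\{\si(x)\})=\fibrepsi$. Third, apply Lemma \ref{lift_of_real_structure}(1) with $y:=\etat(1)$: it yields a unique anti-holomorphic map $\sit:\Xt\lra\Xt$ such that $\sit(\xt)=\etat(1)$ and $p\circ\sit=\si\circ p$, which is exactly the claim.

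There is no genuine obstacle here; the real content is packaged in Lemma \ref{lift_of_real_structure}(1), and the present lemma is simply the form of that statement adapted to a base point $x\notin\Fix(\si)$. If one wanted the argument to be self-contained, one can re-run the proof of Lemma \ref{lift_of_real_structure}(1): the map $\si\circ p:(\Xt,\xt)\lra(X,\si(x))$ is an anti-holomorphic covering map and, $\Xt$ being simply connected, it is a topological universal cover of $X$, so the universal property gives a unique covering map $\sit:(\Xt,\xt)\lra(\Xt,\etat(1))$ with $p\circ\sit=\si\circ p$; this $\sit$ is a homeomorphism since both source and target are simply connected, and it is anti-holomorphic because $p$ is a holomorphic covering map while $\si$ is anti-holomorphic. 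Note that, in contrast with Lemma \ref{lift_of_real_structure}(2), one should \emph{not} expect $\sit^2=\Id_{\Xt}$ here, precisely because $x$ is not fixed by $\si$; this lemma is meant to serve as the input for the path-theoretic description of $\piR$ in the case $x\notin\Fix(\si)$ (Proposition \ref{description_in_terms_of_paths}), in the same way that Lemma \ref{lift_of_real_structure}(2) feeds Proposition \ref{SDP_structure}.
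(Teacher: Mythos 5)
Your proof is correct and follows exactly the paper's approach: the paper's proof is simply ``This is a direct application of part (1) of Lemma \ref{lift_of_real_structure},'' together with the same remark that $\sit^2\neq\Id_{\Xt}$ in general. You merely spell out the trivial verification that $\etat(1)\in\fibrepsi$, which the paper leaves implicit.
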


\begin{proof}
This is a direct application of part (1) of Lemma \ref{lift_of_real_structure}. Note that $\sit^2\neq\Id_{\Xt}$ in general (see also Exercise \ref{more_on_SDP_structures} for related considerations).
\end{proof}

\noindent In the next proposition and its proof, we shall denote by $\ga\car \xt$ the element $\gat(1)$, where $\ga$ is a path in $X$ starting at $x$ and going either to $x$ or to $\si(x)$, and $\gat$ is the unique lifting of $\ga$ starting at $\xt$.

\begin{proposition}\label{description_in_terms_of_paths}
Let $x$ be a point of $X$ such that $\si(x)\neq x$. Let $\Ga$ be the set of homotopy classes (with fixed endpoints) of paths of the form $$\{\zeta:[0;1]\lra X\ |\ \zeta(0)=\zeta(1)=x\} \sqcup \{\eta:[0;1]\lra X\ |\ \eta(0)=x\ \mathrm{and}\ \eta(1)=\si(x)\}$$ and set, for all $\ga,\ga'$ in $\Ga$, $$\ga \boxtimes \ga' = \left\{ \begin{array}{cl}
\ga \ast \ga' & \mathrm{if}\ \ga'(1)=x \\
(\si\circ\ga) \ast \ga' & \mathrm{if}\ \ga'(1)=\si(x).
\end{array}
\right.
$$
\noindent By the universal property of the universal cover and Lemma \ref{lift_of_real_paths_to_autom}, given $\ga\in\Ga$, there exists a unique self-diffeomorphism $f_\ga:\Xt\lra\Xt$ such that $f_\ga(\xt) = \ga\car \xt$. This $f_\ga$ covers $\Id_X$ if $\ga(1)=x$ and covers $\si$ if $\ga(1)=\si(x)$. Then $(\Ga,\boxtimes)$ is a group and the map $$\Phi:\begin{array}{ccc}
\Ga &\lra & \piR \\
\ga & \lmt & f_\ga
\end{array} 
$$
\noindent is a group isomorphism which in fact makes the following diagram commute
$$\begin{CD}
1@>>> \piC @>>> \Ga @>\beta>> \Z/2\Z @>>> 1\\
@. @V{\simeq}V{\Phi|_{\piC}}V @V{\simeq}V{\Phi}V @| @.\\
1@>>> \piC @>>> \piR @>\alpha>> \Z/2\Z @>>> 1\\
\end{CD}$$
\noindent where $\beta(\ga) = \Id_X$ if $\ga(1)=x$ and $\beta(\ga)=\si(x)$ if $\ga(1)=\si(x)$.
\end{proposition}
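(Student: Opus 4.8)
The strategy is to check, in turn, that $(\Ga,\boxtimes)$ is a group, that $f_\ga$ is well-defined and has the stated covering behaviour, that $\Phi$ is a group homomorphism, and finally that it is bijective and fits into the diagram. The key organising idea is that the lifting $\ga\mapsto \gat$ (with $\gat(0)=\xt$) behaves well under composition once one remembers the convention \eqref{convention_on_paths}: if $\ga'(1)=x$ then $\widetilde{\ga\ast\ga'}$ is the concatenation of $\widetilde{\ga'}$ with the lift of $\ga$ \emph{starting at $\widetilde{\ga'}(1)=\ga'\car\xt$}, which is precisely $f_{\ga'}\circ\gat$ by uniqueness of lifts and the fact that $f_{\ga'}$ covers $\Id_X$; and if $\ga'(1)=\si(x)$ one uses instead $f_{\ga'}\circ(\widetilde{\si^{-1}\circ\ga})=f_{\ga'}\circ\sit_{\ga'}^{-1}\circ\gat$, where $\sit_{\ga'}$ is the anti-holomorphic lift from Lemma \ref{lift_of_real_paths_to_autom} attached to $\ga'$. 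This is the same computation that appeared in the proof of Corollary \ref{SDP_structure_in_terms_of_paths}, now carried out without assuming $x$ is fixed.

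First I would verify that $\boxtimes$ is associative and admits the constant loop at $x$ as a unit; the inverse of $\zeta$ with $\zeta(1)=x$ is the reverse path, and the inverse of $\eta$ with $\eta(1)=\si(x)$ is $\si\circ\ov{\eta}$ (reverse of $\eta$, then apply $\si$), which indeed goes from $x$ to $x$ after one $\boxtimes$-multiplication — checking that $\eta\boxtimes(\si\circ\ov\eta)$ and $(\si\circ\ov\eta)\boxtimes\eta$ are homotopic to constants is a short path-homotopy argument using $\si^2=\Id_X$. Next, given $\ga\in\Ga$, part (1) of Lemma \ref{lift_of_real_structure} (applied via Lemma \ref{lift_of_real_paths_to_autom} when $\ga(1)=\si(x)$) and the universal property of $p:(\Xt,\xt)\lra(X,x)$ produce the unique $f_\ga\in\Aut^\pm(\Xt)$ with $f_\ga(\xt)=\ga\car\xt$ and $p\circ f_\ga=\si_\ga\circ p$ where $\si_\ga=\Id_X$ or $\si$ according to the endpoint; hence $f_\ga\in\piR$ and $\alpha(f_\ga)=\beta(\ga)$ by construction, so the right-hand square of the diagram commutes, and $\Phi$ restricts to $\piC$ on the kernels, giving the left-hand square.

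The heart of the argument is showing $\Phi(\ga\boxtimes\ga')=\Phi(\ga)\cdot\Phi(\ga')=f_{\ga'}\circ f_\ga$ (recall the opposite-group convention $f\cdot g=g\circ f$). By uniqueness of $f_{\ga\boxtimes\ga'}$ it suffices to check both sides agree on $\xt$ and cover the same map $\si_\ga\circ\si_{\ga'}$ on $X$; the covering statement is immediate from $\si_{\ga\boxtimes\ga'}=\si_\ga\si_{\ga'}$, which one reads off the definition of $\boxtimes$. For the values at $\xt$: $(f_{\ga'}\circ f_\ga)(\xt)=f_{\ga'}(\ga\car\xt)$, and one must identify this with $(\ga\boxtimes\ga')\car\xt$. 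When $\ga'(1)=x$, $f_{\ga'}$ covers $\Id_X$ and $f_{\ga'}(\gat(1))$ is the endpoint of the lift of $\ga$ starting at $\ga'\car\xt$, which concatenated after $\widetilde{\ga'}$ lifts $\ga\ast\ga'$; when $\ga'(1)=\si(x)$, $f_{\ga'}$ is the anti-holomorphic lift covering $\si$, so $f_{\ga'}\circ\gat$ lifts $\si\circ\ga$ and starts at $f_{\ga'}(\xt)=\ga'\car\xt=\widetilde{\ga'}(1)$, hence it lifts $(\si\circ\ga)\ast\ga'$ — exactly $\ga\boxtimes\ga'$ in that case. This is the step I expect to be the main obstacle, not because any single computation is hard but because it requires carefully tracking basepoints of lifts and the holomorphic-versus-anti-holomorphic dichotomy simultaneously; the cleanest way to present it is to invoke the already-proven identity $t_{f\car y}=\tau_f^{-1}\circ t_y\circ f$ from the proof of Proposition \ref{piR_acts_on_fibres} in the special case $Y=\Xt$.

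Finally, injectivity of $\Phi$ is clear since $f_\ga=\Id_{\Xt}$ forces $\ga\car\xt=\xt$ and hence (as $\Xt$ is simply connected, so $\ga$ is determined by its endpoint-lift) $\ga$ is trivial in $\Ga$; surjectivity follows because any $f\in\piR$ covers some $\mu\in\{\Id_X;\si\}$, so $f(\xt)$ lies in $\fibre$ or $\fibrepsi$ accordingly, and joining $\xt$ to $f(\xt)$ by a path in $\Xt$ and pushing down by $p$ gives a $\ga\in\Ga$ with $f_\ga=f$. The commutativity of the diagram having already been established, the five lemma (or a direct check) confirms $\Phi|_{\piC}$ is the identity on $\piC$ and $\Phi$ is the claimed isomorphism.
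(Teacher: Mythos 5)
Your proof is correct and follows essentially the same route as the paper: check that $\boxtimes$ makes $\Ga$ a group, produce $f_\ga$ via the universal property together with Lemma \ref{lift_of_real_paths_to_autom}, verify the homomorphism property by the two-case computation $(f_{\ga'}\circ f_\ga)(\xt)=(\ga\boxtimes\ga')\car\xt$, and invert $\Phi$ by sending $f$ to $p\circ\zeta$ for a path $\zeta$ from $\xt$ to $f(\xt)$. One small caveat: your suggested shortcut of invoking $t_{f\car y}=\tau_f^{-1}\circ t_y\circ f$ with $Y=\Xt$ does not literally apply, since $\Xt$ is generally not a real cover of $(X,\si)$ (there is no canonical $\tau$ on it) — but this was only offered as an alternative presentation and your main computation does not rely on it.
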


\begin{proof}
It is simplifying to denote by $\si(\ga)$ the (homotopy class of the) path $\si\circ\ga:[0;1]\lra X$. Using the fact that $\si(\ga\ast\ga') = \si(\ga) \ast \si(\ga')$, it is not hard to check that the product $\boxtimes$ on $\Ga$ is associative and that all elements have inverses (the neutral element being the homotopy class of the constant path at $x$). So $\Ga$ is a group.\\
Let $\ga$ and $\ga'$ be two elements in $\Ga$ and denote by $\Phi(\ga)= f_\ga$ and $\Phi(\ga')=f_{\ga'}$. Then $\Phi(\ga\boxtimes\ga')$ is the unique map $h:\Xt\lra\Xt$ covering either $\Id_X$ (if $\ga\boxtimes\ga'$ ends at $x$) or $\si$ (if $\ga\boxtimes\ga'$ ends at $\si(x)$) such that $$h(\xt) = (\ga\boxtimes\ga')\car\xt = \left\{
\begin{array}{cl}
(\ga\ast\ga')\car \xt & \mathrm{if}\ \ga'(1)=x,\\
(\si(\ga)\ast\ga')\car \xt & \mathrm{if}\ \ga'(1)=\si(x).
\end{array}
\right.$$
\noindent Let $\alpha:=\Id_X$ if $\ga'(1)=x$ and $\alpha:=\si$ if $\ga'(1)=\si(x)$, so that $$h(\xt)=(\alpha\circ\ga)\car(\ga'\car\xt) = (\alpha\circ\ga)\car f_{\ga'}(\xt).$$ Then $f_{\ga'}\circ\gat$ is a path starting at $f_{\ga'}(\gat(0))=f_{\ga'}(\xt)$ and satisfying $p\circ(f_{\ga'}\circ\gat)=(p\circ f_{\ga'})\circ \gat=(\alpha\circ p) \circ \gat = \alpha\circ(p\circ\gat) = \alpha\circ \ga$. So $$(\alpha\circ\ga)\car f_{\ga'}(\xt) = (f_{\ga'}\circ\gat)(1) = f_{\ga'}(\gat(1))= f_{\ga'}(\ga\car \xt)$$ and therefore $$h(\xt)=(\alpha\circ\ga)\car f_{\ga'}(\xt) = f_{\ga'}(\ga\car\xt)=f_{\ga'} \big(f_\ga(\xt)\big) = (f_{\ga'}\circ f_\ga)(\xt).$$ Moreover, the map $f_{\ga'}\circ f_\ga$ covers $\si$ if $h$ covers $\si$, and it covers $\Id_X$ if $h$ covers $\Id_X$. So, by uniqueness of such a map, $$\Phi(\ga\boxtimes\ga')=h=f_{\ga'}\circ f_\ga=\Phi(\ga')\circ\Phi(\ga),$$ which proves that $\Phi$ is a group homomorphism from $\Ga$ to $\piR$.\\
To conclude that $\Phi$ is an isomorphism of groups, we observe that the map sending $f\in\piR$ to the homotopy class of $\ga:=p\circ\zeta$, where $\zeta$ is any path from $\xt$ to $f(\xt)$ in $\Xt$, is an inverse for $\Phi$ (this again uses that $\Xt$ is simply connected to show that the given map is well-defined).
\end{proof}

 It will be convenient to refer to a \textit{surjective group homomorphism} such as $$\alpha: \piR\lra \Z/2\Z$$ as an \textit{augmentation} of $\Z/2\Z$. We will write $$\alpha: \piR\lra \Z/2\Z\lra 1$$ to keep in mind that $\alpha$ is a surjective group homomorphism. A homomorphism between two augmentations $\alpha:A\lra\Z/2\Z\lra 1$ and $\beta:B\lra \Z/2\Z\lra 1$ is a group homomorphism $\psi:A\lra B$ which makes the diagram 

$$\begin{CD}
A @>\alpha>> \Z/2\Z @>>> 1 \\
@VV{\psi}V @| \\
B @>\beta>> \Z/2\Z @>>> 1
\end{CD}$$ 

\noindent commute. In particular, $\psi$ sends $\ker\alpha$ to $\ker\beta$. In this language, \textit{the fundamental group of a real curve $(X,\si)$ is an augmentation of the absolute Galois group of $\R$}. A homomorphism between the fundamental groups $\piRp$ and $\piR$ of two real curves $(X',\si')$ and $(X,\si)$ is therefore a group homomorphism $\psi:\piRp\lra \piR$ making the diagram

$$\begin{CD}
1 @>>> \piCp @>>> \piRp @>>> \Z/2\Z @>>> 1\\
@. @VV{\psi|_{\piCp}}V @VV{\psi}V @| @.\\
1 @>>> \piC @>>> \piR @>>> \Z/2\Z @>>> 1\\
\end{CD}$$ 

\noindent commute. Using the description of fundamental groups of real curves given in Corollary \ref{SDP_structure_in_terms_of_paths} and Proposition \ref{description_in_terms_of_paths}, it is fairly straightforward to establish the functorial character of $\pi_1^\R$.

\begin{proposition}
Let $(X',\si')$ and $(X,\si)$ be two real curves and let $\phi:X'\lra X$ be any continuous map such that $\phi\circ\si'=\si\circ\phi$. Choose $x'\in X'$ and set $x=\phi(x')\in X$. Then there is a homomorphism of augmentations $$\begin{CD}
\piRp @>>> \Z/2\Z @>>> 1\\
@VV{\phi_*}V @| \\
\piR @>>> \Z/2\Z @>>> 1
\end{CD}$$ 
 In particular, the fundamental group of a real curve is a topological invariant.
\end{proposition}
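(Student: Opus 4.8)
The plan is to define $\phi_*$ by post-composing paths with $\phi$, the hypothesis $\phi\circ\si'=\si\circ\phi$ being precisely what makes this operation respect the structure carried by $\piRp$ and $\piR$. Fix universal covers $p':(\widetilde{X'},\widetilde{x'})\lra(X',x')$ and $p:(\Xt,\xt)\lra(X,x)$. First I would record a uniform description of the source: combining Corollary~\ref{SDP_structure_in_terms_of_paths} and Proposition~\ref{description_in_terms_of_paths} --- or, what amounts to the same, applying part~(1) of Lemma~\ref{lift_of_real_structure} together with the simple-connectedness of $\widetilde{X'}$, which is what underlies those statements and does not need $x'$ to be a real point --- an element of $\piRp$ is the same datum as a pair $[\ga,\eps']$, where $\eps'\in\{\Id_{X'};\si'\}$ and $\ga$ is a homotopy class (rel endpoints) of paths in $X'$ from $x'$ to $\eps'(x')$; here $\piCp$ is the set of pairs $[\ga,\Id_{X'}]$ (loops at $x'$), the augmentation $\alpha'$ sends $[\ga,\eps']$ to $\eps'$, and the group law is $[\ga_1,\eps_1']\cdot[\ga_2,\eps_2']=[\,(\eps_2'\circ\ga_1)\ast\ga_2\,,\,\eps_1'\eps_2'\,]$, where $\eps'\circ\ga$ means $\si'\circ\ga$ if $\eps'=\si'$ and $\ga$ if $\eps'=\Id_{X'}$ (this is the product $\boxtimes$ of Proposition~\ref{description_in_terms_of_paths}).

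For the target I would keep the intrinsic Definition~\ref{def_fund_gp_real_curve}, not a path description, and set $\phi_*[\ga,\eps']$ to be the unique $f\in\piR$ covering $\eps$ and with $f(\xt)=(\widetilde{\phi\circ\ga})(1)$, where $\eps:=\Id_X$ if $\eps'=\Id_{X'}$, $\eps:=\si$ if $\eps'=\si'$, and $\widetilde{\phi\circ\ga}$ is the lift of the path $\phi\circ\ga$ to $\Xt$ starting at $\xt$. This is legitimate: $\phi\circ\ga$ runs from $x=\phi(x')$ to $\phi(\eps'(x'))=\eps(x)$ --- the only place where the endpoints force us to use $\phi\circ\si'=\si\circ\phi$ --- so its lift ends in $p^{-1}(\{\eps(x)\})$, and such an $f$ exists and is unique by the universal property of $p$ when $\eps=\Id_X$, and by part~(1) of Lemma~\ref{lift_of_real_structure} when $\eps=\si$. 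It does not depend on the representative $\ga$, since post-composing with the continuous map $\phi$ preserves homotopies (Lemma~\ref{lifting_of_paths_and_homotopies}) and the endpoint of a lift depends only on the homotopy class. By construction $\alpha\circ\phi_*=\alpha'$, so $\phi_*$ is a morphism of augmentations; it automatically carries $\piCp=\ker\alpha'$ into $\piC=\ker\alpha$, where it restricts to the classical map on topological fundamental groups induced by $\phi$ (this uses Proposition~\ref{usual_fund_gp}), so the fuller commutative diagram in the statement holds as well.

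It remains to verify that $\phi_*$ is a group homomorphism, and this is a line-by-line copy of the proof that $\Phi$ is a homomorphism in Proposition~\ref{description_in_terms_of_paths}, the only new input being $\phi\circ\si'=\si\circ\phi$. Indeed $\phi\circ((\eps_2'\circ\ga_1)\ast\ga_2)=(\eps_2\circ(\phi\circ\ga_1))\ast(\phi\circ\ga_2)$ because $\phi$ commutes with concatenation and intertwines $\si'$ with $\si$; lifting this concatenation from $\xt$ and using that $\phi_*[\ga_2,\eps_2']$ is a self-map of $\Xt$ covering $\eps_2$, one gets, exactly as in that proof, that the endpoint of the lift is $(\phi_*[\ga_2,\eps_2']\circ\phi_*[\ga_1,\eps_1'])(\xt)$; since this composite covers the product of $\eps_1$ and $\eps_2$, uniqueness gives $\phi_*([\ga_1,\eps_1']\cdot[\ga_2,\eps_2'])=\phi_*[\ga_2,\eps_2']\circ\phi_*[\ga_1,\eps_1']=\phi_*[\ga_1,\eps_1']\cdot\phi_*[\ga_2,\eps_2']$ in $\piR$ (the group law there being $f\cdot g=g\circ f$). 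Finally $\phi\mapsto\phi_*$ is visibly functorial --- $(\psi\circ\phi)_*=\psi_*\circ\phi_*$ and $(\Id_X)_*=\Id_{\piR}$, checked the same way --- so a homeomorphism $\phi$ intertwining $\si'$ and $\si$ induces an isomorphism of augmentations, which is the asserted topological invariance of $\piR$.

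I do not expect a genuine obstacle --- the paper rightly calls this ``fairly straightforward''. Two points need mild care. The first is the opposite-group convention, which is why I would define $\phi_*$ via its effect on the base point $\xt$ and deduce the homomorphism property from compositions of covering transformations, rather than juggle the twisted products by hand. The second is the configuration where $x'$ is not a real point of $(X',\si')$ while $\phi(x')$ is a real point of $(X,\si)$: it is tempting to describe the target through the semi-direct product of Corollary~\ref{SDP_structure_in_terms_of_paths}, but that model twists by a different factor than $\boxtimes$ and yields an awkward formula, whereas keeping the intrinsic Definition~\ref{def_fund_gp_real_curve} for the target, as above, removes all case analysis.
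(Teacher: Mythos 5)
Your proposal is correct and, at bottom, is the same idea as the paper's proof: $\phi_*$ is ``post-composition by $\phi$'' on paths, with the hypothesis $\phi\circ\si'=\si\circ\phi$ used only to guarantee that the image path has the right endpoint. Where you differ is in the bookkeeping. The paper splits into three cases depending on whether $x'$ and $x=\phi(x')$ are real points, and in each case picks the most convenient path model (the semi-direct product of Corollary~\ref{SDP_structure_in_terms_of_paths} when the base point is real, the $\boxtimes$-group of Proposition~\ref{description_in_terms_of_paths} otherwise, and a hybrid when only $x$ is real); it then leaves the verification that $\phi_*$ is a homomorphism of augmentations as an unspelled ``in all cases.'' You instead use a single tagged-path description of the source, where an element is a pair $(\ga,\eps')$ with $\ga$ a path from $x'$ to $\eps'(x')$ and $\boxtimes$ as the product, and you keep the intrinsic Definition~\ref{def_fund_gp_real_curve} for the target, defining $\phi_*(\ga,\eps')$ as the deck transformation (via the universal property of $p$ or part (1) of Lemma~\ref{lift_of_real_structure}) determined by where the lift of $\phi\circ\ga$ sends $\xt$. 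This removes the case split and makes the group-homomorphism check a literal repetition of the argument already carried out for $\Phi$ in Proposition~\ref{description_in_terms_of_paths}. Your observation about why one should \emph{not} force the target into the semi-direct product form when $x'$ is not real but $x$ is (the twist there differs from $\boxtimes$) is precisely the wrinkle the paper's third case has to navigate, and your formulation avoids it cleanly. Everything else — well-definedness modulo homotopy, compatibility with the augmentations, restriction to $\piCp\to\piC$ being the usual $\phi_*$, and functoriality implying topological invariance — is stated and used correctly.
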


\begin{proof}
Due to the condition $\phi\circ\si'=\si\circ \phi$, if $x'\in (X')^{\si'}$, then $x\in X^\si$, so both $\piRp$ and $\piR$ are isomorphic to semi-direct products of the form given in Corollary \ref{SDP_structure_in_terms_of_paths} and $\phi_*$ may be defined by sending $$(\ga,\eps)\in\piCp\rtimes_{\si'}\Z/2\Z$$ to $(\phi(\ga),\eps)\in\piC\rtimes_{\si}\Z/2\Z$. If now $x'\notin (X')^{\si'}$, let us define $\phi_*$ in the following way, using Proposition \ref{description_in_terms_of_paths}~:
\begin{itemize}
\item if $\si(x)\neq x$, send $\ga$ (going either from $x'$ to $x'$, or from $x'$ to $\si'(x')$) to $\phi(\ga)$,
\item if $\si(x)=x$, send $\ga$ to $(\phi(\ga),\Id_X)\in\piC\rtimes_{\si}\Z/2\Z$ if $\ga(1)=x'$ and to $(\phi(\ga),\si)$ if $\ga(1)=\si'(x')$.
\end{itemize}
\noindent In all cases, $\phi_*$ is a homomorphism of augmentations of $\Z/2\Z$.
\end{proof}

\noindent It is somewhat more difficult to define $\phi_*$ without the help of paths but we can nonetheless sketch a proof of functoriality as follows. Given a continuous real homomorphism $\phi:(X',\si')\lra(X,\si)$ and a real cover $q:(Y,\tau)\lra (X,\si)$, there is a real cover $\phi^*q:(\phi^*Y,\phi^*\tau) \lra (X',\si')$ which satisfies $$(\phi^*Y)_{x'} = Y_{\phi(x')} \overset{\phi^*\tau}{\lra} (\phi^*Y)_{\si'(x')}=Y_{\si(\phi(x))}.$$ In particular, any automorphism of the set $(\phi^*Y)_{x'}$ defines an automorphism of the set $Y_{\phi(x')}=Y_x$, compatibly with homomorphism of real covers. Since the group $\piRp$ is the group of automorphisms of the fibre at $x'$ of \textit{all} real covers of $(X',\si')$ and $\piR$ is the group of automorphisms of the fibre at $x=\phi(x')$ of \textit{all} real covers of $(X,\si)$, the construction above defines a map $$\phi_*:\piRp\lra\piR,$$ which is a homomorphism of $\Z/2\Z$-augmentations.

We refer to \cite{Huisman_TLSE} for presentations of fundamental groups of compact connected Klein surfaces and we end this section with two simple examples.

\begin{example}
If $(X,\si)$ is a Klein surface with no real points, then $$\piR\simeq \pi_1^{\mathrm{top}}(X/\si;[x]).$$ Indeed, the canonical projection $q:X\lra X/\si$ is a covering map in this case and $\Xt$ is a universal covering space for $X/\si$. Moreover, a self-map $f:\Xt\lra\Xt$ covers $\Id_{X/\si}$ on $X/\si$ if and only if it covers $\Id_X$ or $\si$ on $X$, which exactly means that  $\pi_1^{\mathrm{top}}(X/\si;[x]) \simeq \piR$. Another way to see it is to say that the pullback map $q^*$ defined by the topological covering $q$ is an equivalence of categories between topological covers of $X/\si$ and real covers of $(X,\si)$, so the Galois groups are isomorphic.
\end{example}

\begin{example}
Let us consider the non-compact Klein surface $$X:=\CP^1\setminus\{s_1;s_2;s_3\},$$ where $s_1,s_2,s_3$ are distinct points in $\RP^1=\Fix(\si)\subset\CP^1$, and let us fix $x\notin\RP^1$ as a base point. We denote by $\eta_1$ the homotopy class of a path from $x$ to $\si(x)$ that passes in between $s_1$ and $s_2$ and does not wrap around any of the punctures. Likewise, $\eta_2$ passes between $s_2$ and $s_3$ and $\eta_3$ passes in between $s_3$ and $s_1$. Each $\eta_i$ is then of order $2$ in $\piR$ and $\zeta_1:=\eta_2 \boxtimes\eta_1$, $\zeta_2:=\eta_3 \boxtimes\eta_2$ and $\zeta_3:=\eta_1 \boxtimes\eta_3$ belong to $\piC$ and satisfy $\zeta_3\ast\zeta_2\ast\zeta_1=1$. It is clear that $\zeta_1,\zeta_2,\zeta_3$ generate $\piC$ and that $\eta_1,\eta_2,\eta_3$ generate $\piR$. More precisely, we have the following presentations $$\piC=<a_1,a_2,a_3\ |\ a_1a_2a_3=1>$$ and $$\piR=<b_1,b_2,b_3\ |\ b_1^2=b_2^2=b_3^2=1>.$$ The augmentation map $\piR\lra\Z/2\Z$ sends each $b_i$ to the non-trivial element of $\Z/2\Z$, so the elements $a_1=b_1b_2$, $a_2=b_2b_3$, $a_3=b_3b_1$ are sent to the trivial element. The sequence $$1\lra <b_1b_2,b_2b_3,b_3b_1> \lra <b_1,b_2,b_3\ |\ b_1^2=b_2^2=b_3^2=1> \lra \Z/2\Z \lra 1$$ is
 the homotopy exact sequence of $\piR$ in this case. The present example generalizes in a straightforward manner to any finite number of punctures $s_1,\cdots,s_n\in\RP^1$. Note that if we do not want the $s_i$ to lie in $\RP^1$, those that do not must come in pairs $(s_i,\si(s_i))$, due to the presence of the real structure on $X$.
\end{example}

\subsection{Galois theory of real covering spaces}\label{Galois_over_R} In this final subsection on the general theory of the fundamental group of a Klein surface, we explain in which sense $\piR$ is a Galois group.

\begin{theorem}[Galois theory of real covering spaces]
Let $(X,\si)$ be a connected Klein surface. The category $\Cov^{\ \R}_{(X,\si)}$ of real covering spaces of $(X,\si)$ is equivalent to the category of discrete left $\piR$-sets. An explicit pair of quasi-inverse functors is provided by the fibre-at-$x$ functor 

$$\Fib_x: \begin{array}{ccc}
Cov^\R_{(X,\si)} & \lra & \piR-\Sets \\
(q: (Y,\tau)\lra (X,\si)) & \lmt & q^{-1}(\{x\}) 
\end{array}
$$

\noindent and the functor

$$Q: \begin{array}{ccc}
 \piR-\Sets & \lra & Cov^\R_{(X,\si)} \\
F & \lmt & (\Xt \times_{\piC} F, \piR/\piC)
\end{array}\, .
$$
\end{theorem}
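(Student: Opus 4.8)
The plan is to show that $\Fib_x$ and $Q$ are quasi-inverse functors, bootstrapping from the complex statement (Theorem~\ref{Galois_theory_of_covers}) and carefully tracking how the real structure is carried along by each construction. First observe that, since a topological cover $q\colon Y\lra X$ of a Riemann surface carries a unique holomorphic structure making $q$ holomorphic, a real cover of $(X,\si)$ is the same data as a topological cover $q\colon Y\lra X$ together with an involution $\tau$ of $Y$ with $q\circ\tau=\si\circ q$ (such a $\tau$ is then automatically anti-holomorphic). The functor $\Fib_x$ is well-defined and functorial by Proposition~\ref{piR_acts_on_fibres}. As for $Q$: given a discrete left $\piR$-set $F$, the formula $h\star(\tilde z,v):=(\tilde z\cal h^{-1},\,h\car v)$ defines a left action of $\piR$ on $\Xt\times F$ extending the $\piC$-action used in Theorem~\ref{Galois_theory_of_covers}; since $\piC$ is normal in $\piR$ this action descends to $Y:=\Xt\times_{\piC}F=\piC\bs(\Xt\times F)$, and the action of the non-trivial class in $\piR/\piC\simeq\Z/2\Z$ is the involution $\tau_F$ appearing in the statement of the theorem. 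Choosing any $g\in\piR$ with $\alpha(g)\neq 1$ (such $g$ exists by Proposition~\ref{extension}), one has $\tau_F[\tilde z,v]=[\tilde z\cal g^{-1},\,g\car v]$, and since $p\circ g^{-1}=\si\circ p$ one checks $q\circ\tau_F=\si\circ q$; hence $Q(F):=(Y,\tau_F)$ is a real cover of $(X,\si)$, and $Q$ is readily seen to be functorial.

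Next I would prove $\Fib_x\circ Q\simeq\mathrm{Id}$. Since $\fibre$ is a free transitive right $\piC$-set, every element of $q^{-1}(\{x\})\subset Y$ has a unique representative of the form $[\xt,v]$ with $v\in F$, so $v\mapsto[\xt,v]$ is a bijection $F\overset{\simeq}{\lra}q^{-1}(\{x\})$. To see that it is $\piR$-equivariant, note that $t_{[\xt,v]}\colon\Xt\lra Y$ is the map $\tilde z\mapsto[\tilde z,v]$; then for $h\in\piR$ the action of Proposition~\ref{piR_acts_on_fibres} gives $h\car[\xt,v]=(\tau_h^{-1}\circ t_{[\xt,v]}\circ h)(\xt)=\tau_h^{-1}\bigl[\,h(\xt),v\,\bigr]$, and a short computation using the relation $[\xt\cal f,v]=[\xt,f\car v]$ for $f\in\piC$ (together with $\tau_F^{-1}=\tau_F$ in the case $h\notin\piC$) reduces this to $[\xt,h\car v]$. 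This identification is natural in $F$.

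For the other composite $Q\circ\Fib_x\simeq\mathrm{Id}$, start from a real cover $q\colon(Y,\tau)\lra(X,\si)$ and set $F:=q^{-1}(\{x\})$. The assignment $[\tilde z,v]\mapsto t_v(\tilde z)$, with $t_v$ as in Proposition~\ref{piR_acts_on_fibres}, is well-defined on $\Xt\times_{\piC}F$ because $t_{f\car v}=\tau_f^{-1}\circ t_v\circ f=t_v\circ f$ for $f\in\piC$, and it is a homomorphism of topological covers of $X$; by Theorem~\ref{Galois_theory_of_covers} it is in fact an isomorphism. It remains to check that it intertwines $\tau_F$ and $\tau$: applying it to $\tau_F[\tilde z,v]=[\tilde z\cal g^{-1},g\car v]$ yields $t_{g\car v}(\tilde z\cal g^{-1})$, and since $g\notin\piC$ we have $t_{g\car v}=\tau^{-1}\circ t_v\circ g$, so this equals $\tau\bigl(t_v(\tilde z)\bigr)$, as required. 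Naturality in $Y$ follows from the identity \eqref{morphism_and_action} in the proof of Proposition~\ref{monodromy_action_bis}, which applies verbatim in the real setting. Combined with the functoriality of $\Fib_x$ and $Q$, this gives the asserted equivalence of categories.

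The main obstacle is not a single deep point but the careful handling of the opposite-group convention on $\piR$ together with the simultaneous presence of a \emph{right} action on $\Xt$ and \emph{left} actions on $F$ and on the fibres: one must verify that $h\star(\tilde z,v):=(\tilde z\cal h^{-1},h\car v)$ really is a left $\piR$-action, that the descended involution $\tau_F$ is well-defined and squares to the identity, and that the two natural isomorphisms above are compatible with \emph{both} the covering projections and the real structures. Once these checks are organized as above --- in effect, replaying the proof of Theorem~\ref{Galois_theory_of_covers} with the extra datum of $\piR/\piC$-equivariance --- the proof is complete.
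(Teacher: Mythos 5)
Your proof is correct and follows essentially the same route as the paper's: you construct the quasi-inverse $Q$ by choosing an $h\in\piR$ covering $\si$, letting $(y,v)\mapsto(y\cal h^{-1},h\car v)$ descend to an involution $\tau_F$ of $\Xt\times_{\piC}F$, and checking independence of the choice of $h$ — exactly the paper's construction. The one noticeable difference is that the paper stops at this point with the assertion ``this construction is functorial and a quasi-inverse to $\Fib_x$,'' whereas you actually carry out the two verifications: the $\piR$-equivariant bijection $v\mapsto[\xt,v]$ giving $\Fib_x\circ Q\simeq\Id$, and the covering isomorphism $[\tilde z,v]\mapsto t_v(\tilde z)$ (shown to intertwine $\tau_F$ and $\tau$ via $t_{g\car v}=\tau^{-1}\circ t_v\circ g$) giving $Q\circ\Fib_x\simeq\Id$. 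The paper also sidesteps your direct check of $q\circ\tau_F=\si\circ q$ by the slicker observation that the case $F=\{\mathrm{pt}\}$ recovers $(X,\si)$ itself, but these are minor presentational differences; the substance of the argument is the same, and your extra detail is a welcome supplement to the paper's terse conclusion.
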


\begin{proof}
We saw in Proposition \ref{piR_acts_on_fibres} that the fibre at $x\in X$of a real cover $q:(Y,\tau)\lra (X,\si)$ was acted upon by $\piR$ and that homomorphism of real covers defined maps between fibres which are equivariant with respect to this action~: this shows that $\Fib_x$ is indeed a functor from $\Cov^{\ \R}_{(X,\si)}$ to $\piR-\Sets$.\\
Conversely, assume that $F$ is a discrete set endowed with a left action of the group $\piR$. In particular, $\piC\subset \piR$ acts on $\Xt\times F$ and, by Theorem \ref{Galois_theory_of_covers}, $\Xt\times_{\piC} F$ is a cover of $X$. To show that $\Xt\times_{\piC} F$ has a real structure, consider any $h\in\piR$ such that $h$ maps to $(-1)\in\Z/2\Z$ under the augmentation map (this is equivalent to choosing a lifting $\widetilde{\si}:\Xt\lra\Xt$ of $\si:X\lra X$; by the universal property of the universal covering map $p:\Xt\lra X$, a lifting $\widetilde{\si}$ such that $p\circ\widetilde{\si} = \si \circ p$ always exists, but it is not unique, moreover, in general one has $\widetilde{\si}^2\neq\Id_{\Xt}$). Then the map $$\Phi:\begin{array}{ccc} \Xt\times F & \lra & \Xt\times F \\ (y,v) &\lra & (y\cdot h^{-1}, h\cdot v)\end{array}$$ sends $\piC$-orbits to $\piC$-orbits~: for any $f\in\piC$, \begin{eqnarray*}
\Phi(y\cdot f^{-1}, f\cdot v) & = & \big((y\cdot f^{-1})\cdot h^{-1}, h\cdot(f\cdot v)\big)\\
& = & \big((y\cdot h^{-1})\cdot(hfh^{-1})^{-1}, (hfh^{-1})\cdot (h\cdot v)\big)\\
& \sim & (y\cdot h^{-1}, h \cdot v) = \Phi(y,v)
\end{eqnarray*} for $hfh^{-1}\in\piC \lhd \piR$. So $\Phi$ induces a map $$\Xt\times_{\piC} F \lra \Xt\times_{\piC} F,$$ which squares to the identity since $h^2\in\piC$. In other words, the group $$\piR/\piC \simeq \Z/2\Z$$ acts on $Y:=\Xt\times_{\piC} F$, so the latter is a Klein surface $(Y,\tau)$. Since the real structure $\si$ on $X$ is obtained as above by considering $F=\{\mathrm{pt}\}$, the Klein surface $(Y,\tau)$ is indeed a real cover of $(X,\si)$, with fibre $F$. Moreover the real structure $\tau$ on $Y$ does not depend on the choice of $h$, since any two choices differ by an element of $\piC$. This construction is functorial and a quasi-inverse to $\Fib_x$.
\end{proof}

\begin{multicols}{2}

\begin{exercise}
Let $V$ be a vector space and let $\bE$ be an affine space with group of translations $V$. Show that the group of affine transformations of $\bE$ is the semi-direct product $V\rtimes \GL(V)$ where $\GL(V)$ is the group of linear automorphisms of $V$. Recall that the composition law on this semi-direct product is $$(v,A)\cdot(w,B) = (v+Aw, AB).$$
\end{exercise}

\begin{exercise}\label{hol_and_anti_hol_bijections}
Let $X$ be a Riemann surface and let $\Aut^\pm(X)$ be the set of holomorphic and anti-holomorphic bijections from $X$ to $X$.\\ 
\textbf{1.} Show that $\Aut^\pm(X)$ is a group.\\ 
\textbf{2.} Show that if $\si$ is a real structure on $X$ then $\Aut^\pm(X)$ is isomorphic to $\Aut(X)\rtimes \Z/2\Z$ where $\Z/2\Z\simeq\{\Id_X;\si\}$ acts on $\Aut(X)$ by $\si\cdot f=\si\circ f \circ\si^{-1}$.
\end{exercise}

\begin{exercise}
Let $G,H$ be two groups and let $\phi:H\lra \Aut(G)$ be a group homomorphism (a left action of $H$ on $G$ by group automorphisms).\\
\textbf{1.} Show that $\phi$ induces a group homomorphism $\phi^\op: H^\op\lra \Aut(G^\op)^\op$ (a right action of $H^\op$ on $G^\op$ by group automorphisms).\\
\textbf{2.} Show that the map $$\begin{array}{ccc} (G\rtimes H)^\op & \lra & H^\op \ltimes G^\op \\ (g,h) & \lmt & (h,g) \end{array}$$ is a group isomorphism.
\end{exercise}

\begin{exercise}\label{opposite_product_in_pi1}
Consider the usual construction of $\Xt$ as the set of homotopy classes of paths starting at a given $x\in X$ and the action $\eta\curvearrowleft \ga:=\eta\ast\ga$ defined in \eqref{pi1_acts_on_universal_cover} for any $\ga\in\piC$ and any $\eta\in\Xt$. Show that, if $\eta$ is in fact the homotopy class of a \textit{loop} at $x$, the monodromy action $\ga\curvearrowright \eta$ of $\ga$ on $\eta$ as defined in \eqref{monodromy_action} is indeed $\gamma\ast\eta$ (the opposite product of $\eta$ and $\ga$ in $\piC$).
\end{exercise}

\begin{exercise}\label{represent_the_fibre_functor}
Use for instance the book by Szamuely (\cite{Szamuely}) to work out the details of Subsection \ref{reminder_cx_case}. Show in particular that $\Xt$ represents the functor $$\Fib_x:\Cov_X \lra \piC-\Sets.$$ 
\end{exercise}

\begin{exercise}\label{left_and_right_on_Xt}
Show that, if $p:(\Xt,\xt)\lra (X,x)$ is a universal cover and $f\in\Aut(\Xt/X)^{\op}$, we have, for all $y\in\fibre\subset \Xt$, then $f\circ t_y = t_{f(y)}$, where $t_y$ and $t_{f(y)}$ are defined as in Proposition \ref{monodromy_action_bis}.
\end{exercise}

\begin{exercise}
Show that an anti-holomorphic involution of the complex plane $\C$ is conjugate to $\tau_0: z\lmt\ov{z}$ by an element of the group $\Aut(\C) \simeq \{z\lmt az+b : a\in\C^*, b\in\C\}$ of biholomorphic transformations of $\C$.
\end{exercise}

\begin{exercise}
Show that an anti-holomorphic involution of the Poincar\'e upper half-plane $\cH=\{z\in\C\ |\ \Im{z} >0\}$ is conjugate to $\tau_0: z\lmt -\ov{z}$ by an element of the group $\Aut(\cH) \simeq \PSL(2;\R)$ of biholomorphic transformations of $\cH$.
\end{exercise}

\begin{exercise}\label{more_on_SDP_structures} (\textit{Suggested by Andr\'es Jaramillo}). 
Let $(X,\si)$ be a Real Riemann surface and let $\Xt$ be its universal covering Riemann surface.\\ 
\textbf{1.} Show that the existence of a map $\sit:\Xt\lra\Xt$ covering $\sit$ and satisfying $\sit^2=\Id_{\Xt}$ is equivalent to the existence of a splitting of the homotopy exact sequence \eqref{homotopy_exact_sequence}.\\
\textbf{2.} Assume that $\si(x)\neq x$. Show that there exists a splitting of the homotopy exact sequence if and only if there exists a path $\eta$ from $x$ to $\si(x)$ which is of order $2$ in $\piR$, i.e.\ such that $(\si\circ\eta)\ast\eta$ is homotopic to the constant path at $x$.\\
\textbf{3.} Show that this cannot happen if $\Fix(\si)=\emptyset$.
\end{exercise}

\begin{exercise}\label{isomorphic_SDP_structures}
Show that if $x$ and $x'$ lie in the same connected component of $X^\si$, (the homotopy class of) a path from $x$ to $x'$ which is contained in $X^\si$ defines an isomorphism of augmentations
$$\begin{CD}
\piC\rtimes_{\sit}\Z/2\Z @>>> \Z/2\Z \\
@V{\simeq}VV @| \\
\piC\rtimes_{\sit'}\Z/2\Z @>>> \Z/2\Z
\end{CD}
$$ where $\sit$ and $\sit'$ are defined as in Lemma \ref{lift_of_real_structure}, using respectively $x$ and $x'$.
\end{exercise}

\end{multicols}

\section{Representations of the fundamental group of a Klein surface}\label{section_reps_fund_gp}

\subsection{Linear representations of fundamental groups of real curves}

As seen in Section \ref{fund_gp_real_curves}, the fundamental group of a real curve $(X,\si)$ is the group $$\piR=\big\{f: \Xt \lra \Xt\ |\ \exists\,\si_f\in\{\Id_X;\si\},\, p\circ f = \si_f\circ p\big\}^{\op}$$ of self-diffeomorphisms of the universal covering space $\Xt$ which cover either $\Id_X$ or $\si$. The main feature of this group is that it contains the topological fundamental group as a normal subgroup, the kernel of a natural surjective group homomorphism to $\{\Id_X;\si\}^\op\simeq\Z/2\Z$~: \begin{equation}\label{short_exact_sequence} 1 \lra \piC \lra \piR {\lra} \Z/2\Z \lra 1.\end{equation} We recall that a surjective group homomorphism to $\Z/2\Z$ is called an \textit{augmentation} of $\Z/2\Z$ and that $\Xt$ has a unique Riemann surface structure turning the topological covering map $p:\Xt \lra X$ into a holomorphic map. Since $\si:X\lra X$ is anti-holomorphic, a map $f:\Xt\lra\Xt$ satisfying $p\circ f = \si \circ p$ is anti-holomorphic (and of course, if $p\circ f = p$, then $f$ is holomorphic), i.e.\ $\piR$ acts on $\Xt$ by holomorphic and anti-holomorphic transformations, depending on whether $f\in\piR$ covers $\Id_X$ or $\si$ on $X$. As such, it is natural, when studying complex linear representations of $\piR$ to have the elements of $\piC\subset\piR$ act by $\C$-linear transformations of a complex vector space $V$, and those of $\piR\setminus\piC$ act by $\C$-anti-linear transformations of $V$. Such transformations are in particular $\R$-linear and form a group that we denote by $\GL^\pm(V)$ and which contains $\GL(V)$ as an index two subgroup. Explicitly, $\GL^\pm(V)$ is the group of $\C$-linear and $\C$-anti-linear transformations of $V$. What we said about elements of $\piR\setminus\piC$ acting on $V$ by transformations in $\GL^\pm(V)\setminus \GL(V)$ while elements of $\piC$ act by transformations in $\GL(V)$ then precisely amounts to saying that we have a \textit{homomorphism of augmentations} of $\Z/2\Z$, the latter being, by definition, a group homomorphism $\rho: \piR \lra \GL^\pm(V)$ which makes the following diagram commute
\begin{equation}\label{linear_rep}
\begin{CD}
1 @>>> \piC @>>> \piR @>>> \Z/2\Z @>>> 1 \\
@. @VV{\rho|_{\piC}}V @VV{\rho}V @| @.\\
1 @>>> \GL(V) @>>> \GL^\pm(V) @>>> \Z/2Z @>>> 1
\end{CD}
\end{equation}

\noindent where the map $\GL^\pm(V)\lra \Z/2\Z$ takes a $\C$-anti-linear transformation of $V$ to the non-trivial element of $\Z/2\Z$. We will call such a map $\rho:\piR \lra \GL^\pm(V)$ a \textit{linear representation} of $\piR$. If we choose a Hermitian metric $h$ on $V$, we can likewise consider the group $\U^\pm(V,h)$ of $\C$-linear and $\C$-anti-linear isometries of $(V,h)$. It contains the unitary group $\U(V,h)$ as an index two subgroup. A \textit{unitary representation} is then by definition a homomorphism of augmentations
\begin{equation}\label{unitary_rep}
\begin{CD}
1 @>>> \piC @>>> \piR @>>> \Z/2\Z @>>> 1 \\
@. @VV{\rho|_{\piC}}V @VV{\rho}V @| @.\\
1 @>>> \U(V,h) @>>> \U^\pm(V,h) @>>> \Z/2\Z @>>> 1.
\end{CD}
\end{equation}

A \textit{linear real structure} on a complex vector space $V$ is a $\C$-anti-linear involution $\alpha$ of $V$. In particular, $V$ is then the complexification of $V^\alpha$, the real subspace of $V$ consisting of fixed points of $\alpha$ in $V$: $V=V^\alpha\otimes_\R \C$ and we will call the pair $(V,\alpha)$ a \textit{real vector space}. Linear real structures on a complex vector space $V$ always exist (see Exercise \ref{linear_real_structures}). If $V$ has a Hermitian metric $h$, we may assume that $\alpha$ is an (anti-linear) isometry of $h$ and we will say that $\alpha$ and $h$ are \textit{compatible}. Once a real structure has been chosen, the groups $\GL^\pm(V)$ and $\U^\pm(V,h)$ have canonical structures of semi-direct product.

\begin{lemma}\label{psd_structure_for_GL}
Let $(V,\alpha)$ be a real vector space, with a compatible Hermitian metric. Then the group $\GL^\pm(V)$ of $\C$-linear and $\C$-anti-linear transformations of $V$ is isomorphic to the semi-direct product $$\GL(V)\rtimes_\alpha \Z/2\Z$$ where $\{\Id_V;\alpha\}\simeq\Z/2\Z$ acts on $\GL(V)$ by $$\Ad{\alpha}u := \alpha \circ u \circ \alpha^{-1}.$$ Likewise, the group $\U^\pm(V,h)$ of $\C$-linear and $\C$-anti-linear isometries of $(V,h)$ is isomorphic to the semi-direct product $$\U(V) \rtimes_\alpha \Z/2\Z.$$
\end{lemma}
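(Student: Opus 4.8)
The plan is to reproduce, in this linear-algebraic setting, the argument used for Proposition \ref{SDP_structure} (a real point of a Klein surface splitting the homotopy exact sequence). First I would make the bottom row of \eqref{linear_rep} into a genuine short exact sequence: since the composite of two $\C$-anti-linear maps is $\C$-linear and the composite of a $\C$-linear map with a $\C$-anti-linear map is $\C$-anti-linear, the assignment sending an element of $\GL^\pm(V)$ to $0\in\Z/2\Z$ if it is $\C$-linear and to $1$ if it is $\C$-anti-linear is a well-defined surjective group homomorphism $\GL^\pm(V)\lra\Z/2\Z$ whose kernel is exactly $\GL(V)$. Thus $1\lra\GL(V)\lra\GL^\pm(V)\lra\Z/2\Z\lra 1$ is exact.

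Next I would use the chosen linear real structure $\alpha$ (an anti-linear involution, which exists by Exercise \ref{linear_real_structures}) to split this sequence. Because $\alpha$ is $\C$-anti-linear and $\alpha^2=\Id_V$, the subset $\{\Id_V;\alpha\}$ is a subgroup of $\GL^\pm(V)$, isomorphic to $\Z/2\Z$ and mapping isomorphically onto the quotient. Conjugation $\Ad{\alpha}:u\lmt\alpha\circ u\circ\alpha^{-1}$ is an automorphism of $\GL^\pm(V)$; since $\alpha$ and $\alpha^{-1}=\alpha$ are anti-linear, it carries $\C$-linear maps to $\C$-linear maps (anti $\circ$ linear $\circ$ anti $=$ linear), hence restricts to an automorphism of $\GL(V)$, and $\Ad{\alpha}\circ\Ad{\alpha}=\Ad{\alpha^2}=\mathrm{id}$, so we obtain a left action of $\{\Id_V;\alpha\}\simeq\Z/2\Z$ on $\GL(V)$ by the stated formula.

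Then I would define
$$\Theta:\begin{array}{ccc} \GL(V)\rtimes_\alpha\{\Id_V;\alpha\} & \lra & \GL^\pm(V) \\ (u,\eps) & \lmt & u\circ\eps \end{array}$$
and check it is an isomorphism. It is a homomorphism because the product $(u,\eps)(v,\eta)=(u\circ\Ad{\eps}v,\ \eps\eta)$ is sent to $u\circ\eps\circ v\circ\eps^{-1}\circ\eps\circ\eta=(u\circ\eps)\circ(v\circ\eta)$. It is surjective because a $\C$-anti-linear $g\in\GL^\pm(V)$ equals $(g\circ\alpha^{-1})\circ\alpha$ with $g\circ\alpha^{-1}\in\GL(V)$, while $\C$-linear elements are manifestly in the image. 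It is injective because if $u\circ\eps=\Id_V$ then $u\circ\eps$ is $\C$-linear, which forces $\eps=\Id_V$ and hence $u=\Id_V$. This gives the first assertion, $\GL^\pm(V)\simeq\GL(V)\rtimes_\alpha\Z/2\Z$.

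Finally, for the unitary statement the only extra input is the compatibility of $\alpha$ with $h$: then $\alpha$ is an anti-linear isometry, so $\alpha\in\U^\pm(V,h)$, the augmentation above restricts to a short exact sequence $1\lra\U(V,h)\lra\U^\pm(V,h)\lra\Z/2\Z\lra 1$ (using $\U^\pm(V,h)\cap\GL(V)=\U(V,h)$), conjugation by $\alpha$ preserves $\U(V,h)$, and for an anti-linear isometry $g$ the map $g\circ\alpha^{-1}$ is a $\C$-linear isometry; the argument of the previous paragraph then applies verbatim to give $\U^\pm(V,h)\simeq\U(V)\rtimes_\alpha\Z/2\Z$. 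I do not expect a genuine obstacle here — the statement is the exact linear analogue of Proposition \ref{SDP_structure} — but the point that requires attention is the systematic bookkeeping of linearity versus anti-linearity under composition (needed to see that $\Ad{\alpha}$ preserves $\GL(V)$, that $\Theta$ is well defined and injective, and that $g=(g\circ\alpha^{-1})\circ\alpha$ lands in the intended factors), together with the explicit use of $\alpha$--$h$ compatibility in the unitary case, without which neither does $\Ad{\alpha}$ preserve $\U(V,h)$ nor does $g\circ\alpha^{-1}$ remain an isometry.
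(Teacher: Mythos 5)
Your proof is correct. The paper leaves Lemma \ref{psd_structure_for_GL} as an exercise (Exercise \ref{proof_psd_structure_for_GL}) and does not supply its own argument, but, as you observe, it proves the directly analogous Proposition \ref{SDP_structure} for $\piR$; your argument transplants exactly that scheme — exhibit the short exact sequence, note that $\alpha$ gives a splitting $\Z/2\Z\to\GL^\pm(V)$ since $\alpha^2=\Id_V$, and check that $(u,\eps)\mapsto u\circ\eps$ is a group isomorphism — into the linear-algebraic setting, with the requisite bookkeeping of linearity versus anti-linearity under composition and the use of $\alpha$--$h$ compatibility in the unitary case, so it is exactly the intended solution.
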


\noindent Note that $\alpha$ is naturally also an element of $\GL^\pm(V)$ (resp. $\U^\pm(V,h)$), which is sent to $\alpha$ under the augmentation map $\GL^\pm(V) \lra \Z/2\Z$, and that if a $\C$-linear transformation $u$ has matrix $A$ in a given basis of $V$ consisting of $\alpha$-fixed vectors (a real basis), then $\alpha\circ u\circ \alpha^{-1}$ has matrix $\ov{A}$ in that basis. The proof of Lemma \ref{psd_structure_for_GL} is proposed as an exercise (see Exercise \ref{proof_psd_structure_for_GL}).

\begin{definition}[Linear and unitary representations]\label{reps_of_the_fund_gp}
A \textbf{linear} (resp. \textbf{unitary}) \textbf{representation} of $\piR$ is a homomorphism of augmentations of $\Z/2\Z$ from $\piR$ to $\GL^\pm(V)$ (resp. $\U^\pm(V,h)$). Explicitly, it is a group homomorphism $$\rho:\piR \lra \GL^\pm(V)$$ (resp. $\rho:\piR \lra \U^\pm(V,h)$) compatible with the augmentation homomorphism $\GL^\pm(V)\lra \Z/2\Z$ (resp. $\U^\pm(V,h) \lra \Z/2\Z$) in the sense of diagram \eqref{linear_rep} (resp. \eqref{unitary_rep}). The set of such representations will be denoted $$\Hom_{\Z/2\Z}(\piR;\GL^\pm(V))$$ (resp. $\Hom_{\Z/2\Z}(\piR;\U^\pm(V,h))$). It has a natural topology, induced by the product topology of the space of all maps from $\piR$ to $\GL^\pm(V)$.
\end{definition}

\noindent Note that, because of the short exact sequence \eqref{short_exact_sequence}, $\piR$ is finitely generated, and that $\GL^\pm(V)$ is a subgroup of the group of $\R$-linear automorphisms of the finite-dimensional, complex vector space $V$, which has a canonical topology, so the topology referred to in Definition \ref{reps_of_the_fund_gp} is the subspace topology inside a product of a finite number of copies of $\GL^\pm(V)$.

The natural question to ask next is the following~: when should two such representations be considered equivalent? A priori, one could conjugate a given map $\rho:\piR \lra \GL^\pm(V)$ by any $u\in \GL^\pm(V)$ without altering the commutativity of diagram \eqref{linear_rep}, since $\GL(V)$ is normal in $\GL^\pm(V)$. We will, however, consider a more restrictive notion of equivalence, where $u$ is taken to lie in $\GL(V)$. This will guarantee the existence of a map from the upcoming representation space $$\HomR/\GL(V)$$ to the usual representation space $$\HomC/\GL(V).$$ As a matter of fact, it also implies that the representation space of $\piR$ only depends on the $\Z/2\Z$-augmentation class of the target group (in our case, $\GL^\pm(V)$), not on its isomorphism class as an extension of $\Z/2\Z$ by $\piC$. 

In what follows, as we want to consider quotient spaces that have a Hausdorff topology, we will restrict our attention to \textit{unitary} representation spaces. The discussion above follows through immediately to the unitary case.

\begin{definition}[Unitary representation variety]\label{rep_var}
Two unitary representations of $\piR$ are called equivalent if they lie in a same orbit of the (continuous) action of $\U(V,h)$ on $$\HomRunit$$ defined by $$u\cdot \rho = \Ad{u}\circ \rho.$$ By definition, the \textbf{unitary representation variety} of $\piR$ into $\U^\pm(V,h)$ is the orbit space $$\HomRunit/\U(V,h),$$ endowed with the quotient topology.
\end{definition}

\noindent Note that we call this representation variety the \textit{unitary} representation variety of $\piR$ even though the target group is $\U^\pm(V,h)$, not $\U(V,h)$. To lighten notation, we will fix once and for all a Hermitian metric $h$ on $V$ and simply denote by $\U(V)$ and $\U^\pm(V)$ the groups $\U(V,h)$ and $\U^\pm(V,h)$. Thanks to our choice of equivalence relation between representations of $\piR$, there is a map from the unitary representation variety of $\piR$ to the unitary representation variety of $\piC$ : \begin{equation*}
\Phi:
\begin{array}{ccc}
 \HomRunits / \U(V) & \lra & \HomCunit / \U(V)\\
\left[\rho\right] & \lmt & \left[\rho|_{\piC}\right] .
 \end{array}
 \end{equation*}
 
\noindent In the next subsection, we study some properties of this map.

\subsection{The real structure of the usual representation variety}

To be able to say something about the map $\Phi$ above, we need to fix an additional structure, namely a linear real structure $\alpha$ on $V$. In particular, the group $\U^\pm(V)$ is now canonically isomorphic to the semi-direct product $\U(V)\rtimes_\alpha\Z/2\Z$, by Lemma \ref{psd_structure_for_GL}. The first step in our study is to understand what the real structure of the unitary representation variety of $\piC$ is. Then, as a second step, we will show that $\Phi$ maps the representation variety of $\piR$ to real points of the representation variety of $\piC$.

The existence of the short exact sequence $$1 \lra \piC \lra \piR \lra \Z/2\Z \lra 1$$ implies the existence of a group homomorphism $$\Z/2\Z \lra \Out(\piC):= \Aut(\piC) / \Inn(\piC)\, .$$ Indeed, let $f\in\piR$ go to $\si\in\Z/2\Z$ under the augmentation map. Then $f$ acts by conjugation on the normal subgroup $\piC$ of $\piR$. Any other choice of an element $f'$ in the fibre at $\si$ differs from $f$ by an element $f^{-1}f'$ of $\piC$, so the associated automorphism of $\piC$ differs from that induced by $f$ by an inner one (indeed, $\mathrm{Ad}_{f}^{-1}\circ\Ad{f'} = \Ad{f^{-1}f'}$ with $f^{-1}f'\in\piC$) and we have a map $\Z/2\Z \lra \Out(\piC)$. This map is indeed a group homomorphism (see Exercises \ref{outer_hom} and \ref{case_of_a_real_point} for more information on this map). Now recall (see Exercise \ref{check_outer_action}) that the group $\Out(\piC)$ acts to the left on the representation variety $$\HomCunit/\U(V)$$ by \begin{equation}\label{outer_action}[\varphi]\cdot [\rho] = [\rho\circ \varphi^{-1}]\end{equation} where $[\varphi]$ is the outer class of $\varphi\in\Aut(\piC)$. This action is called the \textit{outer action}. In our real algebraic context, it yields an action of $\Z/2\Z$ on the representation variety of $\piC$, since we have a map $\Z/2\Z\lra \Out(\piR)$. Explicitly, if $\sit$ is in the fibre above $\si$ of the augmentation map $\piR\to\Z/2\Z$, the corresponding action on $\HomCunit/\U(V)$ is defined by $$[\rho] \lmt [\rho \circ \mathrm{Ad}_{\sit}^{-1}]$$ where $\mathrm{Ad}_{\sit}^{-1}=\Ad{\sit^{-1}}\in\Aut(\piC)$ is conjugation by $\sit^{-1}$ in $\piC$ (we recall that $\sit$ is in general not of order $2$ in $\piR$, so it is important to keep track of the $-1$ exponent above). The resulting element $[\rho\circ\mathrm{Ad}_{\sit}^{-1}]$ of the representation variety does not depend on the choice of $\sit$ in the fibre above $\si$ and, since $\sit^2\in\piC$, we get an action of $\Z/2\Z$ on $\HomCunit/\U(V)$. This involution of the unitary representation variety of $\piC$ is defined in a purely algebraic fashion and makes no use of any additional structure of the target group $\U(V)$. Recall now that the latter has a Galois conjugation $$\Ad{\alpha}: \begin{array}{ccc} \U(V) & \lra & \U(V) \\ u & \lmt & \alpha \circ u \circ \alpha^{-1} \end{array}$$ \textit{coming from the real structure $\alpha$ on $V$} (the composition takes place in $\U^\pm(V)$ but the resulting element indeed lies in $\U(V)$). Then we have an involution 

$$
\begin{array}{ccc}
\HomCunit / \U(V) & \lra & \HomCunit / \U(V) \\
\left[\rho\right] & \lmt & \left[\Ad{\alpha}\circ\rho\right].
\end{array}
$$

\noindent It is not uncommon to denote by $\ov{\rho}:=\Ad{\alpha}\circ\rho$ (indeed, we have seen that complex conjugation is what happens to matrices in an appropriate basis of $V$ when conjugated by the real stucture $\alpha$). Note that fixed points of $\rho\lmt\ov{\rho}$ in particular preserve the real subspace $V^\alpha \subset V$~: in other terms, the representation $\rho$ is $\GL(V^{\alpha})$-valued when $\ov{\rho}=\rho$ (in general, not all fixed points of the induced involution of $\HomCunit/\U(V)$, however, come from such a $\rho$). We now put together the two involutions that we have just considered and obtain a real structure of the representation variety with respect to which representations of $\piR$ behave remarkably (they are real points of this real structure). Note that we call it a real structure because it can be shown that it is anti-holomorphic with respect to the Narasimhan-Seshadri complex structure of the representation variety (see for instance \cite{Sch_JSG}) but that, at this stage, it is just an involution. We denote by $$\cR:= \HomCunit / \U(V) $$ the unitary representation variety of $\piC$, \begin{equation}\label{real_rep_var_psd} \cRsialpha:= \HomRunitspsd / \U(V)\end{equation} the unitary representation variety of $\piR$ and
\begin{equation}\label{real_structure_rep_var}
\kappa: \begin{array}{ccc}
\cR & \lra & \cR \\
\left[\rho\right] & \lmt & \left[\Ad{\alpha}\circ \rho\circ\mathrm{Ad}_{\sit}^{-1}\right]
\end{array}
\end{equation}

\noindent the involution of the representation variety $\cR$ obtaining by combining the previous two. Note that $\kappa$ depends both on $\si$ and $\alpha$.

\begin{theorem}\label{real_points_of_kappa}
The map $$\Phi: \begin{array}{ccc}
\cRsialpha & \lra & \cR \\
\left[\rho\right] & \lmt & \left[\rho|_{\piC}\right]
\end{array}
$$ sends $\cRsialpha$ to points of $\Fix(\kappa)\subset \cR$. It is not surjective in general.
\end{theorem}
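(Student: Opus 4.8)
The plan is to prove the inclusion $\Phi(\cRsialpha)\subseteq\Fix(\kappa)$ by a short computation with a fixed lift of $\si$, and then to establish the non-surjectivity assertion by producing an explicit real curve together with a $\kappa$-fixed unitary character of $\piC$ that fails to extend to $\piR$.

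First I would fix an element $\sit\in\piR$ lying above $\si$ under the augmentation $\alpha\colon\piR\to\Z/2\Z$ — precisely the element used in \eqref{real_structure_rep_var} to define $\kappa$. Let $\rho\in\HomRunitspsd$, and identify $\U(V)\rtimes_\alpha\Z/2\Z$ with $\Upm(V)$ via Lemma \ref{psd_structure_for_GL}. Since $\rho$ is a homomorphism of augmentations, $\rho(\sit)$ lies in $\Upm(V)\setminus\U(V)=\U(V)\,\alpha$, so $\rho(\sit)=u_0\,\alpha$ for a unique $u_0\in\U(V)$. For $\ga\in\piC$ one computes, inside $\Upm(V)$,
$$
\rho\bigl(\sit\,\ga\,\sit^{-1}\bigr)=\rho(\sit)\,\rho(\ga)\,\rho(\sit)^{-1}=u_0\bigl(\alpha\,\rho(\ga)\,\alpha^{-1}\bigr)u_0^{-1}=\Ad{u_0}\bigl(\Ad{\alpha}(\rho(\ga))\bigr),
$$
and $\Ad{\alpha}(\rho(\ga))$ again lies in $\U(V)$ because $\alpha$ is compatible with $h$. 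Equivalently $\rho|_{\piC}\circ\Ad{\sit}=\Ad{u_0}\circ\Ad{\alpha}\circ\rho|_{\piC}$, so $\rho|_{\piC}=\Ad{u_0}\circ\bigl(\Ad{\alpha}\circ\rho|_{\piC}\circ\Ad{\sit}^{-1}\bigr)$. As $u_0\in\U(V)$, both sides represent the same point of $\cR=\HomCunit/\U(V)$, and comparison with \eqref{real_structure_rep_var} reads $\Phi([\rho])=[\rho|_{\piC}]=\kappa([\rho|_{\piC}])$, i.e.\ $\Phi([\rho])\in\Fix(\kappa)$. Nothing here depends on the choice of $\sit$ in its coset, which is already built into the definition of $\kappa$.

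For the non-surjectivity I would first isolate \emph{why} it can fail. Realizing a given $[\rho_0]\in\Fix(\kappa)$ as some $\Phi([\rho])$ amounts, after conjugation, to extending $\rho_0$ over $\piR$, that is, to choosing $\rho(\sit)=w\,\alpha\in\Upm(V)$ with both $\Ad{w}\circ\Ad{\alpha}\circ\rho_0=\rho_0\circ\Ad{\sit}$ (solvable for $w$ exactly because $[\rho_0]$ is $\kappa$-fixed) \emph{and} $\rho(\sit)^2=\rho_0(\sit^2)$; since $(w\alpha)^2=w\,\Ad{\alpha}(w)$, this second equation is a further constraint that need not be satisfiable, even after modifying $w$ by an element of the centraliser of $\rho_0(\piC)$ in $\U(V)$. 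I would then make this concrete in rank one. Take $(X,\si)$ a real elliptic curve of topological type $(1,0,1)$ — one exists by Weichold's Theorem \ref{Weichold_thm}, for instance $X=\C/\Z[i]$ with $\si(z)=\ov{z}+\tfrac12$. Then $X^\si=\emptyset$, so $\piR$ is the fundamental group of the Klein surface $X/\si$, namely the Klein bottle group $\langle a,b\mid b\,a\,b^{-1}=a^{-1}\rangle$, with $\piC=\langle a,b^2\rangle\cong\Z^2$ and $\sit=b$ (so $\sit^2=b^2\in\piC$). Let $V=\C$ with $\alpha$ complex conjugation and $h$ the standard Hermitian metric. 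A unitary character $\rho_0\colon\piC\to\U(1)$ is given by $(\rho_0(a),\rho_0(b^2))=(\lambda,\mu)\in\U(1)^2$; since $\U(1)$ is abelian one reads from \eqref{real_structure_rep_var} that $\kappa(\lambda,\mu)=(\lambda,\ov{\mu})$, so $\Fix(\kappa)$ is the disjoint union of the two circles $\{\mu=1\}$ and $\{\mu=-1\}$. On the other hand any $\rho\colon\piR\to\Upm(1)$ has $\rho(b)=w\,\alpha$ with $w\in\U(1)$, hence $\rho(b^2)=(w\alpha)^2=w\,\ov{w}=|w|^2=1$; thus $\Phi$ lands inside the circle $\{\mu=1\}$ and misses the component $\{\mu=-1\}$ of $\Fix(\kappa)$, which proves non-surjectivity.

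The main obstacle is this second part: the inclusion into $\Fix(\kappa)$ is a purely formal manipulation with $\sit$, whereas strictness requires identifying the obstruction to extending a $\kappa$-fixed representation over the (generally non-split) extension $\piR$ and checking that it is nonzero — which the Klein-bottle example does in the simplest possible way, the obstruction there being already visible at the level of $\rho(\sit)^2$.
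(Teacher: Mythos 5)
Your argument for the inclusion $\Phi(\cRsialpha)\subseteq\Fix(\kappa)$ is, after unwinding notation, exactly the paper's: where you factor $\rho(\sit)=u_0\alpha$ and exhibit $\Ad{u_0}$ as the conjugating element, the paper keeps $\rho(\sit)$ unfactored and observes that $\alpha\rho(\sit)^{-1}$ lies in $\U(V)$; these are the same element, since $\alpha(u_0\alpha)^{-1}=u_0^{-1}$. Where the two proofs genuinely diverge is the non-surjectivity claim. The paper does not prove it at all — it refers the reader to \cite{Sch_JSG} — whereas you supply a complete, self-contained counterexample. Your Klein-bottle computation is correct: for $X=\C/\Z[i]$, $\si(z)=\ov z+\tfrac12$, the fixed-point set of $\si$ is empty (since $z-\ov z\in i\R$ can never equal $\tfrac12$ modulo $\Z[i]$), so $\piR\simeq\langle a,b\mid bab^{-1}=a^{-1}\rangle$, $\piC=\langle a,b^2\rangle\simeq\Z^2$, $\sit=b$; with $V=\C$, $\alpha$ complex conjugation, one gets $\kappa(\lambda,\mu)=(\lambda,\ov\mu)$ because $\Ad{b}^{-1}(a)=a^{-1}$ and $\Ad{b}^{-1}(b^2)=b^2$, so $\Fix(\kappa)$ is the two circles $\{\mu=\pm1\}$; and any extension to $\piR$ forces $\mu=\rho(b)^2=(w\alpha)^2=w\ov w=1$, so the circle $\{\mu=-1\}$ is missed. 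You might also note, to close the loop, that the braid relation $\rho(bab^{-1})=\rho(a)^{-1}$ is automatic for a character of the abelian $\U(1)$, so $\{\mu=1\}$ is exactly the image of $\Phi$. In short: first half matches the paper; second half is a welcome concrete improvement, since it replaces an external citation with the simplest possible example, and your framing (the constraint $\rho(\sit)^2=\rho_0(\sit^2)$ as the obstruction beyond $\kappa$-fixedness) is exactly the right conceptual explanation of why the map can fail to be onto.
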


\begin{proof}
The involution $\kappa$  takes the class of a group homomorphism $$\chi:\piC \lra \U(V)$$ to the class of $$\Ad{\alpha}\circ\chi\circ\mathrm{Ad}_{\sit}^{-1}: \begin{array}{ccc} \piC & \lra & \U(V)\\ f & \lmt & \alpha\chi(\sit^{-1} f \sit) \alpha^{-1}.\end{array}$$ But, if ${\chi}$ lies in the image of $\Phi$, then $\chi$ extends to a group homomorphism $$\rho:\piR\lra \U(V)\rtimes_{\alpha} \Z/2\Z.$$ So 

\begin{eqnarray*}
\Ad{\alpha}\circ\chi\circ\mathrm{Ad}_{\sit}^{-1} (f) & = & \Ad{\alpha}\circ\rho\circ\mathrm{Ad}_{\sit}^{-1} (f) \\
& = & \alpha\, \big( \rho(\sit^{-1}) \rho(f) \rho(\sit) \big) \,\alpha^{-1} \\
& = & \big(\alpha\rho(\sit)^{-1}\big)\, \rho(f)\, \big(\alpha\rho(\sit)^{-1}\big)\\
& = & \big( \underbrace{\alpha\rho(\sit)^{-1}}_{\in \U(V)}\big) \, \chi(f)\, \big(\underbrace{\alpha\rho(\sit)^{-1}}_{\in \U(V)}\big)^{-1}.
\end{eqnarray*}

\noindent Indeed, $\alpha$ and $\rho(\sit)$ both lie in the fibre above $\si$ inside $\U(V)\rtimes_{\alpha} \Z/2\Z$, so $\alpha\rho(\sit)^{-1}$ lies in the fibre above the trivial element, hence belongs to $\U(V)$. Therefore, $(\Ad{\alpha}\circ\chi\circ\mathrm{Ad}_{\sit}^{-1}) \sim_{\U(V)} \chi$, which means that $[\chi]\in\Fix(\kappa)$. We refer to \cite{Sch_JSG} for the proof that $\mathrm{Im}\,\Phi\neq\Fix(\kappa)$ in general.
\end{proof}

\begin{multicols}{2}

\begin{exercise}\label{linear_real_structures}
Show that the set of (linear) real structures on a complex vector space $V$ is in bijection with the homogeneous space $\GL(n;\C)/\GL(n;\R)$, where $n$ is the complex dimension of $V$.
\end{exercise}

\begin{exercise}\label{proof_psd_structure_for_GL}
Prove Lemma \ref{psd_structure_for_GL}. Prove that if a $\C$-linear transformation $u$ has matrix $A$ in a given real basis of $(V,\alpha)$, then $\alpha\circ u\circ\alpha^{-1}$ has matrix $\ov{A}$, the complex conjugate of $A$.
\end{exercise}

\begin{exercise}\label{outer_hom}
Let $G$ be a group. Consider $g\in G$ and $f\in\Aut(G)$. Show that $f\circ\Ad{g}\circ f^{-1} = \Ad{f(g)}$ and that if there is a short exact sequence $$1 \lra G \lra H \lra I \lra 1$$ then there is a group homomorphism $$I\lra \Out(G):=\Aut(G)/\Inn(G)$$ where $$\Inn(G) := \{\Ad{g} : g\in G\}$$ is the group of inner automorphisms of $G$.
\end{exercise}

\begin{exercise}\label{check_outer_action}
Check that the outer action \eqref{outer_action} is well-defined.
\end{exercise}

\begin{exercise}\label{case_of_a_real_point}
\textbf{a.} Show that, if $\si(x)=x$, then the homomorphism $$\Z/2\Z \lra \Out(\piC)$$ sends $\si\in\Z/2\Z$ to the group auto\-morphism $$\si_*:\piC \lra \piC$$ induced by $\si:X\lra X$.\\ \textbf{b.} Show that, in this case, $\Z/2\Z$ in fact already acts to the left on $\HomCunit$, the action being given by $\rho\lmt\rho\circ\si_*^{-1}$.
\end{exercise}

\begin{exercise}
Let $p:B\lra C$ be a surjective group homomorphism and denote by \begin{equation}\label{ses} 1 \lra A \lra B \lra C \lra 1\end{equation} the associated short exact sequence of groups.\\
\textbf{1.} Let $s:C\lra B$ be a group homomorphism satisfying $p\circ s=\Id_C$ (such a map is called a \textit{splitting} of the short exact sequence \eqref{ses}) and consider the associated semi-direct product $A\rtimes_s C$, where $$(a,c)(a',c') = (as(c)a's(c)^{-1},cc').$$ Show that the map $$\phi_s:\begin{array}{ccc} A \rtimes_s C & \lra & B \\ (a,c) & \lmt & a\, s(c) \end{array}$$ induces an isomorphism of extensions of $C$ by $A$~:
$$\begin{CD}
A @>>> A\rtimes_s C @>>> C \\
@| @VV{\phi_s}V @| \\
A @>>> B @>>> C. 
\end{CD}
$$
\textbf{2.} Let $u\in A$. Show that $s':=\Ad{u}\circ s$ is another splitting of \eqref{ses} and that $$A\rtimes_{\Ad{u}\circ s} C \simeq A\rtimes_s C$$ as augmentations of $C$~:
$$\begin{CD}
A @>>> B @>>> C \\
@VV{\Ad{u}}V @VV{\Ad{u}}V @| \\
A @>>> B @>>> C.
\end{CD}
$$
\end{exercise}

\end{multicols}


\end{document}